\documentclass[11pt]{amsart}

\usepackage[utf8]{inputenc}
\usepackage[T1]{fontenc}
\usepackage{pgf,tikz}
\usetikzlibrary{arrows}
\usetikzlibrary{patterns}
\usetikzlibrary{intersections}
\usetikzlibrary{calc}
\usetikzlibrary{decorations.markings}
\pagestyle{plain}
\usepackage{enumerate}
\usepackage{float}
\usepackage{algorithm}
\usepackage{algpseudocode}
\usepackage{a4wide}
\usepackage{hyperref}
\usepackage[
    msc-links
]{amsrefs}
\BibSpec{software}{%
    +{}  {\PrintAuthors} {author}
    +{,} { \textit} {title}
    +{,} { \PrintDateB} {date}
    +{,} { \PrintDOI} {doi}
    +{,} { available at \url} {url}
}

\newtheorem{thm}{Theorem}[section]
\newtheorem{lem}[thm]{Lemma}
\newtheorem{cor}[thm]{Corollary}
\newtheorem{conj}[thm]{Conjecture}

\theoremstyle{definition}

\title{Topological inductive constructions for tight surface graphs} 

\author{James Cruickshank}
\address{School of Mathematics, Statistics and Applied Mathematics, National University of Ireland Galway, Ireland.}
\email{james.cruickshank@nuigalway.ie}
\author{Derek Kitson}
\address{Department of Mathematics and Computer Studies, Mary Immaculate College, Thurles, Ireland.}
\email{derek.kitson@mic.ul.ie}
\author{Stephen C. Power}
\address{Department of Mathematics and Statistics, Lancaster University,  Lancaster LA1 4YF, U.K.}
\email{s.power@lancaster.ac.uk}
\thanks{The second and third authors were supported by the Engineering and Physical Sciences Research Council [grant number EP/P01108X/1].}
\author{Qays Shakir}
\address{Middle Technical University, Baghdad, Iraq}
\email{qays.shakir@gmail.com}
\thanks{The fourth author gratefully acknowledges the financial support from the Iraqi Ministry of Higher Education and Scientific Research and Middle Technical University, Baghdad.}

\keywords{graph, surface, torus graph, rotation system, sparse graph, tight graph, vertex splitting, inductive construction, contact graph, contacts of circular arcs}

\subjclass[2010]{05C10, 05C62, 52C30}

\tikzset{
    vertex/.style = {inner sep = 0mm,circle,draw,minimum size=5pt,fill=black},
    edge/.style = {line width = 1pt}, 
    bounding box/.style = {dash pattern = on 3pt off 3pt}
}

\newcommand{\intt}{\mathrm{int}}
\newcommand{\extt}{\mathrm{ext}}
\newcommand{\cyc}{C}

\begin{document}

\begin{abstract}
    We investigate properties of sparse and tight surface graphs.
    In particular 
    we derive topological inductive constructions   for \( (2,2) \)-tight
    surface graphs in the case of the sphere, the plane, the twice punctured sphere and the torus. 
    In the case of the torus we identify all 
    116 irreducible base graphs 
    and provide a geometric application involving contact graphs of
    configurations of circular arcs.
\end{abstract}

\maketitle

\section{Introduction}

Inductive characterisations of various families of graphs play an 
important role in many parts of graph theory. 
A graph $G = (V,E)$ is said to be $(2,2)$-sparse if for any 
nonempty $V' \subset V$, we have $|E(V')| \leq 2|V'|-2$. If, in addition, 
$|E| = 2|V|-2$ we say that $G$ is $(2,2)$-tight. Such graphs
arise naturally in various parts of geometric graph 
theory, including framework rigidity, circle packings, and also in 
graph drawing. 

We will derive inductive characterisations of $(2,2)$-tight graphs
that are embedded without edge crossings 
in certain orientable surfaces of genus at most 1. Our characterisations
will be based on edge contractions and are in the spirit of 
well known results of Barnette, Nakamoto and others (\cites{MR1021367,MR1348564,MR1399674})
on irreducible triangulations and quadrangulations
of various surfaces. It may be worth noting, for example, that 
a graph is a quadrangulation of the plane if and only if it is 
$(2,4)$-tight. There are similar characterisations of quadrangulations
for various other surfaces. Thus it is clear that our results are 
related to, but distinct from, existing results on quadrangulations. 

Since our graphs are embedded, we consider inductive characterisations
based on topological edge contractions - that is to say that the 
contraction preserves the embedding of the graph (precise definitions
given below). This is a key point, since the well-known 
inductive characterisation of simple $(2,2)$-tight graphs by Nixon, Owen and Power 
is purely graph theoretic (\cite{MR3022162}). To further illustrate the 
significance of this we give an application of our main result to 
a recognition problem in graph drawing. The topological nature of the 
inductive characterisation is crucial in this context. 

Finally we note that there are similar topological 
inductive characterisations
of Laman graphs in the literature already 
(see Fekete et al. and Haas et al.), which have interesting 
geometric applications to pseudotriangulations and auxetic structures.

\subsection{Summary of main results}

Section \ref{sec_background} and the first part of Section \ref{sec_sparsity}
are background material for the rest of the paper. The main contributions of the paper 
are as follows

\begin{itemize}
    \item Theorem \ref{thm_hole_filling} presents an elementary but very useful principle 
        concerning sparsity counts and graphs embedded in surfaces. 
        While related results and special cases already exist in the literature, 
        our statement and proof emphasises that this is a general principle that applies to 
        wide range of sparsity counts and to surfaces of all genus. 
    \item In Section \ref{sec_inductive} we analyse the quadrilateral contraction 
        move. This operation is well known 
        in the context of quadrangulations. Here we examine its properties with respect to 
        $(2,2)$-sparsity and prove some structural results about non contractible quadrilaterals
        in this context.
    \item 
        Theorem \ref{thm_tight_sub_irred} 
        shows that if $G$ is an irreducible $(2,2)$-tight surface graph, then any 
        $(2,2)$-tight subgraph is also irreducible. This holds for surfaces of arbitrary 
        genus.
    \item We give topological inductive characterisations of $(2,2)$-tight 
        graphs embedded in the sphere, plane, annulus (Theorem \ref{thm_ann_irred}) and the
        torus (Theorem \ref{thm_main_torus}). The first two of these are relatively standard, whereas the 
        latter two characterisations are new. In the case of Theorem \ref{thm_main_torus} we have 
        also identified the 116 irreducible $(2,2)$-tight torus graphs. We do not give an explicit 
        description of these graphs in the paper for reasons of space, but the reader is referred to 
        \cite{irreducibles} for details.
    \item Finally we present an application of our results to a recognition problem in geometric graph theory.  Specifically we show that every $(2,2)$-tight torus graph can be realised as the contact graph of a collection of nonoverlapping circular arcs in the flat torus. We note that by passing to the universal cover this result may also be interpreted as a recognition result for contact graphs of doubly periodic collections of circular arcs in the plane.
\end{itemize}
We also conjecture a generalisation of our main results (Conjecture \ref{conj_finite}) to the set of irreducible $(2,2)$-tight surface graphs for surfaces of arbitrary genus. This would be analogous to results of Barnette, Nakamoto and others on triangulations and quadrangulations of surfaces.  As noted above and at appropriate points in the paper, many of our results are valid for a range of sparsity counts and for surfaces of arbitrary genus and these will be useful in future investigations of this conjecture.

\section{Graphs, surfaces and  embeddings}
\label{sec_background}

In this section we fix our conventions and terminology regarding 
topological graphs. Throughout, we use the word graph for a finite undirected 
multigraph. So loop and parallel edges are allowed a priori, although 
loop edges will not arise in most of the cases of interest.
If $\Gamma$ is a graph then $|\Gamma|$ is its geometric realisation. 

Suppose that $\Sigma$ is a compact real 2-dimensional manifold without
boundary. A $\Sigma$-graph, or surface graph, is a pair $(\Gamma,\varphi)$ where $\Gamma$ is a graph
and $\varphi: |\Gamma| \rightarrow \Sigma$ is a continuous embedding of the 
geometric realisation of $\Gamma$ in $\Sigma$. Given $\Sigma_i$-graphs $(\Gamma_i, \varphi_i)$ for $ i =1,2$ we say that they are isomorphic if there is a 
homeomorphism $h:\Sigma_1 \rightarrow \Sigma_2$ and a graph 
isomorphism $g: \Gamma_1 \rightarrow \Gamma_2$ such that $h\circ \varphi_1 = \varphi_2 \circ |g|$, where $|g|$ is the induced homeomorphism $|\Gamma_1| \rightarrow |\Gamma_2|$. By the Heffter-Edmonds-Ringel rotation principle, the surface
$\Sigma$ and the $\Sigma$-graph $(\Gamma,\varphi)$ are determined up to 
isomorphism by data consisting of a rotation system on $\Gamma$, a partition of the 
set facial walks associated to the rotation system and for each part of the partition 
a nonnegative integer that represents the genus of the corresponding facial region. Note that we do not assume that 
our surface graphs are cellular. See 
\cite{MR1844449} for details of this. We will be interested in determining certain 
classes of surface graphs up to isomorphism. The above-mentioned principle allows us to 
argue topologically using properties of surfaces and curves in surfaces to deduce combinatorial 
information and we choose to write our arguments using topological terminology based on this. 

Now we clarify the meaning of some standard terms which may have ambiguous interpretations in 
this topological context. Let $G=(\Gamma, \varphi)$ be a $\Sigma$-graph and let $e \in E(\Gamma)$. By $\Gamma/e$ we mean the graph obtained by identifying the end vertices of $e$ and deleting 
(only) the edge $e$. Thus edge contractions can create parallel edges and/or loops. 
By $G/e$ we mean the surface graph obtained by collapsing the arc corresponding to $e$ to a 
single point. Clearly the underlying graph of $G/e$ is $\Gamma/e$. A face, $F$,
of $G$ is a connected component of $\Sigma - \varphi(|\Gamma|)$. As is well known there 
is a well defined collection of closed boundary walks associated to $F$. We say that 
$F$ is non degenerate if no vertex occurs more than once in this collection of walks. 
A cellular face is one that is homeomorphic to $\mathbb R^2$. 
For a cellular face $F$, the degree of $F$, denoted $|F|$, 
is the edge length of its unique boundary 
walk (which of course 
may differ from the number of vertices or edges in degenerate cases). 
We write $f_i$ for the number of cellular faces of degree $i$. Note that if $\Sigma$ is 
connected then $f_0 =1$ if $\Sigma$ is a sphere and $\Gamma$ comprises a single vertex, and 
$f_0 = 0$ otherwise. 

Finally we note that we extend much of the standard language of graph theory concerning 
subgraphs, intersections and unions to surface graphs, understanding that these terms
apply to the underlying graphs. Thus if $G = (\Gamma,\varphi)$ is a $\Sigma$-graph, a 
$\Sigma$-subgraph of $G$ is a pair $(\Gamma',\varphi|_{\Gamma'})$ where 
$\Gamma'$ is a subgraph of $\Gamma$ and $\varphi|_{\Gamma'}$ is the restriction 
of $\varphi$ to $|\Gamma'|$. If $H_1,H_2$ are $\Sigma$-subgraphs of $G$ 
then $H_1 \cup H_2$, respectively $H_1\cap H_2$, is the $\Sigma$-graph whose underlying 
graph is the union, respectively intersection, of the underlying graphs of $H_1$ and $H_2$.

\section{Sparsity}
\label{sec_sparsity}

For a graph \( \Gamma = (V,E,s,t) \) as above, define 
\( \gamma(\Gamma) = 2|V| - |E| \). 
For \( l \leq 2\) we say that \( \Gamma \) is \( (2,l) \)-sparse (or just 
sparse if \( l \) is clear from the context) if, \( \gamma(\Gamma')
\geq l\) for every nonempty
subgraph \( \Gamma' \) of \( \Gamma \). We say that \( \Gamma \) is 
\( (2,l) \)-tight if it is \( (2,l) \)-sparse and \( \gamma(\Gamma)
=l\). We will be particularly interested in \( (2,2) \)-sparse graphs.
Note that \( (2,2) \)-tight graphs cannot have loop edges but can have 
parallel edges (but not triples of parallel edges).

We record some standard elementary facts for later use. The proofs are straightforward
and we omit them.
Suppose that \( \Gamma_1,\Gamma_2 \) are subgraphs of \( \Gamma \). 
Then
\begin{equation}
    \gamma(\Gamma_1 \cup \Gamma_2) = \gamma(\Gamma_1)+\gamma(\Gamma_2)
    -\gamma(\Gamma_1 \cap \Gamma_2)
    \label{lem_incl_excl}
\end{equation}

\begin{lem}
    \label{lem_connected}
    Suppose that \( \Gamma \) is \( (2,2) \)-sparse and that 
    \( \gamma(\Gamma') \leq 3 \) for some subgraph \( \Gamma' \)
    of \( \Gamma \). Then 
    \( \Gamma' \) is connected.\qed
\end{lem}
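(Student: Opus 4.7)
The plan is to argue by contrapositive: assume $\Gamma'$ is disconnected and show that $\gamma(\Gamma') \geq 4$, which contradicts the hypothesis $\gamma(\Gamma') \leq 3$. This approach reduces the whole statement to a one-line application of the inclusion-exclusion identity \eqref{lem_incl_excl}.

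If $\Gamma'$ is disconnected then we can partition its vertex set into two nonempty sets with no edges between them, producing subgraphs $\Gamma_1, \Gamma_2$ of $\Gamma$ with $\Gamma_1 \cup \Gamma_2 = \Gamma'$ and $V(\Gamma_1) \cap V(\Gamma_2) = \emptyset$. In particular $\Gamma_1 \cap \Gamma_2$ is the empty graph, so $\gamma(\Gamma_1 \cap \Gamma_2) = 0$. Substituting into \eqref{lem_incl_excl} yields $\gamma(\Gamma') = \gamma(\Gamma_1) + \gamma(\Gamma_2)$.

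Since $\Gamma_1$ and $\Gamma_2$ are nonempty subgraphs of the $(2,2)$-sparse graph $\Gamma$, each satisfies $\gamma(\Gamma_i) \geq 2$. Therefore $\gamma(\Gamma') \geq 4$, contradicting $\gamma(\Gamma') \leq 3$ and establishing the claim.

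There is no real obstacle here; the only minor care needed is the convention $\gamma(\emptyset) = 0$ (which is immediate from $\gamma(\Gamma) = 2|V|-|E|$) so that \eqref{lem_incl_excl} may be applied even though the sparsity hypothesis itself is stated only for nonempty subgraphs.
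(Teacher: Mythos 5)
Your proof is correct, and it is exactly the standard argument the paper has in mind when it omits the proof as straightforward: split a disconnected $\Gamma'$ into two nonempty vertex-disjoint pieces, apply the additivity of $\gamma$ from \eqref{lem_incl_excl}, and use sparsity to get $\gamma(\Gamma') \geq 4$. Nothing further is needed.
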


\begin{lem}
    \label{lem_int_unions}
    Suppose that \( \Gamma_1,\Gamma_2 \) are \( (2,2) \)-tight 
    subgraphs of a \( (2,2) \)-sparse
    graph \( \Gamma \). If  \( \Gamma_1 \cap \Gamma_2 \) is not empty 
    then both \( \Gamma_1 \cup \Gamma_2 \) and \( \Gamma_1 \cap \Gamma_2 \)
    are \( (2,2) \)-tight.\qed
\end{lem}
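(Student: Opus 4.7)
The plan is to apply the inclusion-exclusion identity \eqref{lem_incl_excl} directly, exploiting the fact that $(2,2)$-sparsity forces lower bounds on $\gamma$ for every nonempty subgraph. The whole argument is a ``squeeze'' between upper and lower bounds on $\gamma(\Gamma_1 \cup \Gamma_2)$ and $\gamma(\Gamma_1 \cap \Gamma_2)$.

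First I would observe that since $\Gamma_1$ and $\Gamma_2$ are $(2,2)$-tight, $\gamma(\Gamma_1) = \gamma(\Gamma_2) = 2$, so identity \eqref{lem_incl_excl} gives
\begin{equation*}
    \gamma(\Gamma_1 \cup \Gamma_2) + \gamma(\Gamma_1 \cap \Gamma_2) = 4.
\end{equation*}
Next I would use the hypothesis that $\Gamma_1 \cap \Gamma_2$ is nonempty, which implies $\Gamma_1 \cup \Gamma_2$ is also nonempty; both are subgraphs of the $(2,2)$-sparse graph $\Gamma$, so each satisfies $\gamma \geq 2$. Plugging these two lower bounds into the displayed equation forces equality $\gamma(\Gamma_1 \cup \Gamma_2) = \gamma(\Gamma_1 \cap \Gamma_2) = 2$, which is precisely $(2,2)$-tightness of both graphs (since sparsity is inherited from $\Gamma$).

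The main thing to be careful about is the meaning of ``nonempty'' for the intersection: it should mean that $\Gamma_1 \cap \Gamma_2$ contains at least one vertex, since the sparsity count $\gamma$ is only defined on nonempty subgraphs and the lower bound $\gamma \geq 2$ in the definition of sparsity applies to any such subgraph (even an edgeless one on a single vertex, which has $\gamma = 2$). With that convention in place there is no real obstacle; the lemma is a direct one-line consequence of submodularity plus the sparsity lower bound.
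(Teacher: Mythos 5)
Your proof is correct and is exactly the standard squeeze via the inclusion–exclusion identity \eqref{lem_incl_excl} together with the sparsity lower bound; the paper omits the proof as ``straightforward'', and this is clearly the intended argument. Your remark that the nonempty intersection may consist of a single vertex (with $\gamma = 2$) is the right convention and causes no difficulty.
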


We also record a straightforward consequence of Euler's polyhedral 
formula.
\begin{thm}
    \label{thm_euler_sparsity}
    If \( \Sigma \) is a connected boundaryless compact orientable 
    surface of genus \( g \) and \( G \) is a 
    cellular \( \Sigma \)-graph then 
    \begin{equation}
        \sum_{i\geq 0} (4-i)f_i = 8-8g - 2\gamma(G)
    \end{equation}
\end{thm}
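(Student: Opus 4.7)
The plan is a direct computation from Euler's polyhedral formula, separating the face contributions by degree. Since $G$ is cellular, every face is a cellular face, so $|F| = \sum_{i\geq 0} f_i$. Moreover, by the standard face-edge incidence count (each edge appears exactly twice among the boundary walks of the faces, counted with multiplicity to account for the degenerate cases that make the degree of a face differ from its number of distinct boundary edges), we have $\sum_{i\geq 0} i\,f_i = 2|E|$.

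I would then compute
\begin{equation*}
\sum_{i\geq 0}(4-i)f_i \;=\; 4\sum_{i\geq 0} f_i \;-\; \sum_{i\geq 0} i\,f_i \;=\; 4|F|-2|E|.
\end{equation*}
Euler's formula for $\Sigma$ of genus $g$ gives $|V|-|E|+|F| = 2-2g$, so $|F| = 2-2g - |V|+|E|$, and substituting yields
\begin{equation*}
4|F|-2|E| \;=\; 8-8g-4|V|+2|E| \;=\; 8-8g-2(2|V|-|E|) \;=\; 8-8g-2\gamma(G),
\end{equation*}
which is exactly the claimed identity.

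The only subtlety, and therefore the step that should be justified carefully, is the face-edge incidence identity $\sum i f_i = 2|E|$. For a cellular embedding this is standard, but because the paper explicitly warns that the degree $|F|$ of a cellular face is the length of its boundary walk (and may disagree with the number of distinct edges on the boundary in degenerate situations), one should note that the identity holds precisely with this walk-length convention: an edge whose two sides lie on the same face contributes $2$ to the degree of that face, while an edge separating two distinct faces contributes $1$ to each. Beyond this bookkeeping remark, the proof is a one-line rearrangement of Euler's formula.
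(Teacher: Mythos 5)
Your proof is correct and is exactly the argument the paper has in mind: its one-line proof invokes Euler's polyhedral formula together with the identity $\sum i f_i = 2|E|$, which is precisely your computation, and your remark on the walk-length convention is the right justification of that identity.
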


\begin{proof}
    Use the polyhedral formula and the fact that \( \sum if_i = 2|E| \).
\end{proof}

Next we will derive an elementary but subsequently very useful principle relating 
the function $\gamma$, which is defined purely in terms of the underlying graph,
to the embedding of the graph in the surface. 
We note that related results and 
special cases of this have appeared elsewhere (notably 
\cites{MR2165974,MR3575219,CKP2}). Here we attempt 
to express this principle in a more general terms: for surfaces of 
arbitrary genus and for a variety of sparsity counts.

In order to state the result we first need some notation. Let 
$H$ be a subgraph of the $\Sigma$-graph $G$ and suppose that 
$F$ is a face of $H$. Let $\intt_G(F)$ be the subgraph of $G$
consisting of all vertices and edges of $G$ that lie inside $\overline F$
(the topological closure of $F$ in $\Sigma$). Let $\extt_G(F)$ be the 
subgraph of $G$ consisting of all vertices and edge of $G$ that lie in 
$\Sigma - F$. 
Define $\partial F = \intt_G(F) \cap \extt_G(F)$ and note that 
$\partial F$ is the smallest subgraph of $G$ that supports the boundary walks of 
$F$. It follows that $G = \intt_G(F) \cup \extt_G(F)$ since any 
edge joining $\intt_G(F)$ to $\extt_G(F)$ must pass through $\partial F$.

\begin{thm}
    \label{thm_hole_filling}
    Suppose that \( l \leq 2 \) and that 
    \( G \) is a \( (2,l) \)-tight \( \Sigma \)-graph.
    If \( H \) is a subgraph of \( G \) and 
    \( F \) is a face of \( H \), then 
    \( \gamma(H \cup \intt_G(F)) \leq \gamma(H)\).
\end{thm}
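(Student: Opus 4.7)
The plan is to prove this via two applications of the inclusion–exclusion identity \eqref{lem_incl_excl}, combined with the sparsity of $G$.

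First I would identify $H \cap \intt_G(F)$. Since $F$ is a face of $H$, no vertex or edge of $H$ lies in the open face $F$, so every vertex or edge of $H$ that is in $\overline{F}$ must lie on a boundary walk of $F$; conversely every element of $\partial F$ is in $H$ and in $\overline F$. Thus $H \cap \intt_G(F) = \partial F$. Applying \eqref{lem_incl_excl} gives
\begin{equation*}
    \gamma(H \cup \intt_G(F)) = \gamma(H) + \gamma(\intt_G(F)) - \gamma(\partial F),
\end{equation*}
so the theorem is equivalent to the inequality $\gamma(\intt_G(F)) \leq \gamma(\partial F)$.

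Next I would establish this inequality by applying inclusion–exclusion to the decomposition $G = \intt_G(F) \cup \extt_G(F)$ noted just before the statement. Since $\partial F = \intt_G(F) \cap \extt_G(F)$, we get
\begin{equation*}
    \gamma(G) = \gamma(\intt_G(F)) + \gamma(\extt_G(F)) - \gamma(\partial F).
\end{equation*}
Now $G$ is $(2,l)$-tight, so $\gamma(G) = l$, and $\extt_G(F)$ is a subgraph of the $(2,l)$-sparse graph $G$, so $\gamma(\extt_G(F)) \geq l$. Rearranging yields exactly $\gamma(\intt_G(F)) \leq \gamma(\partial F)$, and substituting back completes the proof.

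The main thing to be careful about is the handling of empty subgraphs so that the inclusion–exclusion identity applies. One should observe that if $\intt_G(F)$ is empty, then $H \cup \intt_G(F) = H$ and the result is trivial, while if $H$ is nonempty and $\intt_G(F)$ is nonempty then $\partial F$ is automatically nonempty (it carries the boundary walk of $F$) and $\extt_G(F)$ contains $H$ and hence is also nonempty, so the lower bound $\gamma(\extt_G(F)) \geq l$ from sparsity is available. I do not expect any genuine obstacle beyond these bookkeeping points; the content of the theorem is essentially a two-step inclusion–exclusion argument that trades the global tightness of $G$ for a local bound on filling in a face.
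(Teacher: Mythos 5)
Your proposal is correct and follows essentially the same argument as the paper: two applications of the inclusion–exclusion identity, identifying $H\cap\intt_G(F)$ with $\partial F=\intt_G(F)\cap\extt_G(F)$, and then trading $\gamma(G)=l$ against the sparsity bound $\gamma(\extt_G(F))\geq l$. The only difference is cosmetic — you phrase the key step as $\gamma(\intt_G(F))\leq\gamma(\partial F)$ while the paper writes it as $\gamma(H\cap\intt_G(F))\geq\gamma(\intt_G(F))$ — and your remarks on the degenerate/empty cases are sound.
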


\begin{proof}
    By (\ref{lem_incl_excl}) we have 
    \[ \gamma(H \cup \intt_G(F)) = \gamma(H) + \gamma(\intt_G(F)) -
    \gamma(H \cap \intt_G(F)).\]
    Now, \( H\cap \intt_G(F) = \extt_G(F) \cap \intt_G(F) \) and using 
    (\ref{lem_incl_excl}) again, we see that 
    \begin{eqnarray*}
        \gamma(H \cap \intt_G(F)) &= & \gamma(\extt_G(F) \cap \intt_G(F)) \\
        &= & \gamma(\intt_G(F)) + \gamma(\extt_G(F)) -   \gamma(\extt_G(F) \cup \intt_G(F)) \\
        &= & \gamma(\intt_G(F)) + \gamma(\extt_G(F)) -   \gamma(G) \\
        &= & \gamma(\intt_G(F)) + \gamma(\extt_G(F)) -   l \\
        & \geq & \gamma(\intt_G(F)). \\
    \end{eqnarray*}
    The last inequality above follows from applying the sparsity 
    of \( G \) to the nonempty subgraph \( \extt_G(F) \).
\end{proof}

\begin{cor}
    \label{cor_hole}
    Suppose that \( l \leq 2 \) and that 
    \( G \) is a \( (2,l) \)-tight \( \Sigma \)-graph.
    If \( H \) is a subgraph of \( G \) and 
    \( F \) is a face of \( H \), then 
    \( \gamma(\extt_G(F)) \leq \gamma(H)\).
\end{cor}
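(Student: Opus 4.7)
The plan is to mimic almost verbatim the calculation in the proof of Theorem \ref{thm_hole_filling}, but at the final step invoke sparsity of $G$ at a different subgraph, namely $H \cup \intt_G(F)$ rather than $\extt_G(F)$. The key identity that powered the theorem — that $\gamma$ of the boundary piece $\partial F$ can be expressed as $\gamma(\intt_G(F)) + \gamma(\extt_G(F)) - l$ — is symmetric in $\intt_G(F)$ and $\extt_G(F)$, so it also relates $\gamma(H)$ and $\gamma(\extt_G(F))$ through a complementary inequality.

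First I would verify the identification $H \cap \intt_G(F) = \partial F$. Because $F$ is a face of $H$, no edge or vertex of $H$ lies in the open region $F$, so $H \subseteq \extt_G(F)$, giving $H \cap \intt_G(F) \subseteq \extt_G(F) \cap \intt_G(F) = \partial F$. Conversely, $\partial F$ supports the boundary walks of the face $F$ of $H$, hence $\partial F \subseteq H$, and of course $\partial F \subseteq \intt_G(F)$, so $\partial F \subseteq H \cap \intt_G(F)$.

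Next I apply inclusion-exclusion (equation \ref{lem_incl_excl}) twice. From $G = \intt_G(F) \cup \extt_G(F)$ and tightness $\gamma(G) = l$, as in the proof of Theorem \ref{thm_hole_filling},
\[ \gamma(\partial F) = \gamma(\intt_G(F)) + \gamma(\extt_G(F)) - l. \]
From the union $H \cup \intt_G(F)$, together with the identification above,
\[ \gamma(H \cup \intt_G(F)) = \gamma(H) + \gamma(\intt_G(F)) - \gamma(\partial F) = \gamma(H) - \gamma(\extt_G(F)) + l. \]

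Finally, $H \cup \intt_G(F)$ is a nonempty subgraph of the $(2,l)$-sparse graph $G$, so $\gamma(H \cup \intt_G(F)) \geq l$, and rearranging gives $\gamma(\extt_G(F)) \leq \gamma(H)$. There is no real obstacle here: the only thing to check carefully is the set-theoretic identity $H \cap \intt_G(F) = \partial F$, and once that is established the arithmetic is forced. One may notice that the corollary could equivalently be stated as saying that the roles of $H$ and $\extt_G(F)$ are interchangeable in Theorem \ref{thm_hole_filling}, reflecting the inside/outside symmetry of the situation.
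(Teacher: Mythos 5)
Your proof is correct, but it follows a different route from the paper's. The paper proves the corollary as a quick consequence of Theorem \ref{thm_hole_filling} used as a black box: writing \( J_1,\dots,J_k \) for the faces of \( H \) other than \( F \), it observes that \( \extt_G(F) = H \cup \bigcup_{i=1}^k \intt_G(J_i) \) and fills these holes one at a time, each application of the theorem not increasing \( \gamma \). You instead redo the inclusion–exclusion computation from the theorem's proof directly: after checking \( H \cap \intt_G(F) = \partial F \) (which indeed follows from \( H \subseteq \extt_G(F) \) and \( \partial F \subseteq H \), the latter because \( \partial F \) supports the boundary walks of the face \( F \) of \( H \)), you combine \( \gamma(\partial F) = \gamma(\intt_G(F)) + \gamma(\extt_G(F)) - l \) with (\ref{lem_incl_excl}) applied to \( H \cup \intt_G(F) \) to get the exact identity \( \gamma(H \cup \intt_G(F)) = \gamma(H) - \gamma(\extt_G(F)) + l \), and then apply sparsity to \( H \cup \intt_G(F) \) rather than to \( \extt_G(F) \). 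This buys a one-step argument that never mentions the other faces of \( H \), and the identity makes the inside/outside symmetry explicit: the corollary and the theorem are equivalent statements, each amounting to sparsity of \( G \) applied to the complementary subgraph. The paper's route is shorter to write given the theorem, and its face-by-face decomposition is the formulation reused later (e.g.\ in Lemma \ref{lem_maximal_blocker}), but both arguments rest on the same groundwork, namely \( G = \intt_G(F) \cup \extt_G(F) \) and the identification of the overlap with \( \partial F \).
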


\begin{proof}
    Let \( J_1,\cdots,J_k \) be all the faces of \( H \) that 
    are different from \( F \). Then \( \extt_G(F) 
    = H \cup \bigcup_{i=1}^k \intt_G(J_i)\). Now the
    conclusion follows from repeated applications of 
    Theorem \ref{thm_hole_filling}.
\end{proof}

We remark that all of the results of this section admit straightforward 
adaptations to the function \( \gamma_k\) which maps 
\(H \mapsto k|V(H)| - |E(H)| \), for any positive integer 
\( k \).

We conclude this section by making some straightforward observations
about the subgraphs $\intt_G(F)$ and $\extt_F(G)$ that will be useful in the 
sequel.
Any face of \( \intt_G(F) \) that is contained in 
\( F \) is also a face of \( G \). 
On the other hand, there are one or more faces of \( \intt_G(F) \)
which are contained in \( \Sigma - \overline F \). We call such a
face an external face of \( \intt_G(F) \).
Such an external face need not be a face of \( G \).
Note that if
\( F \) has a unique boundary walk that is a simple cycle, then 
\( \intt_G(F) \) has just one external face. In general it may have
more than one external face.
Observe that \( \extt_G(F) \) has one exceptional face, namely \( F \),
such that all other faces of \( \extt_G(F) \)
are also faces of \( G \).

\section{Inductive operations on surface graphs}
\label{sec_inductive}

In this section we will focus on topological 
inductive operations on graphs that are natural in the context 
of \( (2,l) \)-tight graphs. We consider three types contractions
associated to cellular faces of degree $2$, $3$ and $4$, respectively 
called digons, triangles and quadrilaterals hereafter. In each case 
the contraction decreases the number of vertices by one and the number of 
edges by two. We investigate
necessary and sufficient conditions for these moves to preserve the property of 
being $(2,2)$-tight. We pay particular attention to degenerate cases as these play 
an important role later. 

\subsection{Digon and triangle contractions}
\label{subsec_digon_contr}
Let \( G \) be a \( \Sigma \)-graph and
suppose that \( D \) is a digon of \( G \) with boundary walk 
\( v_1,e_1,v_2,e_2,v_1 \) such that  \( v_1 \neq v_2 \) 
and \( e_1 \neq e_2 \). 
Let \( G_D = (G/e_1) - e_2 \). 
Observe that \( (G/e_1)-e_2 \) is canonically isomorphic to 
\( (G/e_2)-e_1 \), so \( G_D \) depends only on the digon and 
not the particular choice of labelling of the edges.  
We remark that, for a connected surface \( \Sigma \),  while a digon in a 
\( (2,2) \)-sparse \( \Sigma \)-graph necessarily has distinct vertices, 
it may 
have degenerate boundary, but only in the case that the graph is a single 
(non loop) edge and \( \Sigma \) is a sphere. 

The proof of the following is straightforward and we omit it.

\begin{lem}
    \label{lem_digon_contractible}
    \( G \) is \( (2,l) \)-sparse if and only if \( G_D \) is 
    \( (2,l) \)-sparse
\end{lem}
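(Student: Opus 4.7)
The plan is to relate the $\gamma$-values of subgraphs of $G$ to those of subgraphs of $G_D$ via simple vertex/edge count bookkeeping and a case analysis on how a given subgraph interacts with the digon $D$.

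First I would note that $|V(G_D)| = |V(G)|-1$ and $|E(G_D)| = |E(G)|-2$, so in particular $\gamma(G) = \gamma(G_D)$. Write $v$ for the single vertex of $G_D$ obtained from identifying $v_1$ and $v_2$. The edges of $G_D$ are in natural bijection with $E(G) \setminus \{e_1,e_2\}$, and each edge of $G_D$ incident to $v$ corresponds to an edge of $G$ incident to exactly one of $v_1,v_2$ (and vice versa).

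For the forward direction ($G$ sparse implies $G_D$ sparse), given a nonempty subgraph $H'$ of $G_D$ I would lift it to a subgraph $H$ of $G$ as follows. If $v \notin V(H')$, take $H = H'$ (viewed inside $G$). If $v \in V(H')$, let $H$ consist of: every vertex and edge of $H'$ distinct from $v$, the two vertices $v_1,v_2$, the two edges $e_1,e_2$, and each edge of $H'$ at $v$ attached to whichever of $v_1,v_2$ it was originally attached to in $G$. In both cases, a direct count gives $|V(H)| - |V(H')| = |E(H)|/2 - |E(H')|/2$ in the appropriate sense, and more precisely $\gamma(H) = \gamma(H')$. Sparsity of $G$ applied to the nonempty subgraph $H$ then yields $\gamma(H') = \gamma(H) \geq l$.

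For the reverse direction ($G_D$ sparse implies $G$ sparse), given a nonempty subgraph $H$ of $G$ I would project it to a subgraph $H'$ of $G_D$ by dropping $e_1,e_2$ from the edge set (if present) and, whenever both $v_1,v_2$ appear in $H$, identifying them to $v$. A short case analysis on whether $\{v_1,v_2\} \cap V(H)$ has size $0$, $1$, or $2$ and on which of $e_1,e_2$ lie in $E(H)$ shows that $\gamma(H') \leq \gamma(H)$ in every case: when $|\{v_1,v_2\} \cap V(H)| \leq 1$ we have $e_1,e_2 \notin E(H)$ and $\gamma(H') = \gamma(H)$; when both $v_1,v_2 \in V(H)$ the vertex count drops by one while the edge count drops by $|\{e_1,e_2\} \cap E(H)| \in \{0,1,2\}$, giving $\gamma(H') = \gamma(H) - 2 + |\{e_1,e_2\} \cap E(H)| \leq \gamma(H)$. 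Since $H'$ is nonempty whenever $H$ is, sparsity of $G_D$ yields $\gamma(H) \geq \gamma(H') \geq l$.

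The only real obstacle is nothing more than being careful in the case analysis, particularly checking that the lift in the forward direction is well defined (which uses the bijection between edges of $G_D$ at $v$ and edges of $G$ at $v_1,v_2$ other than $e_1,e_2$) and that the projected $H'$ in the reverse direction is nonempty. No topology beyond the fact that $v_1 \neq v_2$ and $e_1 \neq e_2$ is needed, so the result holds for any sparsity parameter $l$.
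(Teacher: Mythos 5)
Your proof is correct, and it is exactly the straightforward subgraph-lifting/projection bookkeeping that the paper has in mind when it omits the proof of this lemma as routine. One cosmetic remark: for $l\leq 1$ an edge of $G_D$ at $v$ may also arise from a third parallel $v_1v_2$ edge of $G$ (becoming a loop at $v$), so the stated bijection with edges meeting exactly one of $v_1,v_2$ is slightly imprecise, but this does not affect either direction of your counting argument.
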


Now suppose that \( T \) is a triangle in \( G \) with 
boundary walk \( v_1,e_1,v_2,e_2,v_3,e_3,v_1 \)
such that \( v_1 \neq v_2 \) and \( e_1 \neq e_2 \). Let 
\( G_{T,e_1} = (G/e_1) - e_2 \). 
Again we omit the proof of the following lemma as it is a straightforward
consequence of the definitions.

\begin{lem}
    \label{lem_triangle_blocker}
    Suppose that \( G \) is \( (2,l) \)-sparse and that 
    \( G_{T,e_1} \) is not \( (2,l) \)-sparse. Then 
    there is a subgraph \( H \) of \( G \) that contains
    \( e_1 \) but not \( v_3 \) such that \( \gamma(H) = l \)
\end{lem}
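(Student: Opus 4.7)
The plan is to take a subgraph $H'$ of $G_{T,e_1}$ witnessing the failure of $(2,l)$-sparsity and lift it back to $G$, then exploit the edge $e_2$, which was discarded in passing to $G_{T,e_1}$, to force $v_3$ out of the witness.

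First I would let $v$ denote the vertex of $G/e_1$ obtained by identifying $v_1$ and $v_2$, and choose any subgraph $H' \subseteq G_{T,e_1}$ with $\gamma(H') \leq l-1$. Observe that $v \in V(H')$: if it were not, then $H'$ would be a subgraph of $G$ with $\gamma(H') < l$, contradicting the $(2,l)$-sparsity of $G$.

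Next I would define a lift $H \subseteq G$ by setting $V(H) = (V(H') \setminus \{v\}) \cup \{v_1, v_2\}$ and $E(H) = E(H') \cup \{e_1\}$, where each edge of $H'$ is regarded as an edge of $G$ with its original endpoints (so edges of $H'$ incident to $v$ are reattached to $v_1$ or $v_2$ according to their original incidence in $G$). A direct count gives $|V(H)| = |V(H')| + 1$ and $|E(H)| = |E(H')| + 1$, so $\gamma(H) = \gamma(H') + 1 \leq l$. Combined with the sparsity of $G$ applied to $H$, this forces $\gamma(H) = l$. By construction $e_1 \in E(H)$.

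Finally I would show $v_3 \notin V(H)$. Since $e_2$ was deleted in forming $G_{T,e_1}$, we have $e_2 \notin E(H)$. If $v_3$ were in $V(H)$, then, since $v_2 \in V(H)$, the subgraph $H \cup \{e_2\}$ of $G$ would satisfy $\gamma(H \cup \{e_2\}) = \gamma(H) - 1 = l-1$, contradicting the $(2,l)$-sparsity of $G$. Therefore $v_3 \notin V(H)$, and $H$ has all the required properties. There is no real obstacle in this argument; it amounts to careful bookkeeping, and the key conceptual point is that $e_2$ plays no role in the obstruction in $G_{T,e_1}$ but is available in $G$ to rule out $v_3$ from the lifted witness.
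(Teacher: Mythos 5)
Your proof is correct, and it is exactly the straightforward lift-the-blocker argument that the paper has in mind when it omits the proof as an immediate consequence of the definitions: pull a violating subgraph $H'$ of $G_{T,e_1}$ back to $H\subseteq G$ through the contracted vertex, note $\gamma(H)=\gamma(H')+1=l$, and use the deleted edge $e_2$ to exclude $v_3$. The bookkeeping (the contracted vertex must lie in $H'$, $e_2\notin E(H)$, and adding $e_2$ would drop $\gamma$ below $l$) is all handled correctly.
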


We refer to the graph \( H \) whose existence is asserted in 
Lemma \ref{lem_triangle_blocker} as a blocker for the 
contraction \( G_{T,e_1} \).

We note that a triangle in a 
\( (2,2) \)-sparse surface graph necessarily has a non degenerate boundary
walk, 
since any degeneracy would entail a (forbidden) loop edge.
Thus, in this case there 
are three possible contractions (one for each of the 
edges) associated to any such face.

\begin{lem}
    \label{lem_tri_contractible}
    Suppose that 
    \( G \) is a \( (2,2) \)-sparse \( \Sigma \)-graph and that 
    \( T \) is a triangle with edges \( e_1,e_2,e_3 \). Then 
    at least two of the \( \Sigma \)-graphs 
    \( G_{T,e_1},G_{T,e_2},G_{T,e_3} \) 
    are \( (2,2) \)-sparse.
\end{lem}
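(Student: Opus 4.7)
The plan is to argue by contradiction. Suppose at least two of the three contractions $G_{T,e_1},G_{T,e_2},G_{T,e_3}$ fail to be $(2,2)$-sparse; by the cyclic symmetry of the triangle I may assume these are $G_{T,e_1}$ and $G_{T,e_2}$. Applying Lemma \ref{lem_triangle_blocker} with $l=2$ yields two blockers: a subgraph $H_1$ of $G$ that contains $e_1$ but not $v_3$ with $\gamma(H_1)=2$, and a subgraph $H_2$ of $G$ that contains $e_2$ but not $v_1$ with $\gamma(H_2)=2$. Each $H_i$ is in particular $(2,2)$-tight.

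Next I want to combine these. The edge $e_1$ has endpoint $v_2$ and lies in $H_1$, so $v_2\in H_1$; similarly $v_2\in H_2$. Hence $H_1\cap H_2$ is nonempty, so by Lemma \ref{lem_int_unions} both $H_1\cup H_2$ and $H_1\cap H_2$ are $(2,2)$-tight. In particular $\gamma(H_1\cup H_2)=2$.

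The contradiction will come from the third edge $e_3=v_3v_1$. Since $v_3\notin H_1$ and $v_1\notin H_2$, the edge $e_3$ lies in neither $H_1$ nor $H_2$, so $e_3\notin H_1\cup H_2$. On the other hand $v_1\in H_1$ and $v_3\in H_2$, so both endpoints of $e_3$ belong to $H_1\cup H_2$. Adjoining $e_3$ therefore produces a subgraph with the same vertex set as $H_1\cup H_2$ but one more edge, which has $\gamma$-value $2-1=1<2$, contradicting the $(2,2)$-sparsity of $G$.

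The only real subtlety is confirming the precise containment data of the blockers used in the final step; everything else is a short inclusion-exclusion style calculation. I expect the step of verifying $e_3\notin H_1\cup H_2$ together with $v_1,v_3\in H_1\cup H_2$ to be the key conceptual point, since it is exactly here that the asymmetry between the two failed contractions produces the desired overfull subgraph.
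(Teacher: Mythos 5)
Your proposal is correct and follows essentially the same route as the paper: invoke the triangle blocker lemma for the two failed contractions, note $v_2$ lies in both blockers so Lemma \ref{lem_int_unions} makes $H_1\cup H_2$ tight, and then adjoin $e_3$ (whose endpoints lie in the union but which lies in neither blocker) to violate sparsity. The paper leaves that last step implicit, while you spell it out; otherwise the arguments coincide.
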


\begin{proof}
    Suppose that there are blockers \( H_1 \), respectively 
    \( H_2 \), for \( G_{T,e_1} \) respectively \( G_{T,e_2} \). 
    Then \( v_1,v_3 \in H_1 \cup H_2 \). However 
    \( v_3\not\in H_1 \) and \( v_1 \not\in H_2 \) so
    \( e_3 \not\in H_1\cup H_2 \). However \( v_2 \in H_1\cap H_2 \)
    so by Lemma \ref{lem_int_unions}, \( H_1\cup H_2 \) is 
    \( (2,2) \)-tight. This contradicts the sparsity of \( G \).
\end{proof}

\subsection{Quadrilateral contractions}

In the case of quadrilaterals we consider a somewhat different contraction
move. In this case the analysis is a little more complicated and we include the 
details. 

Suppose that \( Q \) is a quadrilateral of \( G \) 
with possibly degenerate boundary walk \( v_1,e_1,v_2,e_2,v_3,e_3,v_4,e_4,v_1 \).
Suppose that \( v_1 \neq v_3 \) and \( e_1 \neq e_3 \). 
Let \( d \) be a new edge that joins \( v_1 \) and \( v_3 \) and 
is embedded as a diagonal of the quadrilateral \( Q \). Define 
\( G_{Q,v_1,v_3} \) to be \( (G\cup \{d\})/d - \{e_1,e_3\} \). Clearly the 
underlying graph of \( G_{Q,v_1,v_3} \) is obtained 
from \( \Gamma \) by identifying the vertices \( v_1 \) and 
\( v_3 \) and then deleting \( e_1 \) and \( e_3 \). Thus 
\( \gamma(G) = \gamma(G_{Q,v_1,v_3}) \). However this 
quadrilateral contraction move does not necessarily preserve
\( (2,l) \)-sparsity.

\begin{lem}
    \label{lem_quad_blocker}
    Suppose that \( G \) is \( (2,l) \)-sparse but 
    \( G_{Q,v_1,v_3} \) is not \( (2,l) \)-sparse. Then 
    at least one of the following statements is true.
    \begin{enumerate}
        \item There is some subgraph \( H \) of \( G \)
            such that \( v_1,v_3 \in H \), exactly 
            one of \( v_2,v_4  \) is in \( H \) and 
            \( \gamma(H) = l \). (\( H \) is called a type 1 blocker.)
        \item There is some subgraph \( K \) of \( G \) 
            such that \( v_1,v_3 \in K \), \( v_2,v_4 \not\in K \) 
            and \( \gamma(K) = l+1 \). (\( K \) is called a type 2 blocker.)
    \end{enumerate}
\end{lem}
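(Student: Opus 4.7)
The plan is to take a subgraph $H'$ of $G_{Q,v_1,v_3}$ witnessing the failure of sparsity, so $\gamma(H') \leq l-1$, and lift it back to a subgraph $H$ of $G$. Writing $v^*$ for the image of $v_1$ and $v_3$ after contracting the diagonal $d$, my first step is to observe $v^* \in V(H')$: otherwise $H'$ is already a subgraph of $G$ with $\gamma < l$, contradicting the assumed $(2,l)$-sparsity. Let $H$ be the subgraph of $G$ with vertex set $(V(H') \setminus \{v^*\}) \cup \{v_1,v_3\}$ and edge set $E(H')$, interpreted in $G$. Since the edges $e_1,e_3$ were deleted when forming $G_{Q,v_1,v_3}$ they are not in $E(H)$. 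A direct count gives $|V(H)| = |V(H')|+1$ and $|E(H)|=|E(H')|$, hence $\gamma(H) = \gamma(H')+2$ and so $l \leq \gamma(H) \leq l+1$, the lower bound coming from the sparsity of $G$.

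Next I would do a case analysis based on which of $v_2,v_4$ lie in $V(H)$. If both do, then $e_1$ and $e_3$ may be adjoined to $H$ (their endpoints are all present and the edges are not already in $H$), producing a subgraph of $G$ with $\gamma$-value $\gamma(H)-2 \leq l-1$, contradicting the sparsity of $G$; so this case cannot occur. If exactly one lies in $V(H)$, say $v_2$, then adjoining just $e_1$ gives a subgraph of $G$ with $\gamma$-value $\gamma(H)-1$, which sparsity forces to be at least $l$; hence $\gamma(H)=l+1$ and $H \cup \{e_1\}$ is a $(2,l)$-tight subgraph containing $v_1,v_2,v_3$ but not $v_4$, a type 1 blocker.

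The remaining case is $v_2,v_4 \notin V(H)$. If $\gamma(H)=l+1$ then $H$ itself is a type 2 blocker. Otherwise $\gamma(H) = l$, and since $v_1 \neq v_3$ both lie in $V(H)$ we have $|V(H)| \geq 2$, so $|E(H)| = 2|V(H)|-l \geq 2$. Deleting any edge of $H$ produces a subgraph of $G$ still containing $v_1,v_3$ and still missing $v_2,v_4$, with $\gamma$-value $l+1$: a type 2 blocker.

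The main obstacle is the final subcase, where the lift $H$ is already $(2,l)$-tight but fits neither template because it omits both $v_2$ and $v_4$. The clean fix is to slide to a type 2 witness by removing an edge, rather than attempting to attach $v_2$ or $v_4$ together with their incident quadrilateral edges; this avoids complications in degenerate configurations where, for example, $v_2 = v_4$. Everything else reduces to routine counting with $\gamma = 2|V|-|E|$ and direct applications of the sparsity of $G$.
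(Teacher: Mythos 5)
Your argument is correct, and its skeleton --- lift a sparsity-violating subgraph of \( G_{Q,v_1,v_3} \) to a subgraph \( H \) of \( G \) containing \( v_1,v_3 \), note \( l\le\gamma(H)\le l+1 \), and case-split on which of \( v_2,v_4 \) lie in \( H \) --- is the same as the paper's (the paper works with a maximal violating subgraph and its induced lift, but nothing essential depends on that). The genuine divergence is the subcase \( \gamma(H)=l \) with \( v_2,v_4\notin H \): the paper adjoins \( v_2 \) together with \( e_1 \) and \( e_2 \) (using \( e_1\neq e_2 \), which follows from \( v_1\neq v_3 \)) to manufacture a type 1 blocker, whereas you delete an arbitrary edge of \( H \) (legitimate, since \( |E(H)|=2|V(H)|-l\ge 2 \)) to obtain a subgraph with \( \gamma=l+1 \) containing \( v_1,v_3 \) and avoiding \( v_2,v_4 \), i.e.\ a type 2 blocker exactly as defined. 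Both resolutions are valid: yours only ever manipulates the deleted edges \( e_1,e_3 \), so there is nothing to verify about \( e_2 \) or \( e_4 \) possibly already lying in the lift, while the paper's augmentation produces a tight (type 1) witness, which is the more natural blocker. Two small points: in your \emph{exactly one} case the symmetric alternative \( v_4\in H \) must use \( e_3 \) rather than \( e_4 \), since only the two deleted edges are guaranteed absent from \( H \); and your type 2 witness is neither maximal nor obviously connected, but the later applications (Lemma \ref{lem_blocked_quad} and the construction of the loops \( \alpha_1,\alpha_2 \)) use only the properties recorded in the statement, with connectivity recovered from Lemma \ref{lem_connected} when \( l=2 \), so this causes no problems downstream.
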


\begin{proof}
    Let \( K \) be a maximal subgraph of \( G_{Q,v_1,v_3} \) satisfying 
    \( \gamma(K) \leq l-1 \). Let \( z \) be the vertex of 
    \( G_{Q,v_1,v_3} \) corresponding to \( v_1 \) and \( v_3 \). 
    Clearly \( z \in K \), otherwise \( K \) would also 
    be a subgraph of \( G \). Let \( H \) be the maximal subgraph 
    of \( G \) satisfying \( (H\cup\{d\})/d - \{e_1,e_3\} = K \). It is 
    clear that \( H \) is an induced subgraph, since \( K \) is an 
    induced subgraph. If \( \{v_2,v_4\} \subset H \), then 
    \( \gamma(H) = \gamma(K) \leq l-1 \) which contradicts the 
    sparsity of \( G \). So at most one of \( v_2,v_4  \) belongs
    to \( H \). Also, it is clear that \( l \leq \gamma(H) \leq \gamma(K)+2
    \leq l-1\). So \( \gamma(H) = l \) or \( l+1 \). If \( \gamma(H) = l \)
    and one of \( v_2,v_4 \in H \) then (1) is true. If \( \gamma(H) = l \) and 
    neither of \( v_2,v_4 \) is in \( H \), then let \( H' = H \cup \{v_2\} \cup 
    \{e_1,e_2\}\). Now observe that \( e_1 \neq e_2 \) since \( v_1 \neq v_3 \). 
    Thus \( \gamma(H') = \gamma(H) = l \) and, again, (1) is true. Finally 
    if \( \gamma(H) = l+1 \). Then \( \gamma(H) = \gamma(K)+2 \) and 
    since \( H \) is an induced graph, it follows 
    that neither of \( v_2,v_4 \) belongs to \( H \). Thus (2) is true in this case.
\end{proof}

In the special case that \( l =2 \), various degeneracies are forbidden.
Now suppose that 
\( G \) is a \( (2,2) \)-sparse \( \Sigma \)-graph 
and that 
\( Q \) is a quadrilateral face of \( G \) with boundary walk 
\( v_1,e_1,v_2,e_2,v_3,e_3,v_4,e_4,v_1\).

\begin{lem}
    \label{lem_quad_nondegen_1}
    For \( i = 1,2,3 \),
    \( v_i 
    \neq v_{i+1}\), and $v_1 \neq v_4$.
\end{lem}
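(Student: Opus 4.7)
The plan is to observe that all four statements are really the same statement: if two consecutive vertices in the boundary walk of $Q$ coincide, then the edge joining them is a loop, and $(2,2)$-sparse graphs cannot contain loops.

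More precisely, I would proceed as follows. Suppose for contradiction that $v_i = v_{i+1}$ for some $i \in \{1,2,3\}$, or that $v_4 = v_1$. In either case the edge $e_i$ (respectively $e_4$) has equal endpoints in $\Gamma$, so it is a loop edge. Consider the subgraph $H$ of $\Gamma$ consisting of the single vertex $v_i$ together with this loop edge. Then $|V(H)| = 1$ and $|E(H)| = 1$, so $\gamma(H) = 2 \cdot 1 - 1 = 1 < 2$, contradicting the $(2,2)$-sparsity of $G$. This is exactly the observation recorded in the paper that $(2,2)$-tight (and more generally $(2,2)$-sparse) graphs cannot have loop edges.

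The only potential subtlety is ensuring that the notion of boundary walk is being used correctly: the walk lists the edges encountered as one traverses the boundary of the face, with consecutive entries $v_j, e_j, v_{j+1}$ meaning that $e_j$ has endpoints $v_j$ and $v_{j+1}$. Under this standard convention the argument is immediate and there is no real obstacle; the lemma is just flagging that the only type of degeneracy ruled out automatically by sparsity for quadrilateral faces is the collapse of an edge to a loop. (Note that other degeneracies, such as $v_1 = v_3$, $v_2 = v_4$, or $e_1 = e_3$, are explicitly not ruled out here, and indeed the analysis of these cases is precisely what motivates the running hypotheses $v_1 \neq v_3$ and $e_1 \neq e_3$ that were imposed when defining the contraction $G_{Q,v_1,v_3}$.)
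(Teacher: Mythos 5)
Your proof is correct and is exactly the paper's argument: the paper's one-line proof is ``Loop edges are forbidden in a $(2,2)$-sparse graph,'' and you have simply spelled out the sparsity count $\gamma = 2\cdot 1 - 1 = 1 < 2$ that justifies it. No issues.
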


\begin{proof}
    Loop edges are 
    forbidden in a \( (2,2) \)-sparse graph. 
\end{proof}

\begin{lem}
    \label{lem_orient_no_repeat_edges}
    Suppose that \( \Sigma \) is orientable and that \( G \) is 
    \( (2,2) \)-tight. Then \( e_i \neq e_j \) for \( 1\leq i < j\leq 4 \).
\end{lem}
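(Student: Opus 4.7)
The plan is a case analysis on which pair $(i,j)$ with $1 \leq i < j \leq 4$ could witness $e_i = e_j$. I would group the six cases into the four \emph{adjacent} ones, namely $(1,2)$, $(2,3)$, $(3,4)$, $(1,4)$, and the two \emph{opposite} ones, $(1,3)$ and $(2,4)$, and expect the dihedral symmetry of $Q$ to reduce each group to a single case.

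For the adjacent case, say $e_1 = e_2$, I would argue via the rotation system. In the local fragment $\dots e_1\, v_2\, e_2 \dots$ of the boundary walk, the departing edge-end of $e_2$ at $v_2$ is the cyclic successor (in the rotation at $v_2$) of the arriving edge-end of $e_1$. Since $v_1 \neq v_2$ by Lemma \ref{lem_quad_nondegen_1} and a $(2,2)$-sparse graph has no loops, the edge $e_1$ contributes exactly one edge-end at $v_2$; so $e_1 = e_2$ forces the rotation at $v_2$ to consist of only that one edge-end, making $\deg_G(v_2) = 1$. But every vertex of a $(2,2)$-tight graph on at least two vertices has degree at least $2$: the one-line computation $\gamma(G - v_2) = \deg(v_2)$ together with the sparsity bound $\gamma(G - v_2) \geq 2$ gives this, contradicting $\deg_G(v_2) = 1$. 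The other three adjacent cases are handled identically.

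For the opposite case, say $e_1 = e_3$, the orientability hypothesis enters. I would invoke the standard fact that, in an oriented surface embedding, if an edge appears twice in the boundary walk of a single face, then the two traversals go in opposite directions; this reflects the fact that a consistent orientation of $\Sigma$ induces opposite boundary orientations on the two sides of every edge. Thus the traversal $v_1 \to v_2$ via $e_1$ is the reverse of the traversal $v_3 \to v_4$ via $e_3$, forcing $v_3 = v_2$ and $v_4 = v_1$, either of which contradicts Lemma \ref{lem_quad_nondegen_1}. The remaining case $e_2 = e_4$ is symmetric.

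The main point of care is the clean invocation of orientability in the opposite case: I would preface the argument with a short reminder (or a citation to Mohar--Thomassen) of the fact that in an oriented embedding a face which has both sides of an edge on its boundary must traverse that edge once in each direction. Everything else is routine bookkeeping on the rotation system together with the elementary minimum-degree consequence of $(2,2)$-tightness.
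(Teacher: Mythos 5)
Your argument is correct and is essentially the paper's proof written out in full: the paper disposes of the lemma in one line by observing that (by orientability) a repeated edge in $\partial Q$ forces either a degree-one vertex or a loop edge, both impossible in a $(2,2)$-tight graph, and your adjacent/opposite case split is exactly this observation made explicit (the opposite case, where the two traversals are in reverse directions, yields $v_3=v_2$, $v_4=v_1$, i.e.\ loop edges, which is the content of your appeal to Lemma \ref{lem_quad_nondegen_1}). The extra detail you supply — the rotation-system bookkeeping and the minimum-degree computation $\gamma(G-v_2)=\deg(v_2)\geq 2$ — is a sound expansion of what the paper leaves implicit.
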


\begin{proof}
    We observe that since \( \Sigma \) is orientable, 
    a repeated edge in \( \partial Q \)
    implies the existence either of a vertex of degree one or of a loop edge.
    Both of these are forbidden in a \( (2,2) \)-tight graph. 
\end{proof}

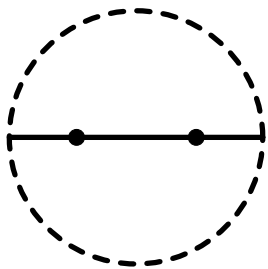
\begin{figure}
    \centering

    \definecolor{xdxdff}{rgb}{0,0,0}
\begin{tikzpicture}[line cap=round,line join=round,>=triangle 45,x=0.5cm,y=0.5cm]
    \draw [line width=2pt,dash pattern=on 5pt off 5pt] (-8.04,2.09) circle (1.684395cm);
\draw [line width=2pt] (-11.40879800522382,2.09)-- (-9.62,2.09);
\draw [line width=2pt] (-9.62,2.09)-- (-6.44,2.09);
\draw [line width=2pt] (-6.44,2.09)-- (-4.671201994776178,2.09);
\begin{scriptsize}
\draw [fill=xdxdff] (-9.62,2.09) circle (3pt);
\draw [fill=xdxdff] (-6.44,2.09) circle (3pt);
\end{scriptsize}
\end{tikzpicture}

\caption{A \( (2,2) \)-tight projective plane graph. Here we are using the representation of the projective plane as a disc with antipodal boundary points identified. This surface graph has a single quadrilateral face, with a degenerate boundary walk.}
    \label{fig_proj_plane}
\end{figure}

\begin{lem}
    \label{lem_degen_quad}
    Suppose that \( \Sigma \) is orientable, \( G \) is \( (2,2) \)-tight
    and that \( v_1=v_3 \). Then \( v_2 \neq v_4 \). Furthermore
    \( G_{Q,v_2,v_4} \) is also \( (2,2) \)-tight.
\end{lem}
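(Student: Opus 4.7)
The statement splits cleanly into two parts and I would treat them in order.

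First I would establish $v_2 \neq v_4$. Suppose for contradiction that $v_2 = v_4$. Then every edge of $\partial Q$ has both endpoints in $\{v_1, v_2\}$. By Lemma \ref{lem_orient_no_repeat_edges}, applied to the orientable $(2,2)$-tight graph $G$, the edges $e_1, e_2, e_3, e_4$ are pairwise distinct. Therefore the subgraph of $G$ induced on $\{v_1, v_2\}$ contains at least four parallel edges, so its $\gamma$-value is at most $2 \cdot 2 - 4 = 0$, directly contradicting the $(2,2)$-sparsity of $G$.

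For the second part, I would invoke Lemma \ref{lem_quad_blocker} with the roles of the pairs $(v_1, v_3)$ and $(v_2, v_4)$ interchanged; this is legitimate because a cyclic relabelling of the boundary walk of $Q$ turns the contraction $G_{Q, v_2, v_4}$ into one of the form covered by that lemma. Since the quadrilateral contraction preserves $\gamma$, we already have $\gamma(G_{Q, v_2, v_4}) = \gamma(G) = 2$, so it only remains to verify that $G_{Q, v_2, v_4}$ is $(2,2)$-sparse. If it were not, the lemma would furnish either a type 1 blocker $H$ containing $v_2, v_4$ and exactly one of $v_1, v_3$, or a type 2 blocker $K$ containing $v_2, v_4$ but neither of $v_1, v_3$ with $\gamma(K) = 3$. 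The type 1 alternative is automatically impossible under the hypothesis $v_1 = v_3$, since then ``exactly one of $v_1, v_3$'' is a nonsensical condition on a single vertex.

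The main step is therefore to rule out the type 2 blocker, and this is where I would expect the one genuine idea of the argument to sit. Given such a $K$, I would form the augmented subgraph $K' = K \cup \{v_1\} \cup \{e_1, e_2, e_3, e_4\}$. Under $v_1 = v_3$, each $e_i$ is incident to $v_1$ and has its other endpoint in $\{v_2, v_4\} \subseteq V(K)$, so $K'$ is a legitimate subgraph of $G$. By Lemma \ref{lem_orient_no_repeat_edges} the four edges $e_i$ are pairwise distinct, and none of them lies in $K$ because each is incident to $v_1 \notin K$. Hence $|V(K')| = |V(K)| + 1$ and $|E(K')| = |E(K)| + 4$, giving $\gamma(K') = \gamma(K) + 2 - 4 = 1$, which contradicts the $(2,2)$-sparsity of $G$. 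The only mild obstacle is seeing that the degeneracy $v_1 = v_3$ forces all four boundary edges of $Q$ to meet at $v_1$; once this is noticed, the augmentation drops $\gamma$ by exactly the $2$ needed to reach the contradiction.
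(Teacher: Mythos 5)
Your proposal is correct and follows essentially the same route as the paper: the case $v_2=v_4$ is excluded by combining Lemma \ref{lem_orient_no_repeat_edges} with $(2,2)$-sparsity, and the tightness of $G_{Q,v_2,v_4}$ is proved by noting that any blocker must be of type 2 (type 1 being impossible when $v_1=v_3$) and then deriving a sparsity contradiction from the four distinct boundary edges joining $v_1$ to the blocker. Your explicit computation $\gamma(K')=\gamma(K)+2-4=1$ merely spells out the step the paper leaves implicit.
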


\begin{proof}
    Suppose that \(v_2 = v_4\). By Lemma \ref{lem_quad_nondegen_1}
    and the sparsity of \( G \), \( \partial Q \) has exactly two vertices 
    and two edges. This contradicts Lemma \ref{lem_orient_no_repeat_edges}.
    Thus
    \( v_2 \neq v_4. \)

    Now suppose that \( G_{Q,v_2,v_4} \) is not \( (2,2) \)-tight.
    By Lemma \ref{lem_quad_blocker} there is a blocker for this 
    contraction. Since \( v_1 = v_3 \) by assumption, the blocker must 
    be a type 2 blocker. Thus we have a subgraph \( K  \) such that 
    \( \gamma(K) = 3 \), \( v_2,v_4 \in K \) and \( v_1 \not\in K \).
    However, by Lemma \ref{lem_orient_no_repeat_edges} there 
    are at least four edges joining \( v_1 \) to \( K \), 
    contradicting the sparsity of \( G \).
\end{proof}

See Figure \ref{fig_proj_plane} for an example of \( (2,2) \)-tight projective plane graph
whose only face is a quadrilateral with repeated edges in the boundary walk.
This example 
shows that orientability is a necessary hypothesis in the
statements of Lemmas \ref{lem_orient_no_repeat_edges} and \ref{lem_degen_quad}.

\begin{lem}
    \label{lem_blocked_quad}
    Suppose that \( \Sigma \) is orientable, \( G \) is \( (2,2) \)-tight
    and \( Q \) is a quadrilateral face of \( G \) such that
    neither \( G_{Q,v_1,v_3} \) nor \( G_{Q,v_2,v_4} \) is 
    \( (2,2) \)-sparse. Then \( Q \) has a non degenerate 
    boundary. Furthermore, if \( H_1 \) and \( H_2 \) are 
    blockers for \( G_{Q,v_1,v_3} \) respectively \( G_{Q,v_2,v_4} \),
    then both \( H_1 \) and \( H_2 \) are type 2 blockers and 
    \( H_1 \cap H_2 = \emptyset \).
\end{lem}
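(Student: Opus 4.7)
The plan is to establish each of the three assertions by a short contradiction argument combining the inclusion--exclusion identity $(\ref{lem_incl_excl})$ with $(2,2)$-sparsity and the elementary observation that a $(2,2)$-tight subgraph of a $(2,2)$-sparse graph is vertex-induced. The non-degeneracy of $\partial Q$ is essentially already done: Lemmas \ref{lem_quad_nondegen_1} and \ref{lem_orient_no_repeat_edges} leave only the possibilities $v_1=v_3$ or $v_2=v_4$, and Lemma \ref{lem_degen_quad} then forces the opposite quadrilateral contraction to be $(2,2)$-tight, hence $(2,2)$-sparse, contradicting the hypothesis that both contractions are blocked.

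Next I would show both blockers must be of type 2, by contradiction. Suppose $H_1$ is a type 1 blocker; after relabelling $v_2\leftrightarrow v_4$ if necessary, $\{v_1,v_2,v_3\}\subseteq V(H_1)$, $v_4\notin V(H_1)$, and $\gamma(H_1)=2$. Because $H_1$ is tight inside a sparse graph it is vertex-induced, so $e_1,e_2\in H_1$. Regardless of whether $H_2$ is of type 1 or type 2, we have $v_2\in V(H_2)$, hence $H_1\cap H_2\ne\emptyset$ and sparsity gives $\gamma(H_1\cap H_2)\geq 2$. Inclusion--exclusion then yields $\gamma(H_1\cup H_2)\leq \gamma(H_1)+\gamma(H_2)-2\leq 3$. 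A short case-by-case inspection on where $V(H_2)\cap\{v_1,v_3\}$ lies shows that $H_1\cup H_2$ already contains all four corners of $Q$, and is missing exactly one edge from $\{e_1,\ldots,e_4\}$ when $H_2$ is type 1, and exactly two when $H_2$ is type 2. Adjoining the missing edges (whose endpoints are already present) produces a subgraph of $G$ with $\gamma\leq 1$, contradicting sparsity. Hence $H_1$ is type 2, and by the symmetric argument so is $H_2$.

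Finally, for disjointness, both $H_1,H_2$ are now type 2, so $\gamma(H_i)=3$, $V(H_1)\cap\{v_2,v_4\}=\emptyset$ and $V(H_2)\cap\{v_1,v_3\}=\emptyset$. In particular none of $e_1,\ldots,e_4$ lies in $H_1\cup H_2$, since each $e_i$ has an endpoint missing from each $H_i$. Assuming $H_1\cap H_2\neq\emptyset$, sparsity gives $\gamma(H_1\cap H_2)\geq 2$, so $(\ref{lem_incl_excl})$ gives $\gamma(H_1\cup H_2)\leq 4$. Adjoining the four boundary edges $e_1,\ldots,e_4$ to $H_1\cup H_2$ then yields a subgraph of $G$ with $\gamma\leq 0$, again contradicting sparsity.

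The step requiring the most care is the type 1 analysis, where the book-keeping of which corner vertices and which edges of $\partial Q$ belong to $H_1$, $H_2$, and $H_1\cup H_2$ splits into a handful of sub-cases depending on the location of $V(H_2)\cap\{v_1,v_3\}$; the method is uniform, reducing in each case to the same inclusion--exclusion computation followed by an edge-adjoining argument against sparsity.
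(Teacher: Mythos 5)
Your proposal is correct and follows essentially the same route as the paper: non-degeneracy from Lemmas \ref{lem_quad_nondegen_1}, \ref{lem_orient_no_repeat_edges} and \ref{lem_degen_quad}, elimination of type 1 blockers by inclusion--exclusion plus adjoining the missing boundary edges against sparsity, and disjointness via the same computation with all four edges $e_1,\dots,e_4$. The only differences (your relabelling $v_2\leftrightarrow v_4$, the explicit use of vertex-induced tight subgraphs, and phrasing the last step as a contradiction rather than directly deducing $\gamma(H_1\cap H_2)\le 0$) are cosmetic.
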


\begin{proof}
    The non degeneracy of the boundary walk of \( Q \)  
    follows immediately from 
    Lemmas \ref{lem_quad_nondegen_1}, \ref{lem_orient_no_repeat_edges} 
    and \ref{lem_degen_quad}.
    
    Now suppose that one of the blockers, say \( H_1 \), is of type 1 and 
    suppose that \( v_2 \not\in H_1 \). Then \( v_4 \in H_1 \cap H_2
    \).
    So 
    \( \gamma(H_1 \cup H_2) =  \gamma(H_1) +\gamma(H_2) -\gamma(H_1\cap H_2)
    \leq 2+\gamma(H_2) -2 = \gamma(H_2) \). Now if \( H_2 \) is also type 
    1 then \( \gamma(H_1 \cup H_2) = 2 \). However \( v_1,v_2,v_3 \in 
    H_1\cup H_2 \)
    but \( H_1 \cup H_2 \) does not contain one of \( e_1,e_2 \)
    which contradicts the sparsity of \( G \). Similarly if \( H_2 \)
    is type 2, then \( \gamma(H_1 \cup H_2) \leq 3 \), but \( H_1 \cup 
    H_2\) does not contain either of \( e_1,e_2 \), again contradicting 
    the sparsity of \( G \). 
    
    So both \( H_1 \) and \( H_2 \) are type 2 blockers. Moreover
    \( v_1,v_2,v_3,v_4 \in H_1 \cup H_2 \) but \( e_1,e_2,e_3,e_4 \not\in 
    H_1\cup H_2\). Now
    \begin{eqnarray*}
        2 &\leq& \gamma(H_1 \cup H_2 \cup \{e_1,e_2,e_3,e_4\})\\
        &=&  \gamma(H_1) +\gamma(H_2) -\gamma(H_1 \cap H_2) -4 \\
        & =& 2 - \gamma(H_1 \cap H_2) \\
    \end{eqnarray*}
    So \( \gamma(H_1 \cap H_2) \leq 0 \) which implies that 
    \( H_1 \cap H_2 = \emptyset\).
\end{proof}

\subsection{Simple loops in surfaces}
\label{subsec_simple_loops}

Now we briefly digress to review some necessary terminology and facts from low dimensional topology.
Proofs of all of the assertions below can be found in (or at least 
easily deduced from) many sources (for example \cite{MR2850125}).
A loop in a surface \( \Sigma \) is a continuous function \( \alpha:
S^1 \rightarrow \Sigma\). We say that \( \alpha \) is simple if it is
injective. We say that \( \alpha \) is non separating if \( \Sigma - \alpha(S^1)\)
has the same number of connected components as \( \Sigma \). 
Given simple loops \( \alpha,\beta \) in \( \Sigma \), recall 
that the geometric intersection number is defined 
by 
\[ i(\alpha,\beta) = \min |\alpha'(S^1) \cap \beta'(S^1)|  \]
where \( \alpha' \), respectively \( \beta' \), varies over 
all simple loops that are homotopic to \( \alpha \), respectively
\( \beta \). 
If \( i(\alpha,\beta) \neq 0 \) then both \( \alpha \) and \( \beta \)
are essential: that is to say they are not null homotopic.
If \( i(\alpha,\beta) = 1 \) then both \( \alpha \) and \( \beta \)
are non separating in \( \Sigma \). 
In the special case that \( \Sigma \) is the torus, 
if \( i(\alpha,\beta) = 0  \) and \( i(\beta,\delta) = 0 \) then 
\( i(\alpha,\delta) = 0 \).

Given 
a simple loop \( \alpha \) in a surface \( \Sigma \) 
we say that \( \Sigma - \alpha(S^1) \) is the surface 
obtained by cutting along \( \alpha \).
Given a surface \( \Sigma \) with boundary we can cap a boundary
component by gluing a copy of a closed disc to the surface along the 
given boundary component.

If \( \Sigma \) is an orientable surface
of genus \( g \) and \( \alpha \)
is a non separating simple loop in \( \Sigma \) then we form 
\( \Sigma^\alpha \) by removing a tubular neighbourhood of \( \alpha \) and then 
capping the two resulting new boundary components. Clearly \( \Sigma^\alpha \)
is an orientable surface of genus \( g-1 \).

Suppose that 
\( G \) is a \( \Sigma \)-graph and let \( F \) 
be a face of \( G \). Further suppose that \( \alpha \) is a non separating 
loop in \( \Sigma \) such that \( \alpha(S^1) \subset F \). 
By cutting and capping \( \Sigma \) along \( \alpha \) we can form 
a \( \Sigma^\alpha \)-graph, denoted \( G^\alpha \), which has the 
same underlying graph as \( G \). 
Observe that all faces of \( G^\alpha \) except the one(s) corresponding 
to \( F \) are also faces of \( G \).

Finally some terminology. If \( G = (\Gamma,\varphi)\) 
is a \( \Sigma \)-graph 
and \( \alpha \) is a loop in \( \Sigma \), we say that \( \alpha \)
is contained in \( G \) if \( \alpha(S^1) \subset \varphi(|\Gamma|) \).

Now we return to the situation of Lemma \ref{lem_blocked_quad}. 
Suppose that $Q$ is a quadrilateral in $G$ as in the statement of that lemma.
We say that the quadrilateral \(Q\) is blocked. By Lemma \ref{lem_connected} the blocker \(H_1\)
is connected so it is possible to find a simple walk from \(v_1\) to \(v_3\) in 
\(H_1\). By concatenating the geometric realisation of this walk with the diagonal of \(Q\) joining \(v_3\) and 
\(v_1\) we obtain a simple loop in \(\Sigma\), which we denote by \( \alpha_1\). 
Note that we can choose different parameterisations of this loop, but this ambiguity 
will make no difference in our context.
Similarly we construct another simple loop, denoted \(\alpha_2\), by concatenating 
a walk in \(H_2\) with the diagonal of \(Q\) that joins \(v_4\) and \(v_2\). Now since 
\(H_1 \cap H_2\) is empty by Lemma \ref{lem_blocked_quad}, we can choose these loops so that they intersect 
transversely at exactly one point (where the diagonals meet). Thus these loops 
have geometric intersection number equal to one. In particular, we note that
both \( \alpha_1 \) and \( \alpha_2\) must be non separating loops in \( \Sigma \). These loops will play an important role in the following sections.

\section{Irreducible surface graphs}
\label{sec_irredsurface}

Let \( G \) be a \( (2,2) \)-tight \( \Sigma \)-graph. In light 
of Lemmas 
\ref{lem_digon_contractible} and \ref{lem_tri_contractible} we say that 
\( G \) is irreducible if it has no digons, no triangles and if,
for every quadrilateral face of \( G \), both of the possible contractions
result in graphs that are not \( (2,2) \)-sparse.

For each of the contractions described in Section 
\ref{sec_inductive} there are the corresponding vertex splitting 
moves. More precisely, if \( G' = G_D \), 
respectively \( G' = G_{T,e} \), respectively 
\( G' = G_{Q,u,v} \) for some digon \( D \), respectively triangle 
\( T \) and edge \( e \in \partial T\), 
respectively quadrilateral \( Q \) and vertices \( u,v \in \partial Q \),
then we say that \( G \) is obtained from \( G' \) by a digon,
respectively triangle, respectively quadrilateral split. 
Thus every \( (2,2) \)-tight \( \Sigma \)-graph can be constructed 
from some irreducible by applying a sequence of digon/triangle/quadrilateral
splits. Our goal is to identify, for various surfaces, the set of 
irreducibles. 

\begin{conj}
    \label{conj_finite}
    If  \( \Sigma \) is a surface with finite genus and finitely many 
    boundary components and punctures,
    then there are finitely many distinct isomorphism classes of 
    irreducible \( (2,2) \)-tight \( \Sigma \)-graphs.
\end{conj}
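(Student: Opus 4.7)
The plan is to reduce the conjecture to bounding the number of vertices: once $|V(G)|$ is bounded by some constant $N=N(\Sigma)$, only finitely many underlying multigraphs occur, and for each multigraph the Heffter--Edmonds--Ringel data (a rotation system, a partition of facial walks, and genus labels on the parts) needed to recover the $\Sigma$-graph up to isomorphism is drawn from a finite set. So it suffices to establish such a vertex bound.

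First I would pass to cellular embeddings. Boundary components and punctures can each be filled by a disc cap carrying a small auxiliary cellular subdivision; any non-cellular face traps positive genus, and since $\Sigma$ has finite total genus there are only finitely many such distributions, so bounding the cellular case at the cost of worsening $N$ by a controlled amount is enough. I would then apply Theorem \ref{thm_euler_sparsity} with $\gamma(G)=2$, giving $\sum_{i\geq 0}(4-i)f_i=4-8g$. Irreducibility forbids digons and triangles, so $f_2=f_3=0$, and $(2,2)$-tightness forbids loops, so $f_1=0$. Rearranging yields $\sum_{i\geq 5}(i-4)f_i = 8g-4+4f_0$ with $f_0\in\{0,1\}$, which bounds every $f_i$ with $i\geq 5$ by an explicit function of $g$, and in particular bounds the total contribution of non-quadrilateral cellular faces to $2|E|=\sum i f_i$. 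What remains is to bound $f_4$.

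Third, to bound $f_4$ I would exploit the topological structure described at the end of Section \ref{subsec_simple_loops}. Every quadrilateral face $Q$ of $G$ is blocked, producing disjoint type 2 blockers $H_1^Q, H_2^Q$ together with non-separating simple loops $\alpha_1^Q,\alpha_2^Q$ in $\Sigma$ of geometric intersection number one meeting transversely inside $Q$. My strategy is induction on $g$. Fix one blocked quadrilateral $Q_0$ and cut along $\alpha_1^{Q_0}$ to pass from $G$ to $G^{\alpha_1^{Q_0}}$ on the surface $\Sigma^{\alpha_1^{Q_0}}$ of genus $g-1$. Using Theorem \ref{thm_tight_sub_irred} together with Corollary \ref{cor_hole} applied to the blockers, I would argue that $(2,2)$-tightness is preserved by the cut and that most blocked quadrilaterals of $G$ survive as blocked quadrilaterals of the lower-genus graph. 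The base cases $g=0$ with few boundary components and the torus case itself are already handled by Theorem \ref{thm_ann_irred} and Theorem \ref{thm_main_torus}.

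The hard part, and indeed the reason the conjecture remains open, will be the inductive step. Blockers from different quadrilaterals can interact intricately: cutting along $\alpha_1^{Q_0}$ may destroy blockers of other quadrilaterals or create new quadrilateral faces that become contractible in $G^{\alpha_1^{Q_0}}$, so the number of blocked quadrilaterals need not decrease monotonically. Controlling this requires a dichotomy of the following kind: either many of the loops $\alpha_1^Q$ can be arranged simultaneously disjoint, in which case the union of the associated type 2 blockers is a nested family of $(2,2)$-tight subgraphs whose combined sparsity count eventually contradicts Lemma \ref{lem_int_unions} applied to $G$; or the loops cluster into finitely many homotopy classes on $\Sigma$, and a single cut resolves all quadrilaterals in a class at once. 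I expect this dichotomy to be the crux; the explicit enumerations achieved in the paper for the sphere, plane, annulus, and torus give evidence that such a statement is plausible, but extracting a clean genus-uniform form is precisely what the full proof of Conjecture \ref{conj_finite} seems to demand.
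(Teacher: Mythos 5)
This statement is a conjecture in the paper: the authors do not prove it, and they only establish it in special cases (the sphere and plane via Theorem \ref{thm_sphere_irred} and Corollary \ref{cor_plane_irred}, the annulus via Theorem \ref{thm_ann_irred}, and the torus via Theorem \ref{thm_main_torus} and Corollary \ref{cor_main_finite}). Your proposal is therefore not being measured against a proof in the paper, and, as you yourself concede in the final paragraph, it is not a proof either: the entire argument funnels into an inductive step whose key dichotomy (``either many of the loops $\alpha_1^Q$ can be made simultaneously disjoint, leading to a sparsity contradiction, or they fall into finitely many homotopy classes and one cut resolves a whole class'') is asserted, not proved. That dichotomy is precisely the open content of the conjecture, so the proposal is a research plan rather than an argument.

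Beyond that global gap, two specific steps would fail as stated. First, cutting along $\alpha_1^{Q_0}$ does not obviously preserve irreducibility: the cut graph $G^{\alpha_1^{Q_0}}$ lives on a lower-genus surface where previously blocked quadrilaterals may become contractible (a blocker in $G$ is a subgraph and survives the cut, but the loops witnessing non-separation need not, and new blockers are not guaranteed), and Theorem \ref{thm_tight_sub_irred} concerns tight subgraphs of a fixed irreducible $\Sigma$-graph, not graphs obtained by cutting the surface, so it does not deliver the claimed transfer. Second, the paper's torus argument leans essentially on Lemma \ref{lem_one_blocker_inessential}, whose proof uses the torus-specific fact that vanishing geometric intersection number is transitive ($i(\alpha,\beta)=0$ and $i(\beta,\delta)=0$ imply $i(\alpha,\delta)=0$); this fails on surfaces of genus at least two, so the mechanism that forces one blocker of each quadrilateral to be inessential --- the engine behind bounding $f_4$ on the torus --- has no higher-genus analogue in your outline. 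Your reduction to bounding $f_4$ via Theorem \ref{thm_euler_sparsity} is sound and matches the paper's strategy for the torus, but the part that would make the conjecture a theorem is exactly the part left open.
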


We will address some special cases of Conjecture \ref{conj_finite} in this and later sections.
Let \( \mathbb S \) be the \( 2 \)-sphere. 

\begin{thm}
    \label{thm_sphere_irred}
    If \( G \) is a \( (2,2) \)-tight \( \mathbb S \)-graph with at least 
    two vertices then \( G \) has at least two faces of 
    degree at most 3. In particular, any \( (2,2) \)-tight \( \mathbb S \)-graph 
    can be constructed from a single vertex by a sequence of digon and/or
    triangle splits.
\end{thm}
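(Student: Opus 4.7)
The plan is to apply Theorem \ref{thm_euler_sparsity} directly with $g=0$ and $\gamma(G)=2$, and then unpack the resulting identity to force the existence of small faces.

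First I would confirm that the hypotheses of Theorem \ref{thm_euler_sparsity} are met. Any $(2,2)$-tight graph is connected: otherwise, splitting $G$ into components would give each component at most $2|V_i|-2$ edges, totalling at most $2|V|-4$ and contradicting tightness. A connected graph embedded in the sphere is automatically cellular (each complementary region is a disk), so the theorem applies and yields $\sum_{i\geq 0}(4-i)f_i = 4$.

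Next I would rule out the very-small-degree terms that cannot arise under the assumptions. Since $|V(G)|\geq 2$, the graph is not a single vertex on the sphere, so $f_0=0$. Since $(2,2)$-tight graphs have no loop edges, the boundary of any face contains no loop, so $f_1=0$. Substituting into the Euler identity and isolating the positive contributions gives
\begin{equation*}
    2f_2 + f_3 \;=\; 4 + \sum_{i\geq 5}(i-4)f_i \;\geq\; 4.
\end{equation*}
A short check shows that each of $(f_2,f_3)=(0,0),(1,0),(0,1)$ makes $2f_2+f_3\leq 2$, contradicting the bound, so $f_2+f_3\geq 2$, which is the first assertion.

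For the ``in particular'' clause I would induct on $|V(G)|$. The base case is a single vertex, which is the trivial $(2,2)$-tight $\mathbb{S}$-graph. For $|V(G)|\geq 2$, the first part produces a face of degree $2$ or $3$; Lemma \ref{lem_digon_contractible} handles the digon case, and Lemma \ref{lem_tri_contractible} supplies at least one (in fact two) contractible edge in the triangle case. Each of these contractions removes one vertex and two edges, preserving $\gamma$, so the result is a smaller $(2,2)$-tight $\mathbb{S}$-graph to which the induction hypothesis applies; reversing the contraction as a digon or triangle split completes the construction. The argument is essentially routine given the tools already assembled; the only subtlety worth flagging is justifying cellularity of the embedding (needed to apply Theorem \ref{thm_euler_sparsity}), and this is exactly where connectedness of $(2,2)$-tight graphs on the sphere gets used.
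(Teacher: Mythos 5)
Your proposal is correct and follows essentially the same route as the paper: apply Theorem \ref{thm_euler_sparsity} after noting connectedness (the paper cites Lemma \ref{lem_connected} where you argue it directly), observe \(f_0=f_1=0\), and conclude \(2f_2+f_3\geq 4\), with the inductive construction then following from Lemmas \ref{lem_digon_contractible} and \ref{lem_tri_contractible}. The extra details you supply (the explicit induction for the ``in particular'' clause) are just a spelled-out version of what the paper leaves implicit.
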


\begin{proof}
    By Lemma 
    \ref{lem_connected}, \( G  \) is connected and therefore
    cellular. Since \( G \) has at least two vertices, \( f_0 = 0 \).
    Also 
    \(  f_1 = 0 \) by sparsity, so by Theorem \ref{thm_euler_sparsity},
    we see that \( 2f_2 +f_3 \geq 4 \).
\end{proof}

The case of plane graphs is similarly straightforward. 
\begin{cor}
    \label{cor_plane_irred}
    If \( G \) is a \( (2,2) \)-tight \( \mathbb R^2 \)-graph with at least 
    two vertices then \( G \) has at least one cellular face of 
    degree at most 3. In particular, any \( (2,2) \)-tight \( \mathbb R^2 \)-graph 
    can be constructed from a single vertex by a sequence of digon and/or
    triangle splits.
\end{cor}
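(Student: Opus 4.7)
The plan is to reduce the plane case directly to the sphere case via one-point compactification. First I would view $\mathbb R^2$ as $\mathbb S \setminus \{\infty\}$ and form the $\mathbb S$-graph $G^*$ by composing $\varphi$ with the inclusion $\mathbb R^2 \hookrightarrow \mathbb S$. Since $G^*$ has the same underlying graph as $G$, it is $(2,2)$-tight with at least two vertices, and Theorem~\ref{thm_sphere_irred} then furnishes at least two faces of $G^*$ of degree at most $3$.

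Next I would observe that the faces of $G^*$ not containing the point $\infty$ are precisely the cellular (bounded) faces of $G$, with the same boundary walks and hence the same degrees. Since at most one of the two low-degree faces of $G^*$ can be the one containing $\infty$, at least one cellular face of $G$ has degree $2$ or $3$, establishing the first assertion.

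For the inductive construction I would proceed by induction on $|V(G)|$, with the case $|V(G)|=1$ being immediate. For the inductive step the first assertion supplies a cellular digon or triangle face $F$, and Lemma~\ref{lem_digon_contractible} (if $F$ is a digon) or Lemma~\ref{lem_tri_contractible} (if $F$ is a triangle) provides a contraction of $F$ yielding a $(2,2)$-sparse $\mathbb R^2$-graph $G'$. Because digon and triangle contractions preserve $\gamma$, $G'$ is in fact $(2,2)$-tight and has one fewer vertex than $G$; by the inductive hypothesis it is constructed from a single vertex by digon/triangle splits, and reversing the contraction of $F$ completes the construction of $G$.

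The only real subtlety is ensuring that the face of degree at most $3$ produced by Theorem~\ref{thm_sphere_irred} is not the unique face of $G^*$ containing $\infty$; this is precisely why the sphere theorem was formulated with \emph{two} such faces, giving exactly the one face of slack needed here. Beyond that point the argument is purely bookkeeping, since contractions of a cellular digon or triangle in $\mathbb R^2$ remain valid contractions when performed in $\mathbb S$ and keep the result a plane graph.
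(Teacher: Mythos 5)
Your argument is correct and is essentially the paper's proof: passing to the one-point compactification is exactly the paper's ``cap the non-cellular face'' step, after which Theorem~\ref{thm_sphere_irred} supplies two faces of degree at most $3$, of which at most one can be the capped face containing $\infty$. The inductive construction via Lemmas~\ref{lem_digon_contractible} and~\ref{lem_tri_contractible} is the same routine bookkeeping the paper leaves implicit.
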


\begin{proof}
    Cap (i.e fill in the puncture of) the non cellular face of \( G \) and then apply Theorem \ref{thm_sphere_irred}.
\end{proof}

Theorem \ref{thm_sphere_irred} and Corollary \ref{cor_plane_irred} 
are implicit already in other places in the literature, we include them
here for completeness. We note that in both cases the quadrilateral 
splitting move is not required in the inductive characterisation. 

Now 
let \( \mathbb A \) be the twice punctured sphere \( \mathbb R^2 - \{(0,0)\} \).  Observe that for any positive integer \( n \), 
it is straightforward to construct an \( \mathbb A \)-graph
that has no digons or triangles, but has \( n \) quadrilateral 
faces. So, in contrast to the cases of the sphere or plane, we do require 
the quadrilateral contraction move 
in order to have finitely many irreducible
\( (2,2) \)-tight \( \mathbb A \)-graphs.

There are two obvious examples of 
irreducible \( (2,2) \)-tight \( \mathbb A \)-graphs, with one vertex 
and two vertices respectively:
see Figure \ref{fig_annular_irreds}.

\begin{thm}
    \label{thm_ann_irred}
    If \( G \) is an irreducible \( (2,2) \)-tight \( \mathbb A \)-graph,
    then \( G \) is isomorphic to one of the \( \mathbb A \)-graphs shown in Figure 
    \ref{fig_annular_irreds}.
\end{thm}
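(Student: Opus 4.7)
The plan is to apply Theorem \ref{thm_euler_sparsity} to $G$ viewed as a sphere graph, then use irreducibility together with the topology of $\mathbb{A}$ to force $|V(G)|\le 2$.

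First I would dispose of $|V(G)|=1$: since $(2,2)$-tight graphs have no loops, $G$ must be a single isolated vertex, giving the first graph of Figure \ref{fig_annular_irreds}. So assume $|V(G)|\ge 2$. By Lemma \ref{lem_connected}, $G$ is connected, hence its extension to an $\mathbb{S}$-graph via the inclusion $\mathbb{A}\hookrightarrow\mathbb{S}$ is cellular. Theorem \ref{thm_euler_sparsity} gives
\[
\sum_{i\ge 0}(4-i)f_i^{\mathbb{S}}=4,
\]
and sparsity together with $|V|\ge 2$ kill $f_0^{\mathbb{S}}$ and $f_1^{\mathbb{S}}$. The cellular $\mathbb{A}$-faces of $G$ are precisely the sphere faces that avoid the two punctures $p_1,p_2$, so irreducibility yields $f_2^{\mathrm{inn}}=f_3^{\mathrm{inn}}=0$ for these inner faces.

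A short case analysis on where the punctures lie then follows. If both lie in a common sphere face, the identity above forces that face to have degree $\le 0$, which is impossible for $|V|\ge 2$. Hence the punctures lie in two distinct sphere faces of degrees $d_1,d_2\ge 2$; the identity then gives $d_1+d_2\le 4$, so $d_1=d_2=2$, and every inner face is a quadrilateral. Combining $V-E+F^{\mathbb{S}}=2$ with $2V-E=2$ to get $F^{\mathbb{S}}=V$ produces exactly $|V|-2$ cellular quadrilateral $\mathbb{A}$-faces. When $|V|=2$ there are no cellular faces and $G$ consists of two vertices joined by two parallel edges, the second graph of Figure \ref{fig_annular_irreds}.

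The main obstacle and heart of the argument is to rule out $|V|\ge 3$. In that case there would be a cellular quadrilateral face $Q$, necessarily blocked by irreducibility, and the construction at the end of Section \ref{sec_inductive} combined with Lemma \ref{lem_blocked_quad} would produce two simple closed curves $\alpha_1,\alpha_2$ in $\mathbb{A}$ with geometric intersection number $1$; in particular both would be non-separating. However, up to isotopy the only simple closed curves in the twice-punctured sphere are null-homotopic curves and curves encircling a single puncture, and both types separate $\mathbb{A}$. This contradiction forces $|V|\le 2$, and uniqueness of each of the two resulting graphs up to $\mathbb{A}$-isomorphism is immediate from the combinatorial description.
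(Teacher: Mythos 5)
Your proposal is correct and follows essentially the same route as the paper: cap the punctures, use the Euler/sparsity identity of Theorem \ref{thm_euler_sparsity} to force the two puncture faces to be digons and all other faces to be quadrilaterals, and then kill quadrilaterals via the two transverse loops from Lemma \ref{lem_blocked_quad}, which would have to be non-separating in \( \mathbb A \), contradicting the Jordan Curve Theorem. The only difference is cosmetic: you case-split on whether the punctures share a sphere face and argue directly from the Euler identity, where the paper phrases the first case via Theorem \ref{thm_sphere_irred}, which amounts to the same count.
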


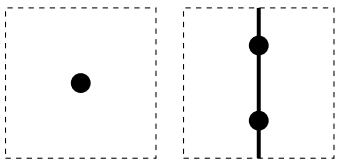
\begin{figure}
    \centering
\begin{tabular}{ccccc}
\begin{tikzpicture}[x=2.0cm,y=2.0cm]
\draw[dash pattern=on 2pt off 2pt] (0,0) rectangle (1,1);
\draw [fill=black,line width=1.5pt] (0.5,0.5) circle (3pt);

\draw (0.5,-0.15) node {$$};

\end{tikzpicture}
&
\begin{tikzpicture}[x=2.0cm,y=2.0cm]
\draw[dash pattern=on 2pt off 2pt] (0,0) rectangle (1,1);
\draw [fill=black,line width=1.5pt] (0.5,0.25) circle (3pt);
\draw [fill=black,line width=1.5pt] (0.5,0.75) circle (3pt);
\draw [line width=1.5pt] (0.5,0.25)--(0.5,0.75);
\draw [line width=1.5pt] (0.5,0.25)--(0.5,0.0);
\draw [line width=1.5pt] (0.5,1.0)--(0.5,0.75);

\draw (0.5,-0.15) node {$$};

\end{tikzpicture}
&
\end{tabular}

\caption{The two non cellular irreducible torus graphs. Here and 
in subsequent diagrams we use the standard representation of the torus as a square with opposite edges identified appropriately. Note that by cutting the torus along a non separating loop these graphs can also be viewed as graphs in the twice punctured sphere.}
    \label{fig_annular_irreds}
\end{figure}

\begin{proof}
    There are two cases to consider. First suppose that \( G \) 
    does not separate the two punctures of \( \mathbb A \). 
    In this it follows easily from Theorem \ref{thm_sphere_irred}
    that if $G$ has at least vertices then it has at least one triangular 
    face and so is not irreducible.

    Now suppose that \( G \) does separate the punctures of \( \mathbb A \).
    Clearly \( G \) has exactly two non cellular faces.
    By capping these 
    two faces, we create a \( (2,2) \)-tight \( \mathbb S \)-graph
    \( \tilde G \). This graph satisfies \( 2f_2+f_3 = 4+f_5+2f_6+
    \cdots \) and 
    since all but two of the faces of \( \tilde G\) are also 
    faces of the irreducible \( G \), it follows that the 
    two exceptional faces of \( \tilde G  \) are digons and 
    all other faces are quadrilateral faces of \( G \). 
    Thus it suffices to show that 
    there cannot be any quadrilateral faces in \( G \). 

    For a contradiction, suppose that \( Q \) is a quadrilateral.
    Since \( G \) is irreducible, both possible contractions of 
    \( Q \) are blocked and we infer the existence of 
    simple loops \(\alpha_1  \) and \( \alpha_2 \) as described 
    at the end of Section \ref{sec_inductive}. Recall that
    these loops intersect transversely at 
    exactly one point and thus
    \( \alpha_1 \) is non separating in \( \mathbb A \). However
    the Jordan Curve Theorem tells us 
    that any simple loop in \( \mathbb A \) must be 
    separating.
\end{proof}

\section{Subgraphs of irreducibles}
\label{sec_subgraphs}

Throughout this section, let \( \Sigma \) be an orientable boundaryless
surface and let \( G = (\Gamma,\varphi) \) 
be an irreducible \( (2,2) \)-tight 
\( \Sigma \)-graph. The goal of this section is to show that any $(2,2)$-tight
subgraph of $G$ is also irreducible.

Let \( H = (\Lambda,\varphi|_{|\Lambda|})\) be a 
subgraph of \( G \). 
We say that \( H \) is inessential if there is some embedded open 
disc \( U \subset \Sigma \) such that \( \varphi(|\Lambda|) \subset
U\). If there is no such disc then \( H \) essential.
Observe that if \( F \) is a cellular face of \( G \) that
has a non degenerate boundary walk, then \( \partial F \)
is inessential: let \( U \) be an open disc neighbourhood of 
the embedded closed disc \( \overline F \). 
We also note that if 
\( H \) is inessential and connected 
then it
has at most one non cellular face \( F \). Moreover
if 
we cut and cap along a maximal non separating set of loops in \( F \)
we obtain an \( \mathbb S \)-graph which, in this 
section, we will denote by \(\hat H\).

Let \( K_1 \) be the graph with one vertex and no edges.
Let \( K_2 \) be the complete graph on two vertices. For 
\( n \geq 2 \) let 
\( \cyc_n \) be the \( n \)-cycle graph
(in particular \( \cyc_2 \) has exactly two parallel
edges).

\begin{lem}
    \label{lem_bicycles_are_essential}
    Suppose that \( H \) is a subgraph of \( G \) whose 
    underlying graph is isomorphic to either \( \cyc_2 \)
    or \( \cyc_3 \). Then 
    \( H \) is essential.
\end{lem}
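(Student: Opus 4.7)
The plan is to argue by contradiction: assume $H$ is inessential, so $\varphi(|H|)$ lies in an embedded open disc $U \subseteq \Sigma$. Since $|H|$ is homeomorphic to a circle for both $H\cong\cyc_2$ and $H\cong\cyc_3$, its image $\varphi(|H|)$ is a simple closed curve in $U$, and the Jordan curve theorem applied in $U\cong\mathbb R^2$ shows that it bounds a unique closed disc $\overline D\subseteq U$. Then $D$ is a cellular face of the $\Sigma$-subgraph $H$, with boundary walk the unique cycle of $H$.

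I would then split into two cases. If $\intt_G(D) = H$, then $D$ contains no vertex or edge of $G$ in its interior, so $D$ is itself a face of $G$; its boundary walk has length $2$ or $3$ with pairwise distinct vertices and edges, so $D$ is a digon (when $H\cong\cyc_2$) or a triangle (when $H\cong\cyc_3$) of $G$, contradicting irreducibility. Otherwise $\intt_G(D) \supsetneq H$: Theorem \ref{thm_hole_filling} gives $\gamma(\intt_G(D)) \leq \gamma(H) \leq 3$, while $(2,2)$-sparsity of $G$ gives $\gamma(\intt_G(D)) \geq 2$, and Lemma \ref{lem_connected} makes $\intt_G(D)$ connected. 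I would then cap $\Sigma \setminus \overline D$ by a disc glued along $\partial D$, producing a connected cellular sphere graph $\hat G$ with the same underlying graph as $\intt_G(D)$ and one extra face, the cap, of degree $|H|\in\{2,3\}$.

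Theorem \ref{thm_euler_sparsity} applied to $\hat G$ yields $2f_2+f_3 \geq 8 - 2\gamma(\hat G) \geq 2$. The cap contributes at most $2$ to $2f_2+f_3$, and a short numerical check in each of the three subcases $(|H|, \gamma(\intt_G(D))) \in \{(2,2),(3,2),(3,3)\}$ produces at least one further face of $\hat G$ of degree at most $3$ lying strictly inside $D$. Since $\intt_G(D)$ contains every vertex and edge of $G$ meeting $\overline D$, any such internal face of $\hat G$ is also a face of $G$, giving a digon or triangle of $G$ and again contradicting irreducibility. The step requiring most care is verifying that $\hat G$ is genuinely a cellular sphere graph and that its internal faces really coincide with faces of $G$; both points follow from connectedness of $\intt_G(D)$ and the containment $\intt_G(D)\subseteq\overline D$, but deserve explicit mention.
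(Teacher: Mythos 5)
Your proposal is correct and follows essentially the same route as the paper's proof: assume inessentiality, take the bounded face $D$ inside the disc, bound $\gamma(\intt_G(D))$ via Theorem \ref{thm_hole_filling}, cap to obtain a sphere graph, and use Theorem \ref{thm_euler_sparsity} to force a digon or triangle of $G$ inside $D$, contradicting irreducibility. The only difference is presentational: the paper treats the $\cyc_2$ case and declares $\cyc_3$ similar, whereas you carry out the degree/count bookkeeping for both cases explicitly.
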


\begin{proof}
    Suppose that the underlying graph of \( H \) is isomorphic to
    \( \cyc_2 \). The other case is similar.
    Suppose that \( H \) is inessential. Let \( U \) be an
    open disc that contains \( \varphi(\Lambda) \). Clearly there is 
    a digon face \( D \) of \( H \) that is contained in \( U \). 
    Now let  \( K \) be the \( \mathbb S \)-graph obtained 
    by cutting and capping the external face of \( \intt_G(D) \). 
    By Theorem \ref{thm_hole_filling}, \( \gamma(K) =2\) and 
    by Theorem \ref{thm_euler_sparsity}, \( K \) has at least two faces
    of degree at most 3. One of these faces is also a face of \( G \)
    contradicting the irreducibility of \( G \). 
\end{proof}

\begin{lem}
    \label{lem_iness_tight}
    Suppose that \(H \) is an inessential subgraph of \( G \) and that
    \( \gamma(H) = 2 \). Then the underlying graph of \( H \) is \( K_1 \).
\end{lem}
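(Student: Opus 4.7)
The plan is to argue by contradiction using the immediately preceding Lemma \ref{lem_bicycles_are_essential}. Suppose for contradiction that $|V(H)| \geq 2$.

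First I would observe that since $\gamma(H) = 2 \leq 3$, Lemma \ref{lem_connected} forces $H$ to be connected. Combined with inessentiality, the observations just before the statement of the lemma then allow us to form the sphere graph $\hat H$ by cutting and capping the (unique) non-cellular face of $H$. Since this operation preserves the underlying graph, $\hat H$ is a $(2,2)$-tight $\mathbb{S}$-graph with $|V(\hat H)| = |V(H)| \geq 2$.

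Next I would apply Theorem \ref{thm_sphere_irred} to $\hat H$ to obtain a cellular face $F$ of $\hat H$ of degree at most $3$. The condition $|V(\hat H)| \geq 2$ rules out $f_0$, while sparsity forbids loops and hence rules out $f_1$, so $F$ must be a digon or a triangle. Its boundary walk is non-degenerate in both cases: for digons this is the remark at the start of Subsection \ref{subsec_digon_contr} (the degenerate case there is explicitly the one-edge sphere graph, excluded here by $|V(\hat H)| \geq 2$), and for triangles this is simply the impossibility of loops. Consequently $\partial F$ is a subgraph of the underlying graph of $\hat H$ — equivalently of $H$, and hence of $G$ — isomorphic to $C_2$ or $C_3$.

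Finally, $\partial F \subset H$, and $H$ is contained in some open disc $U \subset \Sigma$ by the inessentiality hypothesis, so $\partial F$ is inessential. This contradicts Lemma \ref{lem_bicycles_are_essential}, which asserts that every $C_2$ or $C_3$ subgraph of the irreducible graph $G$ must be essential. Therefore $|V(H)| = 1$, and since $\gamma(H) = 2$ already forces $H$ to have no edges, we conclude $H = K_1$. The only point requiring any care is the non-degeneracy of $\partial F$, which ensures that the small face of $\hat H$ delivers an honest $C_2$ or $C_3$ subgraph rather than a degenerate walk; once this bookkeeping is in hand the contradiction with Lemma \ref{lem_bicycles_are_essential} is immediate.
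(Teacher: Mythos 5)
Your argument is essentially the paper's: cut and cap the inessential subgraph to get a $(2,2)$-tight $\mathbb S$-graph $\hat H$, use the Euler count (the paper invokes Theorem \ref{thm_euler_sparsity} directly rather than Theorem \ref{thm_sphere_irred}, but this is the same computation) to produce faces of degree at most $3$, and observe that a non-degenerate digon or triangle would yield an inessential copy of $\cyc_2$ or $\cyc_3$ inside $G$, contradicting Lemma \ref{lem_bicycles_are_essential}.

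The one step that does not hold up as written is exactly the one you flag as needing care: the exclusion of a digon face with degenerate boundary. The remark in Section \ref{subsec_digon_contr} says such a face occurs only when the graph is a single non-loop edge in the sphere, and you claim this case is "excluded here by $|V(\hat H)|\geq 2$". But the one-edge sphere graph has precisely two vertices, so the hypothesis $|V(\hat H)|\geq 2$ does not rule it out, and as stated this step fails. The gap is easily repaired: the single edge graph $K_2$ has $\gamma=3$, while $\gamma(\hat H)=\gamma(H)=2$, so $\hat H$ cannot be that graph and hence has no degenerate digon face at all. (The paper's own proof patches the same point differently: it uses the fact that there are at least \emph{two} faces of degree at most $3$, together with the observation that no $\mathbb S$-graph has more than one degenerate digon.) With either repair your proof is complete and coincides with the paper's in approach.
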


\begin{proof}
    Suppose that \( H \) has at least two vertices. Then by 
    Theorem \ref{thm_euler_sparsity}, \( \hat H \) has at least
    two faces of degree at most 3. If one of these is a triangle
    or a digon with non degenerate boundary then the underlying graph
    of \( H \) contains a copy of \( \cyc_2 \) or \( \cyc_3 \)
    which contradicts
    Lemma \ref{lem_bicycles_are_essential}.
    Therefore \( \hat H \) must have two digon faces
    both of which have degenerate boundaries. 
    However, as pointed out in Section \ref{subsec_digon_contr},
    no \( \mathbb S \)-graph can have more than one 
    degenerate digon.
\end{proof}

\begin{lem}
    \label{lem_iness_3}
    Suppose that \(H \) is an inessential subgraph of \( G \) and that
    \( \gamma(H) = 3 \). Then the underlying graph of \( H \) is \( K_2\).
\end{lem}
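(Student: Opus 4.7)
The plan is to closely mimic the proof of Lemma~\ref{lem_iness_tight}. Since Lemma~\ref{lem_connected} guarantees that $H$ is connected (because $\gamma(H)=3\leq 3$), the $\mathbb S$-graph $\hat H$ defined in the discussion preceding Lemma~\ref{lem_bicycles_are_essential} is available. I would assume for contradiction that $|V(H)|\geq 3$, since any $(2,2)$-sparse loopless graph on at most two vertices with $\gamma=3$ must have exactly two vertices joined by a single edge, i.e.\ must be $K_2$.

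Next, apply Theorem~\ref{thm_euler_sparsity} to the cellular $\mathbb S$-graph $\hat H$. Since $\hat H$ has at least two vertices we have $f_0=0$, and since $H\subset G$ inherits the absence of loops we have $f_1=0$; together with $\gamma(\hat H)=\gamma(H)=3$ this yields
\begin{equation*}
2f_2+f_3 \;=\; 2 + \sum_{i\geq 5}(i-4)f_i \;\geq\; 2.
\end{equation*}
In particular $\hat H$ possesses at least one face of degree at most three.

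I would then perform a case analysis on such a face. Any triangle of $\hat H$ has a non-degenerate boundary walk, since a repeated vertex along a length-three closed walk would force a loop in $H$; hence a triangle face contributes a $\cyc_3$ subgraph of $H$. A digon with non-degenerate boundary contributes a $\cyc_2$ subgraph of $H$. The only remaining possibility is a digon with degenerate boundary, but the observation at the end of Section~\ref{subsec_digon_contr} shows this can occur in a $(2,2)$-sparse $\mathbb S$-graph only when the graph is a single non-loop edge; this forces $|V(H)|=2$, contradicting our standing assumption.

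Thus $H$ contains a subgraph isomorphic to $\cyc_2$ or $\cyc_3$. Because any subgraph of the inessential $H$ lies in the same ambient open disc and is therefore inessential, this contradicts Lemma~\ref{lem_bicycles_are_essential}. The contradiction forces $|V(H)|\leq 2$, and combined with $\gamma(H)=3$ this yields that the underlying graph of $H$ is $K_2$. The only delicate point in the argument is the accounting of the degenerate digon case, which is handled exactly as in Lemma~\ref{lem_iness_tight}; aside from that the proof is routine.
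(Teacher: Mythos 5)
Your proof is correct and follows essentially the same route as the paper: apply Theorem \ref{thm_euler_sparsity} to $\hat H$ to get $2f_2+f_3\geq 2$, exclude triangles and non-degenerate digons via Lemma \ref{lem_bicycles_are_essential}, and resolve the remaining degenerate-digon case using the remark in Section \ref{subsec_digon_contr} that such a digon forces the graph to be a single non-loop edge. The only cosmetic difference is that you package this as a contradiction with a standing assumption $|V(H)|\geq 3$, whereas the paper concludes directly that the degenerate digon case yields $K_2$.
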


\begin{proof}
    By Theorem \ref{thm_euler_sparsity}, \( \hat H \) satisfies
    \( 2f_2 +f_3 = 2 +f_5+2f_6 +\cdots \). As in the proof 
    of Lemma \ref{lem_iness_tight} we see that \( \hat H \) cannot 
    have a triangle or a digon with non degenerate boundary. So the 
    only possibility is that \( \hat H \) has a digon face with 
    degenerate boundary. As pointed out in Section 
    \ref{subsec_digon_contr},, there is only one 
    \( \mathbb S \)-graph with a degenerate digon face and its
    underlying graph is indeed \( K_2 \). 
\end{proof}

The case of a subgraph isomorphic to \( \cyc_4 \) is a little 
more involved.

\begin{lem}
    \label{lem_4cycle_is_face}
    Suppose that \( H \) is an inessential subgraph of \( G \) whose 
    underlying graph is isomorphic to \( \cyc_4 \). Then  
    \( H \) is the boundary of
    some quadrilateral face of \( G \). 
\end{lem}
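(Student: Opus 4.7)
The plan is to prove that $K = H$ where $F_1$ is the disc face of $H$ inside the ambient disc and $K = \intt_G(F_1)$; then $F_1$ is a cellular quadrilateral face of $G$ bounded by $H$. First I would pin down $\gamma(K)$: since $H \subseteq K$, Theorem \ref{thm_hole_filling} yields $\gamma(K) \leq \gamma(H) = 4$, while sparsity gives $\gamma(K) \geq 2$. Because $K$ sits inside a closed disc it is inessential, and it has at least the four vertices of $H$, so Lemmas \ref{lem_iness_tight} and \ref{lem_iness_3} forbid $\gamma(K) \in \{2,3\}$. Hence $\gamma(K) = 4$.

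Next I would cap the unique external face of $K$ to form a sphere graph $\hat K$ with $\gamma(\hat K) = 4$. The capped face has boundary $H$, a $4$-cycle; every other face of $\hat K$ is a face of $G$, which by irreducibility of $G$ is neither a digon nor a triangle. Theorem \ref{thm_euler_sparsity} then reads $2f_2 + f_3 = \sum_{i \geq 5}(i-4)f_i$, and since $f_2 = f_3 = 0$ every $f_i$ with $i \geq 5$ vanishes too, so $\hat K$ is a sphere quadrangulation. Assume, for contradiction, that $K \neq H$; then $\hat K$ has at least five vertices. As $(2,2)$-tight graphs are connected, so is $K$, and a short Euler/degree count (if every interior vertex had degree at least four then the total degree would force every boundary vertex to have degree exactly $2$, ruling out any edge between $V(H)$ and $V(K)\setminus V(H)$ and contradicting connectedness) produces an interior vertex $w$ of $K$ with $d_{\hat K}(w) \leq 3$.

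Fix a face $Q'$ of $K$ containing $w$, write $w^*$ for the vertex of $Q'$ opposite $w$ and $a,b$ for the two remaining vertices. Since $Q'$ is a quadrilateral face of $G$, irreducibility and Lemma \ref{lem_blocked_quad} supply a type~$2$ blocker $H'$: a connected induced subgraph with $w, w^* \in V(H')$, $a,b \notin V(H')$ and $\gamma(H') = 3$. Because $w$ is interior to $F_1$, every edge of $G$ at $w$ lies in $K$, so the neighbours of $w$ in $H'$ are exactly the neighbours of $w$ in $K$ that lie in $V(H')$, and this set avoids $a$ and $b$. When $d_{\hat K}(w) = 2$ that set is empty, so $w$ is isolated in $H'$, contradicting Lemma \ref{lem_connected}.

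The hard part will be the remaining case $d_{\hat K}(w) = 3$, where $w$ has exactly one further neighbour $z$ in $K$. Applying the isolation argument to each of the three faces of $K$ meeting $w$ forces $z$ to lie in each of the three associated blockers, and the partner blockers (disjoint from these by Lemma \ref{lem_blocked_quad}) must contain the pairs of $n_i$'s in a compatible way. Combining these constraints with the topological fact from Section \ref{subsec_simple_loops}, namely that the two blockers of any blocked quadrilateral produce disjoint simple loops in $\Sigma$ of geometric intersection number one whose diagonals lie in $\overline{F_1}$, forces the corresponding walks to exit $\overline{F_1}$ through two interleaving and mutually disjoint pairs of vertices of $H$. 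The resulting combinatorial-topological incompatibility rules out $d_{\hat K}(w) = 3$ as well, completing the contradiction to $K \neq H$.
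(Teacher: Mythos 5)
Your setup is sound and close to the paper's: showing $\gamma(\intt_G(F_1))=4$, capping to get a sphere quadrangulation, and your degree-$2$ elimination (the interior vertex would be isolated inside a connected type-$2$ blocker, contradicting Lemma \ref{lem_connected}) is correct -- indeed it is the same mechanism the paper uses to exclude degree-$2$ corners of quadrilaterals. Two smaller repairs are needed: $K=\intt_G(F_1)$ is not $(2,2)$-tight (it has $\gamma(K)=4$), so ``tight graphs are connected'' does not give connectedness of $K$; you need a short separate argument (a component of $K$ missing $H$ would lie in the open disc, hence be tight and inessential, hence a single vertex by Lemma \ref{lem_iness_tight}, which is impossible since all edges of $G$ at that vertex would lie in $K$ and every vertex of a $(2,2)$-tight graph with more than one vertex has degree at least $2$). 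Connectedness is also needed before Theorem \ref{thm_euler_sparsity} can be applied to $\hat K$, which must be cellular.

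The genuine gap is the degree-$3$ case, which is exactly the hard content of the lemma, and your treatment of it is a sketch that begins with a false assertion: the extra neighbour $z$ does \emph{not} lie in all three blockers at $w$. If the neighbours of $w$ are $a,b,z$, the three faces at $w$ produce three type-$2$ blockers through $w$ excluding, respectively, $\{a,b\}$, $\{b,z\}$ and $\{z,a\}$, so they are forced to contain $z$, $a$ and $b$ respectively -- three different vertices, not one. The remaining appeals to undefined ``$n_i$'s'', ``compatible'' partner blockers, ``interleaving pairs of vertices of $H$'' and a ``combinatorial-topological incompatibility'' do not derive anything; no contradiction is actually produced. For comparison, the paper never reduces to a low-degree interior vertex: it takes an arbitrary quadrilateral face $Q$ of $G$ inside $R$, uses the two transverse loops $\alpha_1,\alpha_2$ coming from its blockers, splits $R$ along $\alpha_2$ into $R_1$ and $R_3$, decomposes the blocker as $K_e\cup K_1\cup K_3$, and uses $3=\gamma(K_e)+\gamma(K_1)+\gamma(K_3)-4$ together with Lemma \ref{lem_iness_tight} to force a corner of $Q$ onto $\partial R$; an elementary quadrangulation argument combined with the no-degree-$2$ observation then shows $R$ is itself a face. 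Unless you complete the degree-$3$ configuration with an argument of comparable substance (your topological idea that both loops of a blocked quadrilateral through $w$ must exit the disc through distinct vertices of $H$ could perhaps be developed, but it is not developed here), the proposal does not prove the lemma.
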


\begin{proof}
    Suppose that \( U \) is an embedded disc containing \( \varphi(|\Lambda|) \)
    and let \( R \) be the face of \( H \) that is contained in \( U \).
    First observe that \( \gamma(H) = 4 \), so by Theorem 
    \ref{thm_hole_filling}, \( \gamma(\intt_G(R)) \leq 4 \).
    Now, by Lemma \ref{lem_bicycles_are_essential}, 
    \(\intt_G(R)\) has no digons or triangles and it follows 
    easily from Theorem \ref{thm_euler_sparsity} that 
    \( \gamma(\intt_G(R)) = 4 \) and that 
    all the 
    cellular faces of \( \intt_G(R) \) are quadrilaterals: that is
    to say that \( \intt_G(R) \) is in fact a quadrangulation of 
    \( \overline R \). 

    Now, let \( Q \) (with boundary vertices \( v_1,v_2,v_3,v_4\ \))
    be a quadrilateral face of \( \intt_G(R) \)
    that is contained in \( R \). 
    Since \( G \) is irreducible, we have blockers \( H_1 \) and 
    \( H_2 \) for the two possible contractions of \( Q \),
    as described in Lemma \ref{lem_blocked_quad}.
    Also we have simple loops \( \alpha_1 \) and \( \alpha_2 \) 
    as described in Section \ref{sec_inductive}. These loops intersect
    transversely at one point in \( Q \). 
    If \( w_1,w_2,w_3,w_3 \) are the vertices of \( \partial R \) in 
    cyclic order,
    it follows that one of the loops, say \( \alpha_1 \), contains
    \( w_1 \) and \( w_2 \) and that \( \alpha_2 \) contains 
    \( w_2 \) and \( w_4 \). Thus \( \alpha_2 \) divides \( R \) into 
    disjoint open subsets \( R_{1} \) and \( R_{3} \) (see Figure 
    \ref{fig_quadproof})
    where \( w_1,v_1 \in \overline{R_1} \) and \( w_3,v_3 \in \overline{R_3}\).
    Now we can decompose the blocker \( H_1 \) as 
    \( K_e \cup K_1 \cup K_3  \), where 
    \( K_e =\extt_G(R)\cap H_1 \), 
    \( K_1 \) is the part of \( H_1 \)
    contained in \( \overline R_1 \) and \( K_3 \) is the part of \( H_1 \)
    contained in \( \overline R_3 \). It is clear that 
    \( K_e \cap K_1 = \{w_1\} \) and \( K_e \cap K_3 = \{w_3\} \). 
    Therefore, by (\ref{lem_incl_excl}), 
    \[ 3 = \gamma(H_1) = \gamma(K_e) + \gamma(K_1) + \gamma(K_3) - 4. \]
    Using the sparsity of \( G \) it follows that at least 
    one of \( \gamma(K_1) \) or \( \gamma(K_3) \) is equal to \( 2 \).
    Now \( K_1 \) and \( K_3 \) are both inessential subgraphs
    of \( G \) since \( \overline R_1 \) and \( \overline R_3 \) 
    are both embedded closed discs in \( \Sigma \).  
    It follows from Lemma \ref{lem_iness_tight} that 
    at least one of \( K_1 \) or \( K_3 \) is a single vertex. 
    So either \( v_1 = w_1 \) or \( v_3 = w_3 \). 
    We have shown that at least one of \( v_1 \) or \( v_3 \) 
    actually lies in the boundary of \( R \). Similarly
    at least one of \( v_2 \) or \( v_4 \) lies in the boundary 
    of \( R \). 
    
    Thus we have shown that if \( Q \) is any quadrilateral 
    face of \( G \) contained in \( R \) then \( \partial Q \) 
    and \( \partial R \) share at least one edge. Now it is 
    an elementary exercise to show that in any  quadrangulation 
    of \( R \) that has this property, either there are no
    quadrilaterals properly contained in \( R \), or 
    some quadrilateral has a boundary vertex with degree 2. 
    Clearly, by Lemma \ref{lem_blocked_quad}, no quadrilateral 
    face of the irreducible graph \( G \) can have a boundary 
    vertex of degree 2. It follows that there are no quadrilateral
    faces of \( G \) that are properly contained in \( R \)
    and so \( R \) is itself a face of \( G \).
\end{proof}

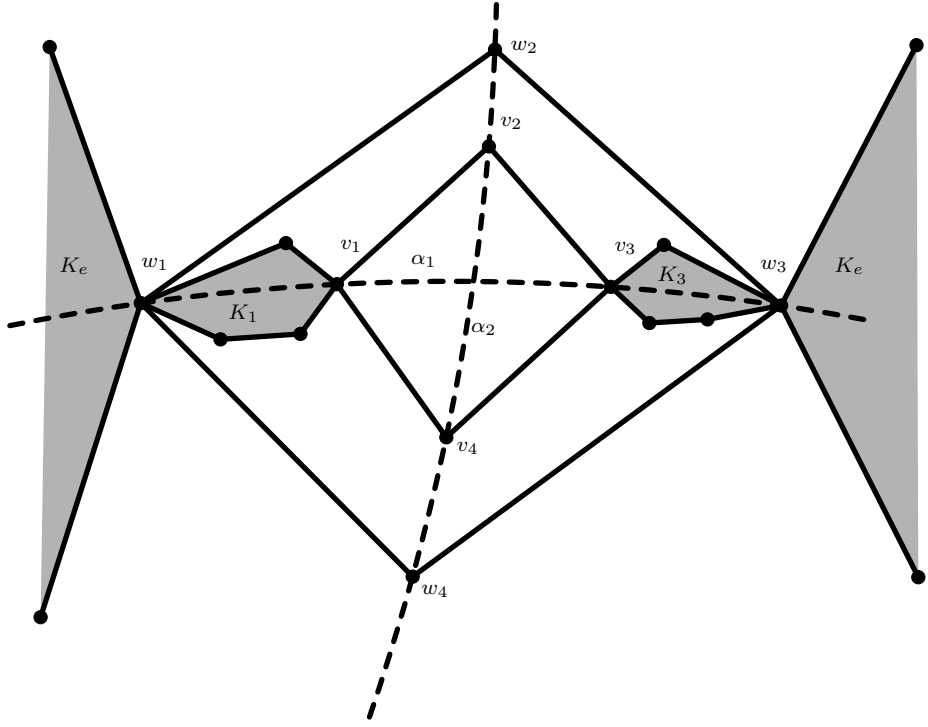
\begin{figure}
    \centering

\definecolor{zzttqq}{rgb}{0,0,0}
\definecolor{ududff}{rgb}{0,0,0}
\definecolor{xdxdff}{rgb}{0,0,0}
\def\myopac{0.3}
\begin{tikzpicture}[line cap=round,line join=round,>=triangle 45,x=1cm,y=1cm]
\fill[line width=2pt,color=zzttqq,fill=zzttqq,fill opacity=\myopac] (-3.766198255563519,1.9305569759110242) -- (-2.7066636415201377,1.4477118547929682) -- (-1.643675553034602,1.5201883153715274) -- (-1.159711850809439,2.1814620473837465) -- (-1.8369461145774268,2.7281293250141823) -- cycle;
\fill[line width=2pt,color=zzttqq,fill=zzttqq,fill opacity=\myopac] (2.4839209603739953,2.1465548802386145) -- (2.9948179239931902,1.665141236528646) -- (3.767900170164489,1.7134588769143522) -- (4.746353874433667,1.8981636352238027) -- (3.188088485536015,2.703970504821329) -- cycle;
\fill[line width=2pt,color=zzttqq,fill=zzttqq,fill opacity=\myopac] (-4.97759,5.33728) -- (-3.766198255563519,1.9305569759110242) -- (-5.098386840612593,-2.248587634713556) -- cycle;
\fill[line width=2pt,color=zzttqq,fill=zzttqq,fill opacity=\myopac] (6.5461644923425935,5.36144072603517) -- (4.746353874433667,1.8981636352238027) -- (6.5703233125354465,-1.7170935904707878) -- cycle;
\draw [dash pattern = on 5pt off 5pt, shift={(0.37829817411680644,-27.459886145466914)},line width=2pt]  plot[domain=1.3855391648521445:1.772966404807989,variable=\t]({1*29.681222978957333*cos(\t r)+0*29.681222978957333*sin(\t r)},{0*29.681222978957333*cos(\t r)+1*29.681222978957333*sin(\t r)});
\draw [dash pattern = on 5pt off 5pt, shift={(-30.71956369149556,6.655895105474076)},line width=2pt]  plot[domain=5.954392706513682:6.262920907441633,variable=\t]({1*31.691547498590143*cos(\t r)+0*31.691547498590143*sin(\t r)},{0*31.691547498590143*cos(\t r)+1*31.691547498590143*sin(\t r)});
\draw [line width=2pt] (-1.159711850809439,2.1814620473837465)-- (0.8617051700591674,4.014372826637693);
\draw [line width=2pt] (0.8617051700591674,4.014372826637693)-- (2.4839209603739953,2.1465548802386145);
\draw [line width=2pt] (2.4839209603739953,2.1465548802386145)-- (0.2958840648098757,0.14465994374911073);
\draw [line width=2pt] (0.2958840648098757,0.14465994374911073)-- (-1.159711850809439,2.1814620473837465);
\draw [line width=2pt] (-3.766198255563519,1.9305569759110242)-- (0.9430576390284813,5.30216199468775);
\draw [line width=2pt] (0.9430576390284813,5.30216199468775)-- (4.746353874433667,1.8981636352238027);
\draw [line width=2pt] (4.746353874433667,1.8981636352238027)-- (-0.15196903877681578,-1.7092910850703875);
\draw [line width=2pt] (-0.15196903877681578,-1.7092910850703875)-- (-3.766198255563519,1.9305569759110242);
\draw [line width=2pt,color=zzttqq] (-3.766198255563519,1.9305569759110242)-- (-2.7066636415201377,1.4477118547929682);
\draw [line width=2pt,color=zzttqq] (-2.7066636415201377,1.4477118547929682)-- (-1.643675553034602,1.5201883153715274);
\draw [line width=2pt,color=zzttqq] (-1.643675553034602,1.5201883153715274)-- (-1.159711850809439,2.1814620473837465);
\draw [line width=2pt,color=zzttqq] (-1.159711850809439,2.1814620473837465)-- (-1.8369461145774268,2.7281293250141823);
\draw [line width=2pt,color=zzttqq] (-1.8369461145774268,2.7281293250141823)-- (-3.766198255563519,1.9305569759110242);
\draw [line width=2pt,color=zzttqq] (2.4839209603739953,2.1465548802386145)-- (2.9948179239931902,1.665141236528646);
\draw [line width=2pt,color=zzttqq] (2.9948179239931902,1.665141236528646)-- (3.767900170164489,1.7134588769143522);
\draw [line width=2pt,color=zzttqq] (3.767900170164489,1.7134588769143522)-- (4.746353874433667,1.8981636352238027);
\draw [line width=2pt,color=zzttqq] (4.746353874433667,1.8981636352238027)-- (3.188088485536015,2.703970504821329);
\draw [line width=2pt,color=zzttqq] (3.188088485536015,2.703970504821329)-- (2.4839209603739953,2.1465548802386145);
\draw [line width=2pt,color=zzttqq] (-4.97759,5.33728)-- (-3.766198255563519,1.9305569759110242);
\draw [line width=2pt,color=zzttqq] (-3.766198255563519,1.9305569759110242)-- (-5.098386840612593,-2.248587634713556);
\draw [line width=2pt,color=zzttqq] (6.5461644923425935,5.36144072603517)-- (4.746353874433667,1.8981636352238027);
\draw [line width=2pt,color=zzttqq] (4.746353874433667,1.8981636352238027)-- (6.5703233125354465,-1.7170935904707878);
\begin{scriptsize}
\draw[color=black] (-0.0008757799205922762,2.4918910947249026) node {$\alpha_1$};
\draw[color=black] (0.7938888258650004,1.5805853658536602) node {$\alpha_2$};
\draw [fill=xdxdff] (-1.159711850809439,2.1814620473837465) circle (2.5pt);
\draw[color=xdxdff] (-0.9672285876347158,2.6918910947249026) node {$v_1$};
\draw [fill=xdxdff] (0.8617051700591674,4.014372826637693) circle (2.5pt);
\draw[color=xdxdff] (1.1621123085649435,4.3279614293817385) node {$v_2$};
\draw [fill=xdxdff] (2.4839209603739953,2.1465548802386145) circle (2.5pt);
\draw[color=xdxdff] (2.6807532614861,2.6677322745320495) node {$v_3$};
\draw [fill=xdxdff] (0.2958840648098757,0.14465994374911073) circle (2.5pt);
\draw[color=xdxdff] (0.58230062393646944,0.0025501985252424) node {$v_4$};
\draw [fill=xdxdff] (-3.766198255563519,1.9305569759110242) circle (2.5pt);
\draw[color=xdxdff] (-3.576381168462849,2.4503028927963717) node {$w_1$};
\draw [fill=xdxdff] (0.9430576390284813,5.30216199468775) circle (2.5pt);
\draw[color=xdxdff] (1.3345887691435028,5.332537719795806) node {$w_2$};
\draw [fill=xdxdff] (4.746353874433667,1.8981636352238027) circle (2.5pt);
\draw[color=xdxdff] (4.65168235961429,2.4261440726035186) node {$w_3$};
\draw [fill=xdxdff] (-0.15196903877681578,-1.7092910850703875) circle (2.5pt);
\draw[color=xdxdff] (0.147441860465113894,-1.8976789563244463) node {$w_4$};
\draw [fill=ududff] (-2.7066636415201377,1.4477118547929682) circle (2.5pt);
\draw [fill=ududff] (-1.643675553034602,1.5201883153715274) circle (2.5pt);
\draw [fill=ududff] (-1.8369461145774268,2.7281293250141823) circle (2.5pt);
\draw [fill=ududff] (2.9948179239931902,1.665141236528646) circle (2.5pt);
\draw [fill=ududff] (3.767900170164489,1.7134588769143522) circle (2.5pt);
\draw [fill=ududff] (3.188088485536015,2.703970504821329) circle (2.5pt);
\draw [fill=ududff] (-4.97759,5.33728) circle (2.5pt);
\draw [fill=ududff] (-5.098386840612593,-2.248587634713556) circle (2.5pt);
\draw [fill=ududff] (6.5461644923425935,5.36144072603517) circle (2.5pt);
\draw [fill=ududff] (6.5703233125354465,-1.7170935904707878) circle (2.5pt);
\draw[color=xdxdff] (-4.65168235961429,2.4261440726035186) node {$K_e$};
\draw[color=xdxdff] (5.65168235961429,2.4261440726035186) node {$K_e$};
\draw[color=xdxdff] (-2.4,1.8) node {$K_1$};
\draw[color=xdxdff] (3.3,2.3) node {$K_3$};
\end{scriptsize}
\end{tikzpicture}

\caption{From the proof of Lemma \ref{lem_4cycle_is_face}: the shaded 
region represents the blocker for the contraction \(G_{Q,v_1,v_3}\).}
    \label{fig_quadproof}
\end{figure}

We say that a subgraph \( H = (\Lambda,\varphi|_{|\Lambda|}) \) 
of \( G \) is annular if it is essential 
and \( \varphi(|\Lambda|) \) is contained in
some embedded open annulus of \( \Sigma\). 
Let \( \mathfrak B\) be the (unique) $(2,2)$-tight graph with 
3 vertices, one of which has degree 4.

\begin{lem}
    \label{lem_annular_quad}
    Suppose that \( H \) is a subgraph of \( G \) whose 
    underlying graph is isomorphic to \( \mathfrak B\). Then  
    \( H \) is not annular.
\end{lem}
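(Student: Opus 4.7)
Suppose for contradiction that $H\cong \mathfrak B$ is annular, contained in an open annulus $A\subset \Sigma$. Let $v$ be its degree-$4$ vertex, let $a_1,a_2$ be the parallel edges between $v$ and $u_1$, and $b_1,b_2$ those between $v$ and $u_2$; write $\gamma_i$ for the $i$-th digon loop. By Lemma~\ref{lem_bicycles_are_essential}, both $\gamma_1$ and $\gamma_2$ are essential in $\Sigma$, hence homotopic in $A$ to its core. Since the edges $b_1,b_2$ cannot cross $\gamma_1$, the vertex $u_2$ and the arcs $b_1,b_2$ lie entirely on one side of $\gamma_1$, so the rotation of $H$ at $v$ is non-interleaved. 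Tracing face walks then produces a disc face $Q$ of $H$ in $\Sigma$ whose boundary walk is $v,a_1,u_1,a_2,v,b_1,u_2,b_2,v$ --- a degenerate quadrilateral that visits $v$ twice.

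By Theorem~\ref{thm_hole_filling}, $\intt_G(Q)$ is $(2,2)$-tight. Lifting the pinched disc $\overline Q$ to a closed disc $\overline D$ by separating $v$ into two copies $v_a,v_b$ on the lifted boundary $4$-cycle $v_a u_1 v_b u_2$, then capping to a sphere, yields a sphere graph of $\gamma = 4$; its interior faces are the faces of $G$ contained in $Q$, and irreducibility of $G$ makes each of them of degree at least $4$, so Theorem~\ref{thm_euler_sparsity} forces them to be quadrilaterals. Thus $G$ quadrangulates $\overline Q$. If $\intt_G(Q) = H$, then $Q$ is itself a face of $G$ with $v_1 = v_3 = v$, and Lemma~\ref{lem_degen_quad} supplies the $(2,2)$-tight contraction $G_{Q,u_1,u_2}$, contradicting the irreducibility of $G$.

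For the remaining case I would choose $H$ to minimise $|V(\intt_G(Q))|$ over all annular $\mathfrak B$-subgraphs of $G$ and let $w$ be an interior vertex of $\intt_G(Q)$ in $Q$. If $w$ has two parallel edges to some vertex $x$, Lemma~\ref{lem_bicycles_are_essential} forces the $C_2$ on $\{w,x\}$ to be essential; but if $x$ is interior to $Q$ or $x\in\{u_1,u_2\}$ the associated loop lifts to a loop in the disc $\overline D$, hence is null-homotopic, a contradiction. So $x=v$, and the $\mathfrak B$-subgraph $H(v;u_j,w)$ is annular in $A$ with disc face $Q'$ satisfying $u_{3-j}\notin\intt_G(Q')$, giving a strictly smaller minimality parameter. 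Otherwise no interior vertex has a parallel pair of edges; in the base case $|V(\intt_G(Q))|=4$ the unique interior vertex $w$ has degree $2$ in $G$, and since parallel edges to $v$ are excluded its lifted neighbours form the opposite pair $\{u_1,u_2\}$ on the boundary $4$-cycle, making $u_1,u_2$ the only neighbours of $w$ in $G$. A blocker analysis using Lemma~\ref{lem_quad_blocker}, Lemma~\ref{lem_connected}, and the minimum-degree-$2$ property of $(2,2)$-tight subgraphs then rules out every type-$1$ and type-$2$ blocker for the contraction $G_{Q',v,w}$ at an adjacent interior quadrilateral face $Q'$ of $G$, so $G_{Q',v,w}$ is unblocked, contradicting irreducibility.

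The principal difficulty is extending this last blocker analysis to interior quadrangulations with more than one interior vertex, where an interior vertex may have additional interior neighbours escaping the above restriction; one expects the minimality of $H$, combined with iteration of the parallel-edge argument on inner sub-configurations within $\overline Q$, to always force a suitably restricted interior vertex to appear, but the required case analysis is delicate.
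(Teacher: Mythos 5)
Your opening reduction matches the paper's: you observe that the annular copy of $\mathfrak B$ has a single face $Q$ inside the annulus, that $\gamma(\intt_G(Q))=2$ by Theorem \ref{thm_hole_filling}, and that (after cutting/capping, or your lift of the pinched disc) Theorem \ref{thm_euler_sparsity} together with irreducibility forces every face of $G$ inside $Q$ to be a quadrilateral. Your treatment of the case $\intt_G(Q)=H$ is also fine: then $Q$ is a quadrilateral face of $G$ with $v_1=v_3$, and either Lemma \ref{lem_degen_quad} (your route) or Lemma \ref{lem_blocked_quad} (the paper's route) contradicts irreducibility.

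The genuine gap is the main case, where $\intt_G(Q)$ properly contains $H$, and you acknowledge it yourself in the final paragraph. Your minimality scheme does not close: an interior vertex of the quadrangulation of $\overline Q$ need not carry a parallel pair of edges at all, and once there is more than one interior vertex nothing in your argument produces either a smaller annular $\mathfrak B$ or an interior vertex with the restricted neighbour structure needed for your blocker analysis, so the induction has no step for the generic configuration. The paper closes exactly this case by re-running the argument of Lemma \ref{lem_4cycle_is_face} inside the pinched face: for each quadrilateral face $Q'$ of $G$ contained in $Q$, the two blockers give simple loops meeting transversely in one point; decomposing the inessential blocker across $\partial Q$ and applying (\ref{lem_incl_excl}) and Lemma \ref{lem_iness_tight} shows $\partial Q'$ shares an edge with $\partial Q$. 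Then the elementary fact about quadrangulations in which every internal quadrilateral meets the boundary in an edge, combined with Lemma \ref{lem_blocked_quad} (no quadrilateral face of an irreducible has a boundary vertex of degree $2$), forces $Q$ itself to be a face of $G$, reducing to the case you did handle. Without this (or an equivalent) argument for arbitrary interior quadrangulations, your proof only covers $\intt_G(Q)=H$ and the one-interior-vertex base case, so as it stands it is incomplete.
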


\begin{proof}
    Suppose, seeking a contradiction, that \( H \) is annular. 
    Let \( U \) be an open annulus containing \( \varphi(|\Lambda|) \) and 
    let \( R \) be the face of \( H \) that is contained in 
    \( U \). Observe that \( \gamma(H) = 2 \), so by 
    Theorem \ref{thm_hole_filling}, \( \gamma(\intt_G(R)) = 2 \). 
    Let \( K \) be the \( \mathbb S \)-graph obtained by 
    cutting and capping the external faces of \( \intt_G(R) \) (there could 
    be more than one in this case). Now \( K \) is a \( (2,2) \)-tight
    \( \mathbb S \)-graph with two digon faces. Since all other faces
    of \( K \) are also faces of the irreducible \( G \), it follows 
    easily from Theorem \ref{thm_euler_sparsity} that all 
    other faces of \( K \) are quadrilaterals. Thus, all faces of 
    \( G \) that are contained in \( R \) are in fact quadrilaterals.

    Now we can argue, using a straightforward
    modification of the argument from 
    the proof of Lemma \ref{lem_4cycle_is_face},
    that any quadrilateral face of \( G \) that is contained in 
    \( R \) must in fact share a boundary edge with \( R \).
    Again, following the proof of Lemma \ref{lem_4cycle_is_face}
    it follows 
    that \( R \) itself must be a face of \( G \).
    However this contradicts Lemma \ref{lem_blocked_quad} 
    where we showed that any quadrilateral face of an irreducible 
    has a non degenerate boundary.
\end{proof}

Now the main result of this section: a tight subgraph of an irreducible
is also irreducible. 

\begin{thm}
    \label{thm_tight_sub_irred}
    Suppose that \( G = (\Gamma,\varphi)\) 
    is an irreducible \( (2,2) \)-tight 
    \( \Sigma \)-graph and \( \Lambda \) is a \( (2,2) \)-tight
    subgraph of \( \Gamma \). Then \( H = (\Lambda,\varphi|_{|\Lambda|}) \)
    is an irreducible \( \Sigma \)-graph.
\end{thm}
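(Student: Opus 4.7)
The plan is to assume for contradiction that $H$ is not irreducible and to pull the offending substructure back to a forbidden feature of $G$ itself. The failure of irreducibility of $H$ must come from one of three sources: a cellular digon face, a cellular triangle face, or a cellular quadrilateral face $Q$ of $H$ for which some contraction $H_{Q,v_1,v_3}$ is $(2,2)$-sparse. In each case the idea is to view the boundary walk as a subgraph of $G$ and invoke the structural constraints on subgraphs of an irreducible graph given in Lemmas~\ref{lem_bicycles_are_essential}, \ref{lem_iness_tight}, \ref{lem_4cycle_is_face} and \ref{lem_annular_quad}.

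The digon and triangle cases are immediate. Since $H$ is $(2,2)$-tight it has no loops, so the boundary walk $\partial F$ of such a face $F$ is a non-degenerate cycle \( \cyc_2\) or \( \cyc_3\). The closed face $\overline F$ is a closed disc embedded in $\Sigma$ and any slightly larger embedded open disc neighbourhood of $\overline F$ certifies that $\partial F$ is an inessential subgraph of $G$, contradicting Lemma~\ref{lem_bicycles_are_essential}.

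For a quadrilateral face $Q$ of $H$ with boundary walk \( v_1,e_1,v_2,e_2,v_3,e_3,v_4,e_4,v_1\), Lemmas~\ref{lem_quad_nondegen_1}, \ref{lem_orient_no_repeat_edges} and \ref{lem_degen_quad} applied inside the $(2,2)$-tight orientable $\Sigma$-graph $H$ leave only two geometries for $\partial Q$. If $\partial Q$ is degenerate, say $v_1=v_3$ and $v_2\neq v_4$, then $\partial Q\cong \mathfrak B$ with $\gamma(\partial Q)=2$. A CW-complex computation yields $\chi(\overline Q)=0$, so a regular neighbourhood of $\overline Q$ in the orientable surface $\Sigma$ is an annulus; in particular $\partial Q$ is contained in an embedded open annulus. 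If $\partial Q$ is essential this makes it annular, contradicting Lemma~\ref{lem_annular_quad}; if it is inessential, Lemma~\ref{lem_iness_tight} forces $\partial Q\cong K_1$, which is absurd.

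In the non-degenerate subcase $\partial Q\cong \cyc_4$ and $\overline Q$ is a closed disc, so $\partial Q$ is an inessential \( \cyc_4\) subgraph of $G$. Lemma~\ref{lem_4cycle_is_face}, whose proof actually identifies the face of $H$ bounded by $\partial Q$ with a face of $G$, places $Q$ among the quadrilateral faces of $G$. Irreducibility of $G$ together with Lemma~\ref{lem_blocked_quad} then supplies a type 2 blocker $K$ in $G$ for $G_{Q,v_1,v_3}$ with $\gamma(K)=3$, $v_1,v_3\in K$ and $v_2,v_4\notin K$. Combining (\ref{lem_incl_excl}) with the $(2,2)$-sparsity of $G$ applied to the nonempty subgraph $K\cup H$ gives
\[ \gamma(K\cap H) \;=\; \gamma(K)+\gamma(H)-\gamma(K\cup H) \;\leq\; 3+2-2 \;=\; 3. \]
Because $v_2,v_4\notin K$ also forces $e_1,e_3\notin K\cap H$, the image of the nonempty subgraph $K\cap H$ in $H_{Q,v_1,v_3}$ is obtained simply by identifying $v_1$ with $v_3$ without deleting any edges, and hence has $\gamma\leq 1$. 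This contradicts the assumed $(2,2)$-sparsity of $H_{Q,v_1,v_3}$. The main obstacle is precisely this quadrilateral case: one has to recognise that a quadrilateral face of $H$ is already a face of $G$, and then observe that the inclusion-exclusion bound converts any $G$-blocker into a $H$-certificate of non-sparsity.
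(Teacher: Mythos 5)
Your proof is correct, and up to the point where the quadrilateral face \(Q\) of \(H\) is identified with a face of \(G\) it follows the paper's route: digons and triangles are excluded via Lemma~\ref{lem_bicycles_are_essential}, a degenerate quadrilateral boundary is reduced to a copy of \(\mathfrak B\) and excluded by the inessential/annular dichotomy (Lemmas~\ref{lem_iness_tight} and \ref{lem_annular_quad}; your regular-neighbourhood computation showing the essential case is annular is actually more explicit than the paper's corresponding remark), and Lemma~\ref{lem_4cycle_is_face} promotes \(Q\) to a face of \(G\). Where you genuinely diverge is the finishing step. The paper keeps both blockers \(H_1,H_2\) of Lemma~\ref{lem_blocked_quad} in play: it forms the tight graph \(H_1\cup H_2\cup\partial Q\), applies Lemma~\ref{lem_int_unions} and inclusion--exclusion to deduce \(\gamma(H_1\cap H)=\gamma(H_2\cap H)=3\), and then recognises these intersections as blockers for the two contractions of \(Q\) inside \(H\). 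You instead argue by contradiction with a single blocker \(K\): from \(\gamma(K\cap H)\le 3\), \(v_1,v_3\in K\cap H\) and \(e_1,e_3\notin K\), you push \(K\cap H\) forward into the assumed-sparse \(H_{Q,v_1,v_3}\) and obtain a nonempty subgraph with \(\gamma\le 1\), which is a valid and leaner count: it needs neither \(H_1\cap H_2=\emptyset\), nor Lemma~\ref{lem_int_unions}, nor the exact value \(3\) (indeed it would go through even with a type~1 blocker, since at most one deleted edge can lie in \(K\cap H\)). What the paper's longer computation buys is slightly more structural information -- explicit type~2 blockers inside \(H\) for both contractions -- but for the statement of Theorem~\ref{thm_tight_sub_irred} your argument suffices.
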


\begin{proof}
    We see that 
    \( H \) cannot have any triangle or digon, since
    the boundary of such a face would contradict Lemma 
    \ref{lem_bicycles_are_essential}.
    Now suppose that \( Q \) is a quadrilateral face 
    of \( H \). 
    It is not clear, a priori, that 
    the boundary of \( Q \) is non degenerate, so we must 
    prove that before proceeding.
    
    Applying Lemma \ref{lem_orient_no_repeat_edges}
    to \( H \), we see that there are no repeated edges
    in the boundary of \( Q \). Thus the only possibility for 
    a degenerate boundary is that one vertex is repeated and 
    that \( \partial Q \) has underlying graph isomorphic to 
    \( \mathfrak B \). 
    If \( \partial Q \) is inessential then, since \( \mathfrak B \)
    contains a copy of \( \cyc_2 \), this contradicts Lemma
    \ref{lem_bicycles_are_essential}. On the other hand, if \( \partial Q \)
    is inessential then it must be annular and this contradicts Lemma
    \ref{lem_annular_quad}.
    Thus we see that in fact 
    \( Q \) must have a non degenerate boundary.

    By Lemma \ref{lem_4cycle_is_face} this means that \( Q \) is
    also a face of \( G \) and so there are blockers \( H_1,H_2 \)
    as described by Lemma \ref{lem_blocked_quad}.
    Now consider the \( \Sigma \)-graph 
    \( K = H_1 \cup H_2 \cup \partial Q\). This is \( (2,2) \)-tight, 
    so, by Lemma \ref{lem_int_unions}, \( K\cap H \) is also 
    \( (2,2) \)-tight. Now, \( K \cap H = (H_1 \cap H) \cup (H_2 \cap H) 
    \cup \partial Q\). Using (\ref{lem_incl_excl}),
    \( H_1 \cap H_2 = \emptyset \), \( H_1 \cap H \cap \partial Q 
    = \{v_1,v_3\}\) and \( H_2 \cap H \cap \partial Q = \{v_2,v_4\} \),
    we have
    \begin{eqnarray*}
        2 &=& \gamma(K\cap H) \\
        &=& \gamma(\partial Q) +\gamma(H_1 \cap H) +\gamma(H_2 \cap H) - 
        \gamma(H_1\cap H \cap \partial Q) - \gamma(H_2\cap H \cap \partial Q)\\
        &=& 4+ \gamma(H_1\cap H) + \gamma(H_2 \cap H) -4 - 4. \\
    \end{eqnarray*}
    Thus \( \gamma(H_1 \cap H) +\gamma(H_2 \cap H) = 6 \). 
    If \( \gamma(H_1 \cap H) =2 \) then \( (H_1\cap H) \cup
    \{v_2\}\cup\{e_1,e_2\}\) would be a type 1 blocker for
    the contraction \( G_{Q,v_1,v_3} \), contradicting 
    Lemma \ref{lem_blocked_quad}. So \( \gamma(H_1 \cap H) \geq 3 \)
    and similarly \( \gamma(H_2 \cap H) \geq 3 \). It follows that 
    \( \gamma(H_1\cap H) = \gamma(H_2 \cap H) = 3\) and that 
    \( H_1 \cap H \) and \( H_2 \cap H \) are blockers for 
    the contractions \( H_{Q,v_1,v_3} \) and \( H_{Q,v_1,v_3} \)
    respectively. Thus both possible contractions of \( Q \) 
    are blocked in \( H \) as required.
\end{proof}

For example, suppose that \( \Gamma \) is the simple \( (2,2) \)-tight 
graph obtained 
by adding a vertex of degree two to \( K_4 \). Is it 
possible to embed \( \Gamma \) into the torus to create
an irreducible torus graph? If so, how many nonisomorphic embeddings exist?
There are several possible embeddings of \( \Gamma \)to consider, however we can 
significantly narrow the search space by observing that since \( K_4 \)
is tight, by Theorem \ref{thm_tight_sub_irred}, any irreducible 
embedding of \( \Gamma \) must extend an irreducible embedding of 
\( K_4 \). 
It is not difficult to show that, up to isomorphism there is a unique
irreducible embedding of \( K_4 \) in the torus  (see figure \ref{fig_torus4}).
Thus any irreducible embedding of $\Gamma$ must restrict to this 
embedding of $K_4$. Using this observation, it is not difficult to show that, up to 
isomorphism there are exactly two distinct irreducible torus embeddings of 
$\Gamma$.

\section{Irreducible torus graphs}
\label{sec_irred_torus}

Let \( \mathbb T = S^1 \times S^1 \) be the torus. 
Throughout this 
section let \( G = (\Gamma,\varphi) \) be an irreducible \( (2,2) \)-tight 
\( \mathbb T \)-graph. 
Our goal in this section is to show that there are only 
finitely many isomorphism classes of such graphs by 
establishing an upper bound for the number of vertices of \( G \).

In the case that \( G \) is not cellular we will see that we 
can essentially reduce the problem to the sphere or the annulus.
If \( G \) is cellular then 
using Theorem \ref{thm_euler_sparsity} and \( f_2 = f_3 = 0 \) 
we see that \( G \)
satisfies \( f_5 + 2f_6 + 3f_7 +4f_8 = 4 \) and \( f_i = 0\) for \( 
i\geq 9\). Since \( |V| = 2+\sum_{i \geq 2} f_i  \), the problem reduces 
to establishing a bound for the 
number of quadrilateral faces that an irreducible 
\( \mathbb T \)-graph can have. 

First we deal with the non cellular case.

\begin{lem}
    \label{lem_non_cell}
    Suppose that \( G \) is not cellular. 
    Then \( \Gamma  \) is either isomorphic to \( K_1 \) or to \( \cyc_2 \). Furthermore, in the latter case, \( G \) is annular.
\end{lem}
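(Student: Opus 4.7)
The approach is to split on $|V(G)|$. If $|V(G)| = 1$ then $\Gamma \cong K_1$, since $(2,2)$-tightness forbids loops, so assume $|V(G)| \geq 2$. Because $G$ is not cellular it has a non-cellular face $F$, and the first step is to produce a simple non-separating (in $\mathbb{T}$) loop $\alpha \subset F$. The closure $\overline F$ is an orientable compact surface of genus at most one with $b \geq 1$ boundary components, and $\overline F$ is not a disk. If $\overline F$ has genus one, a handle loop of $\overline F$ lies in $F$ and is non-separating in $\mathbb{T}$. If $\overline F$ is planar with $b \geq 2$, not every boundary circle of $\overline F$ can be null-homotopic in $\mathbb{T}$ (otherwise capping them would realise a $2$-sphere as a closed and open subset of $\mathbb{T}$), so some boundary circle is essential, and a loop in $F$ parallel to it supplies the required non-separating $\alpha$.

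Having found $\alpha$, form the sphere graph $G^\alpha$ by cutting and capping along $\alpha$ as in Section \ref{subsec_simple_loops}. Then $G^\alpha$ is $(2,2)$-tight with underlying graph $\Gamma$, and any face of $G^\alpha$ other than the (at most two) new faces created by capping is a face of $G$, hence has degree at least four by irreducibility. Applying Theorem \ref{thm_euler_sparsity} to $G^\alpha$ gives $2f_2(G^\alpha) + f_3(G^\alpha) \geq 4$; since only the new faces contribute to the left side and there are at most two of them, the only way to reach $4$ is that there are exactly two new faces and both are digons. The same Euler equation then forces $f_i(G^\alpha) = 0$ for $i \geq 5$, so every remaining cellular face of $G^\alpha$ is a quadrilateral, and such a face is also a quadrilateral face of $G$.

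The crucial step is ruling out quadrilateral faces of $G$. If $Q$ were such a face, then by Lemma \ref{lem_blocked_quad} together with the construction at the end of Section \ref{subsec_simple_loops} we obtain simple non-separating loops $\alpha_1, \alpha_2$ on $\mathbb{T}$ with $i(\alpha_1, \alpha_2) = 1$, each supported on $G$ together with a diagonal of $Q$. Since $\alpha \subset F$ is disjoint from both $G$ and the face $Q$, the sets $\alpha$ and $\alpha_j$ are disjoint, giving $i(\alpha, \alpha_j) = 0$ for $j=1,2$. The transitivity of the relation ``$i(\cdot,\cdot)=0$'' on the torus then yields $i(\alpha_1, \alpha_2) = 0$, a contradiction. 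Hence $G^\alpha$ has no quadrilateral faces, so it consists of exactly two digon faces, and Euler's formula on the sphere forces $|V(G)| = |E(G)| = 2$, giving $\Gamma \cong \cyc_2$. The unique cycle of this $\cyc_2$ cannot be null-homotopic in $\mathbb{T}$, as its bounding disk would be a forbidden digon face of $G$; hence the cycle is non-separating, a tubular neighbourhood of it is an embedded annulus containing $G$, and $G$ is annular. The principal obstacles are verifying the existence of the non-separating $\alpha$ inside $F$ and correctly invoking the torus transitivity step to kill off quadrilateral faces.
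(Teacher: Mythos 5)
Your proof is correct in substance, but it takes a different route from the paper's. The paper's proof is a two-line reduction: cut (without capping) along a non-separating loop in the non-cellular face to obtain an $\mathbb{A}$-graph $\hat G$, observe that every cellular face of $\hat G$ is a face of $G$ so that $\hat G$ is an irreducible $\mathbb{A}$-graph, and then quote Theorem \ref{thm_ann_irred}. You instead cap as well, landing on the sphere, and re-run the argument directly: the Euler count of Theorem \ref{thm_euler_sparsity} forces exactly two digon caps and all other faces to be quadrilaterals of $G$, and you then kill the quadrilaterals by playing the blocker loops $\alpha_1,\alpha_2$ of Lemma \ref{lem_blocked_quad} against the essential loop $\alpha\subset F$ via the transitivity of vanishing geometric intersection number on the torus (the same device the paper reserves for Lemma \ref{lem_one_blocker_inessential}), whereas the paper's Theorem \ref{thm_ann_irred} kills them with the Jordan curve theorem in the annulus. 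In effect you inline and adapt the proof of Theorem \ref{thm_ann_irred}; the paper's version is shorter because it reuses that theorem, while yours is self-contained on the torus and avoids the case analysis of the annulus classification. Your derivation of $\Gamma\cong \cyc_2$ from the two-digon sphere graph and the annularity argument (a null-homotopic $2$-cycle would bound a digon face, contradicting irreducibility) are both fine. One technical caveat: the closure $\overline F$ of a face is in general not a compact surface with boundary (its frontier is a graph, and $\overline F$ can even be all of $\mathbb{T}$), so your opening step should be phrased in terms of the completion of $F$ obtained by cutting along its boundary walks, or argued directly (a loop in $F$ essential in $F$ but null-homotopic in $\mathbb{T}$ bounds a disc containing the connected graph, so $F$ then contains essential, hence non-separating, loops). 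The conclusion you need -- a non-separating simple loop inside the non-cellular face -- is exactly what the paper also uses, and it asserts this without proof, so this is a repairable imprecision rather than a gap.
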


\begin{proof}
    Since \( \Gamma \) is connected
    it is clear \( G \) has a single non cellular face. 
    By cutting along a non separating loop in this face we obtain 
    an \( \mathbb A \)-graph \( \hat G \). Observe that 
    any face of \( \hat G \) that is not also a face of \( G \) is 
    non cellular. It follows that \( \hat G \) is an 
    irreducible \( \mathbb A \)-graph.
    Now the conclusion follows from Theorem \ref{thm_ann_irred}.
\end{proof}

For the remainder of the section, assume that \( G \) is cellular.
Let \( Q \) be a quadrilateral face of \( G \) with boundary 
walk \( v_1,e_1,v_2,e_2,v_3,e_3,v_4,e_4,v_1 \). As described in 
Section \ref{sec_inductive}
we have blockers \( H_1 \) and \( H_2 \) and simple loops
\( \alpha_1 \) and \( \alpha_2 \) that intersect transversely at 
one point. 

\begin{lem}
    \label{lem_one_blocker_inessential}
    At least one of \( H_1 \) or \( H_2 \) 
    is an inessential subgraph of \( G \).
\end{lem}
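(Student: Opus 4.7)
The plan is to argue by contradiction: assume both \(H_1\) and \(H_2\) are essential in \(\mathbb{T}\), and derive a contradiction with the fact that \(i(\alpha_1,\alpha_2)=1\). The intuition is that the transitivity property of the geometric intersection number on the torus recorded at the end of Section \ref{subsec_simple_loops} should force \(\alpha_1\) and \(\alpha_2\) to be homotopically ``parallel'' once we pair them with essential simple loops drawn inside the disjoint blockers \(H_1\) and \(H_2\).

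The first step is an auxiliary observation: any essential connected subgraph of a torus graph contains an essential simple loop. Recalling from Lemma \ref{lem_connected} that \(H_i\) is connected (since \(\gamma(H_i)=3\)), I would fix a spanning tree of \(H_i\) and consider the fundamental simple cycles determined by the non tree edges. These cycles generate \(\pi_1(H_i)\), so their images generate the image of \(\pi_1(H_i)\) in \(\pi_1(\mathbb{T})\). If every fundamental cycle were null homotopic in \(\mathbb{T}\), this image would be trivial, and by a standard covering space argument a connected graph with trivial \(\pi_1\)-image in \(\mathbb{T}\) lies in an embedded open disc, contradicting the essentiality of \(H_i\). Hence some fundamental cycle provides an essential simple loop \(\gamma_i\) contained in \(H_i\).

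Next I would verify the following disjointness relations between the four essential simple loops \(\alpha_1,\alpha_2,\gamma_1,\gamma_2\). Since \(H_1 \cap H_2 = \emptyset\) by Lemma \ref{lem_blocked_quad}, the loops \(\gamma_1\subset H_1\) and \(\gamma_2\subset H_2\) are disjoint, so \(i(\gamma_1,\gamma_2)=0\). The loop \(\alpha_1\) is the concatenation of a simple walk in \(H_1\) with a diagonal of the face \(Q\); since \(H_2\) is disjoint from \(H_1\) and lies outside the open face \(Q\), the loop \(\alpha_1\) is disjoint from \(\gamma_2\), giving \(i(\alpha_1,\gamma_2)=0\). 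Symmetrically, \(i(\alpha_2,\gamma_1)=0\).

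Applying the torus transitivity property from Section \ref{subsec_simple_loops} twice, first to the triple \((\alpha_1,\gamma_2,\gamma_1)\) to obtain \(i(\alpha_1,\gamma_1)=0\), and then to \((\alpha_1,\gamma_1,\alpha_2)\) to obtain \(i(\alpha_1,\alpha_2)=0\), contradicts the construction of \(\alpha_1\) and \(\alpha_2\) as loops meeting transversely at exactly one point. This completes the proof. I expect the main obstacle to be the auxiliary claim in the second paragraph, which is the only point that genuinely uses low dimensional topology; the remaining steps are essentially bookkeeping once that topological fact and the torus transitivity tool are in hand.
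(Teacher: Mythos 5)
Your proof is correct and follows essentially the same route as the paper: the paper also assumes both blockers are essential, picks non-separating (essential) simple loops \( \beta_1 \subset H_1 \), \( \beta_2 \subset H_2 \), observes that disjointness forces \( i(\beta_1,\beta_2)=i(\alpha_1,\beta_2)=i(\alpha_2,\beta_1)=0 \), and applies the torus transitivity of vanishing geometric intersection number twice to contradict \( i(\alpha_1,\alpha_2)=1 \). The only difference is that you spell out, via fundamental cycles and a covering-space argument, the standard fact that an essential connected subgraph of a torus graph contains an essential simple loop, which the paper simply asserts.
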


\begin{proof}
    Suppose that both are essential. Then 
    there are non separating simple loops \( \beta_1 \) contained in 
    \( H_1 \) and \( \beta_2 \) contained in \( H_2 \).
    Now \( H_1 \cap H_2 = \emptyset \), so 
    \( i(\beta_1,\beta_2) = 0 \). However, it is also clear that 
    \( i(\alpha_1,\beta_2) = i(\alpha_2,\beta_1) = 0 \). As 
    pointed out in Section \ref{subsec_simple_loops} this implies 
    that \( i(\alpha_1,\alpha_2) = 0 \),
    contradicting the fact that 
    these curves intersect transversely at one point.
\end{proof}

For the remainder of the section, suppose that \( H_1 \) is an 
inessential blocker. By Lemma \ref{lem_iness_3}, the graph 
of \( H_1 \) is \( K_2 \).
Furthermore we will assume that 
\( H_2 \) is a maximal blocker with respect to inclusion and let
\( J \) be the face of \( H_2 \),
that contains \( v_1,v_3 \).
See Figure \ref{fig_quadstructure} 
for an illustration of these assumptions in the case where \( H_2 \)
is an essential blocker.

\begin{figure}
    \centering
\newcommand{\ew}{2pt}
\newcommand{\ra}{5pt}
\begin{tikzpicture}[x=0.6cm,y=0.6cm]
    \coordinate (v1) at (3,5);
    \coordinate (v2) at (5,7.5);
    \coordinate (v3) at (7,5);
    \coordinate (v4) at (5,2.5);

    \coordinate (A1) at (5,0);
    \coordinate (A2) at (5,10);

    \coordinate (B1) at (1.5,0);
    \coordinate (B2) at (1.5,10);

    \coordinate (C1) at (8.5,0);
    \coordinate (C2) at (8.5,10);

    \draw [line width = \ew] (v1)--(v2)--(v3)--(v4)--(v1);

    \draw [fill=black] (v1) circle (\ra);
    \draw (v1)+(-0.1,0.6) node {\relax $v_4$};
    \draw [fill=black] (v2) circle (\ra);
    \draw (v2)+(0.6,0) node {\relax $v_1$};
    \draw [fill=black] (v3) circle (\ra);
    \draw (v3)+(-0.1,0.6) node {\relax $v_2$};
    \draw [fill=black] (v4) circle (\ra);
    \draw (v4)+(-0.6,0) node {\relax $v_3$};

    \path [fill=black!20!white] (v1) .. controls +(-1,0) and +(0,-1) .. (B2) -- (0,10) -- (0,0) -- (B1) .. controls +(0,1) and +(-1,0) .. (v1);
    \path [line width=\ew, draw] (B1) .. controls +(0,1) and +(-1,0) .. (v1) .. controls +(-1,0) and +(0,-1) .. (B2); 
    \draw (0.9,6) node {\large $H_2$};

    \path [fill=black!20!white] (v3) .. controls +(1,0) and +(0,-1) .. (C2) -- (10,10) -- (10,0) -- (C1) .. controls +(0,1) and +(1,0) .. (v3);
    \path [line width=\ew, draw] (C1) .. controls +(0,1) and +(1,0) .. (v3) .. controls +(1,0) and +(0,-1) .. (C2); 

    \path [line width = \ew,draw] (v1) .. controls +(-2,0) and +(1,0.2) .. (0,4);
    \path [line width = \ew,draw] (10,4) .. controls +(-1,-0.2) and +(1,0) .. (v3);
    \draw [line width = \ew, dash pattern=on 5pt off 5pt] (v1)--(v3);
    \draw (1,4) node {\relax $\alpha_2$};

    \draw [line width = \ew, dash pattern=on 5pt off 5pt] (v4)--(v2);
    \draw [line width = \ew] (v2)--(A2);
    \draw (5.6,9.4) node {\relax $H_1$};
    \draw [line width = \ew] (v4)--(A1);
    \draw (4.53,6.1) node {\tiny $\alpha_1$};

    \draw [line width=1pt,dash pattern=on 5pt off 5pt] (0,0) rectangle (10,10);
    \path (0,-0.5) rectangle (0,-0.5);
\end{tikzpicture}

\begin{caption}{A quadrilateral face with an essential blocker. The shaded region represents the essential blocker \( H_2 \).}
    \label{fig_quadstructure}
\end{caption}
\end{figure}

\begin{lem}
    \label{lem_maximal_blocker}
    Any face of \( H_2 \) that is not \( J \) is also a face of \( G \).
\end{lem}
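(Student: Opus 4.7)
Consider $H_2^+ := H_2 \cup \intt_G(F)$. By Theorem \ref{thm_hole_filling} applied with $H = H_2$, we have $\gamma(H_2^+) \leq \gamma(H_2) = 3$. I first check that $v_1, v_3 \notin H_2^+$ while $v_2, v_4 \in H_2^+$: the latter is immediate since $v_2, v_4 \in H_2$; for the former, $v_1, v_3 \notin H_2$ by the type 2 blocker property, and since $v_1, v_3$ lie in the open face $J$ which is disjoint from $\overline F$ (because $F \neq J$ and $\partial F \subseteq H_2$ is disjoint from the open face $J$), neither $v_1$ nor $v_3$ lies in $\intt_G(F)$.

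Combined with the $(2,2)$-sparsity of $G$, this forces $\gamma(H_2^+) \in \{2,3\}$. If $\gamma(H_2^+) = 3$, then $H_2^+$ is itself a type 2 blocker for $G_{Q,v_2,v_4}$ that contains $H_2$, so the maximality assumption gives $H_2^+ = H_2$, i.e.\ $\intt_G(F) \subseteq H_2$. Since $\overline F \cap H_2 = \partial F$, this yields $\intt_G(F) = \partial F$, so $F$ contains no vertices or edges of $G$ in its interior and is therefore a face of $G$, which is the desired conclusion.

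The remaining case $\gamma(H_2^+) = 2$ must be ruled out, and for this I will compute $\gamma(H_1 \cup H_2^+ \cup \partial Q)$ via (\ref{lem_incl_excl}). By Lemma \ref{lem_iness_3} the inessential blocker $H_1$ has underlying graph $K_2$ consisting of $v_1, v_3$ together with a single edge $f$ joining them. Since $v_1, v_3 \notin H_2^+$ we have $H_1 \cap H_2^+ = \emptyset$, so $\gamma(H_1 \cup H_2^+) = 3 + 2 = 5$. Each $e_i$ of $\partial Q$ has an endpoint in $\{v_1, v_3\}$ and therefore lies in neither $H_1$ (whose only edge is $f$) nor $H_2^+$, so $(H_1 \cup H_2^+) \cap \partial Q$ consists of the four vertices of $\partial Q$ with no edges and has $\gamma = 8$. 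A further application of (\ref{lem_incl_excl}) gives $\gamma(H_1 \cup H_2^+ \cup \partial Q) = 5 + 4 - 8 = 1$, contradicting the $(2,2)$-sparsity of $G$.

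The main step is the topological verification that $v_1, v_3 \notin \overline F$, which is what unlocks the maximality of $H_2$ in the principal case; once this is pinned down the borderline case $\gamma(H_2^+) = 2$ is eliminated by a single inclusion--exclusion calculation using the already established $K_2$ structure of $H_1$.
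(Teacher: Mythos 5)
Your proof is correct and takes essentially the same route as the paper: apply Theorem \ref{thm_hole_filling} to get \( \gamma(H_2 \cup \intt_G(F)) \le 3 \), observe that \( v_1,v_3 \notin \intt_G(F) \) because \( F \neq J \), and invoke the maximality of \( H_2 \) to conclude \( \intt_G(F) \subseteq H_2 \), hence that \( F \) is a face of \( G \). Your extra inclusion--exclusion step ruling out \( \gamma(H_2 \cup \intt_G(F)) = 2 \) (which the paper leaves implicit when it simply calls this subgraph a blocker) is sound and tightens the argument rather than changing it.
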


\begin{proof}
    Suppose that \( F \neq J \) is a face of \( H_2 \). Then 
    \( \gamma(H_2 \cup \intt_G(F)) \leq \gamma(H_2) = 3 \), by Theorem
    \ref{thm_hole_filling}. Also \( v_1,v_3 \not\in \intt_G(F) \), 
    since \( F \neq J \). If follows that \( H_2 \cup \intt_G(F) \) is 
    a blocker for \( G_{Q,v_2,v_4} \) and so by the maximality of 
    \( H_2 \), \( \intt_G(F) \subset H_2 \) as required.
\end{proof}

Next we want to examine the structure of \( H_2 \). It turns out 
that there are exactly ten distinct possibilities.
If \( H_2 \) is inessential then, by Lemma \ref{lem_iness_3}
it has graph \( K_2 \) (Figure \ref{fig_iness_blocker}). 
On the other hand, if \( H_2 \) is essential 
we have the following.

\begin{lem}
    Suppose that \( H_2 \) is essential. Then it is isomorphic to 
    one of the nine torus graphs shown in Figures \ref{fig_ess_no_quad}
    and \ref{fig_ess_quad}.
\end{lem}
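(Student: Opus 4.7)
The plan is to exploit the key structural restriction from Lemma \ref{lem_maximal_blocker}: every face of $H_2$ other than $J$ is actually a face of the irreducible graph $G$, and therefore has degree at least $4$. Combined with $\gamma(H_2)=3$ and the essentiality hypothesis, this should bound $|V(H_2)|$ enough to reduce the problem to a finite enumeration.

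First I would reduce to a cellular setting. If $J$ is cellular then $H_2$ itself is cellular on $\mathbb T$; otherwise, following the construction of $\hat H$ from Section \ref{sec_subgraphs}, cut and cap $H_2$ along a maximal collection of disjoint non-separating simple loops lying in $J$ to produce a cellular graph $\widetilde{H_2}$ on a sphere or annulus with the same underlying graph and the same value of $\gamma$. In either case, the appropriate Euler-sparsity relation (Theorem \ref{thm_euler_sparsity} or its sphere/annulus analogue) combined with the observation that only the faces coming from $J$ can have degree less than $4$ yields a linear identity of the shape
\[
(4 - \deg J) \;-\; \sum_{i \geq 5}(i-4)\, f_i \;=\; -6
\]
(with the obvious modification in the lower-genus case). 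This forces both $\deg J$ and the total face count to be small. Together with $|E(H_2)| = 2|V(H_2)| - 3$, it gives an explicit bound on $|V(H_2)|$ by a small absolute constant.

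Next, for each allowed value of $|V(H_2)|$ I would list the connected $(2,2)$-sparse multigraphs with $\gamma = 3$ (connectedness from Lemma \ref{lem_connected}), and for each such underlying graph enumerate the essential embeddings in $\mathbb T$ satisfying the constraints: the embedding contains a non-separating simple loop; all faces other than a distinguished face $J$ have degree at least $4$; and the vertices $v_2, v_4$ occur on the boundary walk(s) of $J$ in a configuration compatible with the construction of the loops $\alpha_1, \alpha_2$ of Section \ref{sec_inductive}. Two embeddings are identified via $\mathbb T$-graph isomorphism. The resulting list should coincide exactly with the nine graphs in Figures \ref{fig_ess_no_quad} and \ref{fig_ess_quad}.

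The main obstacle is the enumeration in this last step. Bounding $|V(H_2)|$ via Euler's formula is routine, but determining which rotation systems yield essential embeddings with all non-$J$ faces of degree at least $4$, and then distinguishing non-isomorphic embeddings as $\mathbb T$-graphs, requires a careful case analysis. A useful technical device is to fix a non-separating simple loop $\beta \subset H_2$, cut $\mathbb T$ along $\beta$ to view $H_2$ as an annulus graph, and then classify the admissible gluings of the two boundary circles that recover an essential torus graph of the required form.
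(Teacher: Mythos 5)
Your opening moves match the paper's: cut and cap along a non-separating loop in $J$, observe via Lemma \ref{lem_maximal_blocker} that every face of $H_2$ other than $J$ is a face of the irreducible graph $G$ and so has degree at least $4$, and apply Theorem \ref{thm_euler_sparsity} to the resulting sphere graph. That does pin down the exceptional faces $J^{+},J^{-}$ (each of degree between $2$ and $4$, with $|J^{+}|+|J^{-}|\leq 6$). But the step where you claim the identity ``forces the total face count to be small'' and hence bounds $|V(H_2)|$ by an absolute constant is a genuine gap: degree-$4$ faces contribute $0$ to $\sum_i(4-i)f_i$, so the relation $2f_2+f_3=2+f_5+2f_6$ places no restriction at all on $f_4$. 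Irreducibility of $G$ does not exclude quadrilateral faces (it only requires them to be blocked), so a priori $H_2$ could contain arbitrarily many quadrilateral faces of $G$, and with them arbitrarily many vertices; your reduction to a finite enumeration never gets started.

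This is precisely where the paper has to work hardest: it proves that $H_2$ contains at most one quadrilateral face of $G$, and this is not an Euler-count argument. It passes to $G'=\partial Q\cup H_1\cup H_2$, which is $(2,2)$-tight and hence irreducible by Theorem \ref{thm_tight_sub_irred}; for a quadrilateral face $R$ inside $H_2$ it locates an inessential ($K_2$) blocker $L_1$, shows $L_1\subset H_2$, and then analyses a maximal blocker $L_2$ for the other contraction by inclusion--exclusion, concluding that every quadrilateral face inside $H_2$ must have a pair of diagonally opposite boundary vertices lying in two specific tight subgraphs $Z_2\ni v_2$ and $Z_4\ni v_4$ (each with underlying graph $K_1$ or $\cyc_2$), whence at most one such face exists. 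A further parity observation (the $3$-cycle formed by two edges of $\partial R$ and $L_1$ forces at least two odd-degree faces in the cut-and-capped sphere graph) is then needed to rule out the subcase $|J^{+}|=4,\ |J^{-}|=2$. Without an argument of this kind, bounding $|V(H_2)|$ — and hence your concluding enumeration of embeddings — is unjustified, not merely laborious. (A minor additional slip: the case ``$J$ cellular'' never occurs, since the essential loop $\alpha_1$ lies inside $J$.)
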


\begin{proof}
    Since \( \gamma(H_2) = 3 \), it is connected by Lemma \ref{lem_connected}.
    Let \( K \) be the \( \mathbb S \)-graph obtained by cutting and 
    capping \( H_2 \) along a non separating loop in \( J \). 
    Clearly \( K \)
    has two exceptional faces \( J^+ \) and \( J^- \) such that 
    all other faces of \( K \) are
    faces of \( G \) (using Lemma \ref{lem_maximal_blocker}). 
    Now, since \( J^+ \) and \( J^- \) are the only faces of 
    \( K  \) that could have degree less than 4, Theorem \ref{thm_euler_sparsity} implies that
    \( K \) satisfies 
    \begin{equation}
        \label{eq_ess_blocker}
        2f_2+f_3 = 2 +f_5+2f_6 
    \end{equation}
    and \( f_i = 0 \) for \( i \geq 7 \).
    There are two cases to consider.

    (a) There is no quadrilateral face of \( G \) in \( H_2 \). There 
    are various subcases:
    \begin{enumerate}
        \item \( |J^+| = |J^-| = 2 \). Then, from Equation \ref{eq_ess_blocker}
            we get \( f_5+2f_6 = 2 \). So either
            \( f_5=0 \) and \( f_6 = 1 \) and we have the example shown in Figure
            \ref{fig_ess_no_quad} (a), or,
            \( f_5 = 2 \) and \( f_6 = 0 \) and we have one of the examples 
            shown in Figure \ref{fig_ess_no_quad} (b) or (c).
        \item \( |J^+| = 2 \) and \( |J^-| = 3 \). Then we have \( f_5 = 1 \). There is
            one possibility: Figure \ref{fig_ess_no_quad} (d).
        \item \( |J^+| = |J^-| = 3 \). In this case, Equation \ref{eq_ess_blocker}
            implies that \( J^+ \) and \( J^- \) are the only faces of \( K \). So we have the example shown in Figure \ref{fig_ess_no_quad} (e).
        \item \( |J^+| = 2\) and \(|J^-| = 4 \). In this case, Equation \ref{eq_ess_blocker}
            implies that \( J^+ \) and \( J^- \) are the only faces of \( K \) and we have the example shown in Figure \ref{fig_ess_no_quad} (f).
    \end{enumerate}

    (b) There is some quadrilateral face of \( G \) in \( H_2 \).
    This case requires a little more effort as we must first establish that 
    there is no more than one such face. 
    Let \( G' = \partial Q \cup H_1 \cup H_2 \). Clearly \( G' \) is 
    \( (2,2) \)-tight and so by Theorem \ref{thm_tight_sub_irred} it is also 
    irreducible.

    Suppose that \( R \) is a quadrilateral face of \( G \), with 
    boundary vertices \( w_1,w_2,w_3,w_4 \), that is contained in 
    \( H_2 \) (and so is also a face of \( G' \)). 
    By Lemma \ref{lem_one_blocker_inessential} we know that 
    there is a blocker for one of the contractions of \( R \) in \( G' \)
    whose 
    graph is \( K_2 \). Without loss of generality assume that a blocker 
    \( L_1 \) for the contraction \( G'_{R,w_1,w_3} \) has graph \( K_2 \).
    Now we claim that 
    \( L_1 \subset H_2 \).
    If not then it is clear that \( L_1 \) must intersect \( H_1 \). 
    Since the vertices of \( L_1 \) are both in \( H_2 \) this contradicts
    \( H_1 \cap H_2 = \emptyset \), thus establishing our claim.

    Now consider a maximal blocker, \( L_2 \), for the contraction \( G'_{R,w_2,w_4} \). 
    We have 
    \begin{eqnarray*}
        3 &= & \gamma(L_2) \\
        &= & \gamma(L_2\cap (\partial Q \cup H_1))+ \gamma(L_2\cap H_2) - \gamma(L_2\cap (\partial Q \cup H_1) \cap H_2) \\
        &= & \gamma(L_2\cap (\partial Q \cup H_1))+ \gamma(L_2\cap H_2) - \gamma(L_2\cap \{v_2,v_4\}) \\
    \end{eqnarray*}

    Now it is clear that \( \{v_2 ,v_4\} \subset L_2 \) since 
    \( L_2 \) is connected, so we have 
    \begin{equation}
        \label{eq_caseb1}
        \gamma(L_2 \cap H_2) = 7 - \gamma(L_2\cap(\partial Q \cup H_1))
    \end{equation}
    Furthermore, it is also clear that \( L_1 \) separates \( v_2 \) from 
    \( v_4 \) in \( H_2 \), so \( L_2 \cap H_2 \) has at least two components. 
    Also \( L_2\cap (\partial Q \cup H_1) \) is a subgraph of \( \partial Q \cup H_1 \)
    that contains the vertices \( v_2,v_4 \). It follows easily that 
    \( \gamma(L_2 \cap (\partial Q \cup H_1)) \geq 3  \) with 
    equality only if \( L_2 \cap (\partial Q \cup H_1) = \partial Q \cup H_1 \).
    Therefore the only way that (\ref{eq_caseb1}) can be satisfied is that 
    \( \partial Q \cup H_1 \subset L_2 \) and \(L_2\cap H_2\) has exactly two components
    \( X_2 \ni v_2  \) and \( X_4 \ni v_4 \) such that \( \gamma(X_2) = \gamma(X_4) = 2 \).
    In particular it follows from Theorem \ref{thm_tight_sub_irred} and Lemma 
    \ref{lem_non_cell} that the underlying graph of \( X_2 \), and also of \( X_4 \), is isomorphic to 
    \( K_1 \) or \( \cyc_2 \). Now since \( L_1 \) also separates \( w_2 \)
    and \( w_4 \) in \( H_1 \) we can, without loss of generality, assume that 
    \( v_2,w_2 \in X_2 \) and \( v_4,w_4 \in X_4 \). 

    Let \( Z_2 \), respectively \( Z_4 \),  
    be the maximal \( (2,2) \)-tight subgraph of \( H_2 \) that contains 
    \( v_2 \), respectively \( v_4 \). By Lemma \ref{lem_int_unions} we 
    see that \( X_2 \subset Z_2 \) and \( X_4 \subset Z_4 \). Furthermore 
    we see that since \( Z_2 \) and \( Z_4 \) are both disjoint from \( \alpha_1 \),
    they are either annular or inessential. By Lemma \ref{lem_non_cell},  \( Z_2 \) has 
    graph \( K_1 \) (inessential case) or \( \cyc_2 \) (annular case). 
    Similar comments apply to \( Z_4 \). Now the argument in the paragraph 
    above shows that every quadrilateral face of \( H_2 \) has a boundary vertex in 
    \( Z_2 \) and a diagonally opposite vertex in \( Z_4 \). 
    It follows easily that there is at most one such quadrilateral face in \( H_2 \).

    Now we can argue as in case (a) but with the 
    proviso that there is exactly one quadrilateral face, \( R \),  of \( H_2 \) that is 
    also a face of \( G \). 
    We observe that there is a cycle of length \( 3 \) in \( H_2 \) (formed by two edges of \( \partial R \) and the inessential blocker for \( R \)) and so also 
    in \( K \). It is not hard to see that it follows that \( K \) must have at least two
    faces of odd degree: at least one on either `side' of the cycle of length 3.
    We find the following subcases.
    \begin{enumerate}
        \item \( |J^+| = |J^-| = 2 \). From Equation (\ref{eq_ess_blocker}) we 
            have \( f_5 +2f_6 = 2 \). 
            Since \( K \) has some face of odd degree
            we can rule out the possibility \( f_5 = 0, f_6 = 1 \). Therefore 
            \( f_5 =2 \) and \( f_6 = 0 \). There is only one possibility for \( H_2 \): Figure \ref{fig_ess_quad} (a).
        \item \( |J^+| =2 \) and \( |J^-| =3 \). Then, as in case (a) we have \( f_5=1 \)
            and there is one possibility: Figure \ref{fig_ess_quad} (b).
        \item \( |J^+| = |J^-| = 3 \). In this case, Equation \ref{eq_ess_blocker}
            implies that \( R\), \(J^+ \) and \( J^- \) are the only faces of \( K \): Figure \ref{fig_ess_quad} (c).
        \item \( |J^+| = 4 \) and \( |J^-|=2 \). Since \( K \) must have at least 
            two faces of odd degree, Theorem \ref{thm_euler_sparsity} would 
            imply that there is a triangle or digon in \( K \) that is also
            a face of \( G \), contradicting its irreducibility. Thus this 
            subcase cannot arise.
    \end{enumerate}
\end{proof}

\newcommand{\blockersize}{1.0in}
    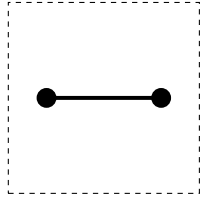
\begin{figure}
        \centering
\begin{tabular}{ccc}
\begin{tikzpicture}[x=\blockersize,y=\blockersize]
\draw[dash pattern=on 2pt off 2pt] (0,0) rectangle (1,1);
\draw [fill=black,line width=1.5pt] (0.2,0.5) circle (3pt);
\draw [fill=black,line width=1.5pt] (0.8,0.5) circle (3pt);
\draw [line width=1.5pt] (0.2,0.5)--(0.8,0.5);

\draw (0.5,-0.15) node {$$};

\end{tikzpicture}
&
\end{tabular}

        \caption{The unique inessential blocker}
        \label{fig_iness_blocker}
    \end{figure}

    \begin{figure}
        \centering
\begin{tabular}{ccc}
\begin{tikzpicture}[x=\blockersize,y=\blockersize]
\draw[dash pattern=on 2pt off 2pt] (0,0) rectangle (1,1);
\draw [fill=black,line width=1.5pt] (0.2,0.5) circle (3pt);
\draw [fill=black,line width=1.5pt] (0.8,0.5) circle (3pt);
\draw [fill=black,line width=1.5pt] (0.2,0.75) circle (3pt);
\draw [fill=black,line width=1.5pt] (0.8,0.75) circle (3pt);
\draw [line width=1.5pt] (0.2,0.5)--(0.8,0.5);
\draw [line width=1.5pt] (0.2,0.5)--(0.2,0.75);
\draw [line width=1.5pt] (0.2,0.5)--(0.2,0.0);
\draw [line width=1.5pt] (0.2,1.0)--(0.2,0.75);
\draw [line width=1.5pt] (0.8,0.5)--(0.8,0.75);
\draw [line width=1.5pt] (0.8,0.5)--(0.8,0.0);
\draw [line width=1.5pt] (0.8,1.0)--(0.8,0.75);

\draw (0.5,-0.15) node {(a)};

\end{tikzpicture}
&
\begin{tikzpicture}[x=\blockersize,y=\blockersize]
\draw[dash pattern=on 2pt off 2pt] (0,0) rectangle (1,1);
\draw [fill=black,line width=1.5pt] (0.2,0.5) circle (3pt);
\draw [fill=black,line width=1.5pt] (0.8,0.5) circle (3pt);
\draw [fill=black,line width=1.5pt] (0.2,0.75) circle (3pt);
\draw [fill=black,line width=1.5pt] (0.8,0.75) circle (3pt);
\draw [fill=black,line width=1.5pt] (0.5,0.75) circle (3pt);
\draw [line width=1.5pt] (0.2,0.5)--(0.8,0.5);
\draw [line width=1.5pt] (0.2,0.5)--(0.2,0.75);
\draw [line width=1.5pt] (0.2,0.5)--(0.2,0.0);
\draw [line width=1.5pt] (0.2,1.0)--(0.2,0.75);
\draw [line width=1.5pt] (0.8,0.5)--(0.8,0.75);
\draw [line width=1.5pt] (0.8,0.5)--(0.8,0.0);
\draw [line width=1.5pt] (0.8,1.0)--(0.8,0.75);
\draw [line width=1.5pt] (0.2,0.5)--(0.5,0.75);
\draw [line width=1.5pt] (0.5,0.75)--(0.6,1.0);
\draw [line width=1.5pt] (0.6,0.0)--(0.8,0.5);

\draw (0.5,-0.15) node {(b)};

\end{tikzpicture}
&
\begin{tikzpicture}[x=\blockersize,y=\blockersize]
\draw[dash pattern=on 2pt off 2pt] (0,0) rectangle (1,1);
\draw [fill=black,line width=1.5pt] (0.2,0.5) circle (3pt);
\draw [fill=black,line width=1.5pt] (0.8,0.5) circle (3pt);
\draw [fill=black,line width=1.5pt] (0.2,0.75) circle (3pt);
\draw [fill=black,line width=1.5pt] (0.8,0.75) circle (3pt);
\draw [fill=black,line width=1.5pt] (0.5,0.75) circle (3pt);
\draw [line width=1.5pt] (0.2,0.5)--(0.8,0.5);
\draw [line width=1.5pt] (0.2,0.5)--(0.2,0.75);
\draw [line width=1.5pt] (0.2,0.5)--(0.2,0.0);
\draw [line width=1.5pt] (0.2,1.0)--(0.2,0.75);
\draw [line width=1.5pt] (0.8,0.5)--(0.8,0.75);
\draw [line width=1.5pt] (0.8,0.5)--(0.8,0.0);
\draw [line width=1.5pt] (0.8,1.0)--(0.8,0.75);
\draw [line width=1.5pt] (0.2,0.75)--(0.5,0.75);
\draw [line width=1.5pt] (0.5,0.75)--(0.8,0.75);

\draw (0.5,-0.15) node {(c)};

\end{tikzpicture}
\\
\begin{tikzpicture}[x=\blockersize,y=\blockersize]
\draw[dash pattern=on 2pt off 2pt] (0,0) rectangle (1,1);
\draw [fill=black,line width=1.5pt] (0.2,0.5) circle (3pt);
\draw [fill=black,line width=1.5pt] (0.8,0.5) circle (3pt);
\draw [fill=black,line width=1.5pt] (0.2,0.75) circle (3pt);
\draw [fill=black,line width=1.5pt] (0.5,0.75) circle (3pt);
\draw [line width=1.5pt] (0.2,0.5)--(0.8,0.5);
\draw [line width=1.5pt] (0.2,0.5)--(0.2,0.75);
\draw [line width=1.5pt] (0.2,0.5)--(0.2,0.0);
\draw [line width=1.5pt] (0.2,1.0)--(0.2,0.75);
\draw [line width=1.5pt] (0.8,0.5)--(0.5,0.75);
\draw [line width=1.5pt] (0.2,0.5)--(0.4,0.0);
\draw [line width=1.5pt] (0.4,1.0)--(0.5,0.75);

\draw (0.5,-0.15) node {(d)};

\end{tikzpicture}
&
\begin{tikzpicture}[x=\blockersize,y=\blockersize]
\draw[dash pattern=on 2pt off 2pt] (0,0) rectangle (1,1);
\draw [fill=black,line width=1.5pt] (0.2,0.5) circle (3pt);
\draw [fill=black,line width=1.5pt] (0.8,0.5) circle (3pt);
\draw [fill=black,line width=1.5pt] (0.5,0.75) circle (3pt);
\draw [line width=1.5pt] (0.2,0.5)--(0.8,0.5);
\draw [line width=1.5pt] (0.8,0.5)--(0.5,0.75);
\draw [line width=1.5pt] (0.2,0.5)--(0.4,0.0);
\draw [line width=1.5pt] (0.4,1.0)--(0.5,0.75);

\draw (0.5,-0.15) node {(e)};

\end{tikzpicture}
&
\begin{tikzpicture}[x=\blockersize,y=\blockersize]
\draw[dash pattern=on 2pt off 2pt] (0,0) rectangle (1,1);
\draw [fill=black,line width=1.5pt] (0.2,0.5) circle (3pt);
\draw [fill=black,line width=1.5pt] (0.2,0.75) circle (3pt);
\draw [fill=black,line width=1.5pt] (0.8,0.5) circle (3pt);
\draw [line width=1.5pt] (0.2,0.5)--(0.2,0.75);
\draw [line width=1.5pt] (0.2,0.75)--(0.2,1.0);
\draw [line width=1.5pt] (0.2,0.0)--(0.2,0.5);
\draw [line width=1.5pt] (0.2,0.5)--(0.8,0.5);

\draw (0.5,-0.15) node {(f)};

\end{tikzpicture}
\\
\end{tabular}
        
        \caption{Essential blockers with no quadrilateral face}
        \label{fig_ess_no_quad}
    \end{figure}

    \begin{figure}
        \centering

\begin{tabular}{ccc}
\begin{tikzpicture}[x=\blockersize,y=\blockersize]
\draw[dash pattern=on 2pt off 2pt] (0,0) rectangle (1,1);
\draw [fill=black,line width=1.5pt] (0.25,0.5) circle (3pt);
\draw [fill=black,line width=1.5pt] (0.5,0.75) circle (3pt);
\draw [fill=black,line width=1.5pt] (0.75,0.5) circle (3pt);
\draw [fill=black,line width=1.5pt] (0.5,0.25) circle (3pt);
\draw [fill=black,line width=1.5pt] (0.25,0.75) circle (3pt);
\draw [fill=black,line width=1.5pt] (0.75,0.75) circle (3pt);
\draw [line width=1.5pt] (0.25,0.5)--(0.5,0.75);
\draw [line width=1.5pt] (0.5,0.75)--(0.75,0.5);
\draw [line width=1.5pt] (0.75,0.5)--(0.5,0.25);
\draw [line width=1.5pt] (0.5,0.25)--(0.25,0.5);
\draw [line width=1.5pt] (0.5,0.75)--(0.5,1.0);
\draw [line width=1.5pt] (0.5,0.0)--(0.5,0.25);
\draw [line width=1.5pt] (0.25,0.5)--(0.25,0.75);
\draw [line width=1.5pt] (0.25,0.5)--(0.25,0.0);
\draw [line width=1.5pt] (0.25,1.0)--(0.25,0.75);
\draw [line width=1.5pt] (0.75,0.5)--(0.75,0.75);
\draw [line width=1.5pt] (0.75,0.5)--(0.75,0.0);
\draw [line width=1.5pt] (0.75,1.0)--(0.75,0.75);

\draw (0.5,-0.15) node {(a)};

\end{tikzpicture}
&
\begin{tikzpicture}[x=\blockersize,y=\blockersize]
\draw[dash pattern=on 2pt off 2pt] (0,0) rectangle (1,1);
\draw [fill=black,line width=1.5pt] (0.25,0.5) circle (3pt);
\draw [fill=black,line width=1.5pt] (0.5,0.75) circle (3pt);
\draw [fill=black,line width=1.5pt] (0.75,0.5) circle (3pt);
\draw [fill=black,line width=1.5pt] (0.5,0.25) circle (3pt);
\draw [fill=black,line width=1.5pt] (0.25,0.75) circle (3pt);
\draw [line width=1.5pt] (0.25,0.5)--(0.5,0.75);
\draw [line width=1.5pt] (0.5,0.75)--(0.75,0.5);
\draw [line width=1.5pt] (0.75,0.5)--(0.5,0.25);
\draw [line width=1.5pt] (0.5,0.25)--(0.25,0.5);
\draw [line width=1.5pt] (0.5,0.75)--(0.5,1.0);
\draw [line width=1.5pt] (0.5,0.0)--(0.5,0.25);
\draw [line width=1.5pt] (0.25,0.5)--(0.25,0.75);
\draw [line width=1.5pt] (0.25,0.5)--(0.25,0.0);
\draw [line width=1.5pt] (0.25,1.0)--(0.25,0.75);

\draw (0.5,-0.15) node {(b)};

\end{tikzpicture}
&
\begin{tikzpicture}[x=\blockersize,y=\blockersize]
\draw[dash pattern=on 2pt off 2pt] (0,0) rectangle (1,1);
\draw [fill=black,line width=1.5pt] (0.25,0.5) circle (3pt);
\draw [fill=black,line width=1.5pt] (0.5,0.75) circle (3pt);
\draw [fill=black,line width=1.5pt] (0.75,0.5) circle (3pt);
\draw [fill=black,line width=1.5pt] (0.5,0.25) circle (3pt);
\draw [line width=1.5pt] (0.25,0.5)--(0.5,0.75);
\draw [line width=1.5pt] (0.5,0.75)--(0.75,0.5);
\draw [line width=1.5pt] (0.75,0.5)--(0.5,0.25);
\draw [line width=1.5pt] (0.5,0.25)--(0.25,0.5);
\draw [line width=1.5pt] (0.5,0.75)--(0.5,1.0);
\draw [line width=1.5pt] (0.5,0.0)--(0.5,0.25);

\draw (0.5,-0.15) node {(c)};

\end{tikzpicture}
\\
\end{tabular}

    \caption{Essential blockers with a quadrilateral face}
    \label{fig_ess_quad}
\end{figure}
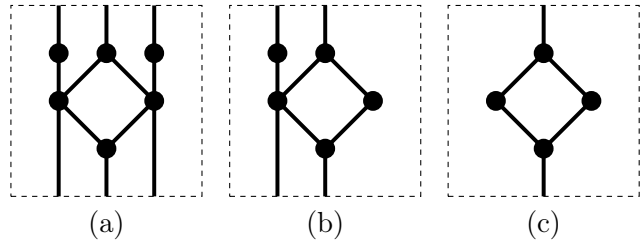

It remains to rule out the possibility of a quadrilateral 
face that is neither \( Q \) nor a face of \( H_2  \). In fact 
we can prove something a little more general than that.

\begin{lem}
    \label{lem_noquadincell}
    Let \( K \) be a \( (2,2) \)-tight subgraph of \( G \) and
    suppose that \( F \) is a cellular face of \( K \). There is 
    no quadrilateral face of \( G \) properly contained within
    \( F \).
\end{lem}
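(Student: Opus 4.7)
The plan is to argue by contradiction. Suppose that $Q$ is a quadrilateral face of $G$ properly contained in the cellular face $F$ of $K$, so that $\overline{Q}$ lies in the open face $F$. Write the boundary walk of $Q$ as $v_1, e_1, v_2, e_2, v_3, e_3, v_4, e_4, v_1$. Because $G$ is irreducible, both contractions of $Q$ are blocked, and the construction at the end of Section \ref{sec_inductive} then produces disjoint blockers $H_1, H_2$ and simple loops $\alpha_1, \alpha_2$ in $\mathbb{T}$ that intersect transversely at exactly one point; in particular, both loops are non-separating in $\mathbb{T}$ and therefore essential.

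Next I would apply the torus-specific lemmas established earlier in this section to pin down the structure of one blocker. By Lemma \ref{lem_one_blocker_inessential}, at least one of $H_1, H_2$ is inessential; without loss of generality suppose $H_1$ is. Lemma \ref{lem_iness_3} then forces the underlying graph of $H_1$ to be $K_2$, and since the blocker contains $v_1$ and $v_3$, its vertex set is exactly $\{v_1, v_3\}$ and $H_1$ consists of a single edge $e$ of $G$ joining them. Consequently $\alpha_1$ is the concatenation of the arc of $e$ with a diagonal of $Q$ from $v_1$ to $v_3$.

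The crux is then the containment $\alpha_1 \subset F$. The hypothesis $\overline{Q} \subset F$ places $v_1$ and $v_3$ in the open face $F$, so neither vertex belongs to $V(K)$. Since $\varphi$ is an embedding, the arc realising $e$ cannot cross any edge of $K$, and as its endpoints are not vertices of $K$, it is disjoint from $|K|$ altogether. A connected arc in $\Sigma \setminus |K|$ whose endpoints lie in $F$ must be contained in $F$, so $e \subset F$; the diagonal of $Q$ lies in $Q \subset F$, and hence $\alpha_1 \subset F$. Since $F$ is homeomorphic to an open disc in $\mathbb{T}$, $\alpha_1$ is null-homotopic in $\mathbb{T}$, contradicting the fact that a non-separating simple loop in an orientable surface is never null-homotopic. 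The one point requiring careful justification is the containment $e \subset F$; the remaining steps are a direct combination of Lemmas \ref{lem_one_blocker_inessential} and \ref{lem_iness_3} with standard facts about essential loops.
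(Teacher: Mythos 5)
Your proof misreads the hypothesis, and this is a genuine gap rather than a presentational one. In the lemma, ``properly contained within \( F \)'' means that the open face \( R \) of \( G \) is a subset of \( F \) while \( R \) is not itself a face of \( K \); it does \emph{not} mean \( \overline{R} \subset F \). Since \( K \) is a subgraph of \( G \), the boundary walk of such a quadrilateral may share vertices and even edges with \( K \) --- this is precisely why the paper's proof runs through the cases \( |E(\partial R \cap K)| \in \{0,1,2,4\} \), and it is the situation that actually arises in the application (Theorem \ref{thm_main_torus}), where \( K = \partial Q \cup H_1 \cup H_2 \) and a hypothetical quadrilateral face of \( G \) inside a degree-\(5\) or degree-\(8\) face of \( K \) would typically have boundary edges lying on \( K \). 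Your argument only treats the case in which the closed quadrilateral, including all four boundary vertices, lies in the open disc \( F \): only then can you conclude that the two vertices of the inessential \( K_2 \) blocker avoid \( K \), that the blocker's edge is disjoint from \( |K| \), and hence that \( \alpha_1 \subset F \) is null-homotopic. If a boundary vertex or edge of the quadrilateral lies on \( K \), the inessential blocker may contain vertices of \( K \), its edge may run along \( \partial F \) rather than through \( F \), and the loop \( \alpha_1 \) need not be contained in \( F \), so no contradiction with essentiality is obtained. Thus what you prove is strictly weaker than the lemma and does not suffice for Theorem \ref{thm_main_torus}.

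Within the special case you do treat, your argument is correct and is genuinely different from (and shorter than) the paper's: the paper never localises \( \alpha_1 \) inside a disc. Instead it first shows that each blocker must meet \( K \) (a blocker contained in the cellular face \( F \) would separate \( w_2 \) from \( w_4 \), contradicting the disjointness of the two blockers), and then applies Lemma \ref{lem_int_unions} and the count \( 2 = \gamma(M \cap K) \) with \( M = \partial R \cup B_1 \cup B_2 \) to rule out \( |E(\partial R \cap K)| \leq 1 \), \( =2 \) and \( =4 \) in turn. To repair your proof you would need an argument of this second kind for the boundary-sharing cases; the null-homotopy trick alone cannot reach them.
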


\begin{proof}
    Suppose that \( R \), with vertices 
    \( w_1,w_2,w_3,w_4 \), is a quadrilateral
    face of \( G \) properly contained within \( F \) and let 
    \( B_1 \) and \( B_2 \) be blockers for contractions of \( R \) in \( G \).
    If \( B_1 \subset F \), then since \( F \) is cellular, \( B_1 \)
    would separate \( w_2 \) from \( w_4 \) which contradicts \( B_1 
    \cap B_2 = \emptyset\). Therefore \( B_1 \ \) is not contained in 
    \( F \) or equivalently, since \( B_1 \) is connected, \( B_1
    \cap K \neq \emptyset\). Similarly \( B_2 \cap K \neq \emptyset \).

    Now, let \( M = \partial R \cup B_1 \cup B_2 \) and observe that 
    \( M \) is \( (2,2) \)-tight and therefore, by Lemma \ref{lem_int_unions},
    \( M \cap K \) is also \( (2,2) \)-tight. 
    Now it is clear that \( M \cap K = (B_1 \cap K) \cup (B_2 \cap K) \cup
    E(\partial R \cap K)\). Therefore 
    \begin{equation}
        2 = \gamma(M \cap K) = \gamma(B_1 \cap K) + \gamma(B_2 \cap K) - |E(\partial R \cap K)|
        \label{eq_noquadsincell}
    \end{equation}
    
    Now, we observe that \( |E(\partial R \cap K)| \in \{0,1,2,4\} \) since 
    \( K  \) is an induced subgraph of \( G \). 
    If \( |E(\partial R \cap K)| =4\) then clearly \( R \) must be a 
    face of \( K \) which contradicts our assumption that \( R \) is properly 
    contained within \( F \).
    On the other hand if  \( |E(\partial R \cap K)|
    \leq 1\), then (\ref{eq_noquadsincell}) yields 
    \(  \gamma(B_1 \cap K) + \gamma(B_2 \cap K)\leq 3  \) which contradicts
    the fact that both \( B_1 \cap K \) and \( B_2 \cap K \) are nonempty.
    Finally if \( |E(\partial R \cap K)| = 2 \) then it is clear that 
    \( K \) contains exactly three of the vertices \( w_1,w_2,w_3,w_4 \). 
    However in this case 
    (\ref{eq_noquadsincell}) implies that \( \gamma(B_1 \cap K)
    = \gamma(B_2 \cap K ) = 2\). It follows that \( K \) contains at most 
    one of the vertices \( w_1,w_3 \), otherwise \( (B_1 \cap K) \cup \{w_2\} \)
    would span a type 
    1 blocker for \( G_{R,w_1,w_3} \), contradicting Lemma \ref{lem_blocked_quad}.
    Similarly \( K \) contains at most one of the vertices
    \( w_2,w_4 \). Thus \( K \) contains at most two of the vertices
    \( w_1,w_2,w_3,w_4 \) yielding the required contradiction.
\end{proof}

Finally we have our main theorem about torus graphs.

\begin{thm}
    \label{thm_main_torus}
    Suppose that \( G \) is an irreducible \( (2,2) \)-tight 
    \( \mathbb T \)-graph. Then \( G \) has at most two 
    quadrilateral faces. 
\end{thm}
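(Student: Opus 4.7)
The plan is to fix a quadrilateral face \( Q \) of \( G \) with boundary walk \( v_1 e_1 v_2 e_2 v_3 e_3 v_4 e_4 v_1 \) and blockers \( H_1, H_2 \) as in Section \ref{sec_inductive}. By Lemma \ref{lem_one_blocker_inessential} together with Lemma \ref{lem_iness_3}, I may assume \( H_1 \) is the inessential blocker with underlying graph \( K_2 \), and take \( H_2 \) to be a maximal blocker. Define the \( \mathbb T \)-subgraph \( K := \partial Q \cup H_1 \cup H_2 \).

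I would first show \( K \) is \( (2,2) \)-tight and cellular. Tightness follows by inclusion-exclusion: using \( \gamma(\partial Q) = 4 \), \( \gamma(H_1) = \gamma(H_2) = 3 \), \( H_1 \cap H_2 = \emptyset \) (Lemma \ref{lem_blocked_quad}), and \( \partial Q \cap (H_1 \cup H_2) = \{v_1, v_2, v_3, v_4\} \) as four isolated vertices, one computes \( \gamma(K) = 4 + 6 - 8 = 2 \). Cellularity follows because \( K \) contains simple loops homotopic to the non-separating curves \( \alpha_1, \alpha_2 \) from Section \ref{subsec_simple_loops}, formed by concatenating paths in \( H_1, H_2 \) with arcs of \( \partial Q \); these intersect transversely at one point and hence generate \( H_1(\mathbb T) \). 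Applying Lemma \ref{lem_noquadincell} to \( K \) then forces every quadrilateral face of \( G \) to coincide with a face of \( K \), so it suffices to bound the number of quadrilateral faces of \( K \) that are also faces of \( G \).

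By Euler's formula \( K \) has exactly \( v := |V(H_2)| \) faces, with face degrees satisfying \( \sum_i (i-4) f_i^K = 4 \). By Lemma \ref{lem_maximal_blocker} every face of \( H_2 \) distinct from the distinguished face \( J \) (the one containing \( v_1, v_3 \)) is already a face of \( G \); consequently the only new face structure in \( K \) comes from how \( \partial Q \) and the edge \( H_1 \) subdivide \( J \). I would then run through the ten possibilities for \( H_2 \) established in the preceding lemma and verify case by case that this subdivision produces at most one quadrilateral face of \( K \) beyond \( Q \). When \( H_2 \cong K_2 \), \( K \) has only two faces: \( Q \) and a single octagonal face. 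For the six essential cases of Figure \ref{fig_ess_no_quad}, a direct inspection shows the only quadrilateral face of \( K \) is \( Q \). For the three essential cases of Figure \ref{fig_ess_quad}, the quadrilateral face \( R \) already contained in \( H_2 \) is also a face of \( K \) and of \( G \), and is the only additional quadrilateral of \( K \). In every case \( G \) has at most two quadrilateral faces.

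The main obstacle is the final case analysis: although each \( H_2 \) is small and explicit, verifying the face degrees of \( K \) requires tracking precisely how the two vertices \( v_1, v_3 \) and the edge \( H_1 \) sit inside the face \( J \), and ruling out additional degree-\( 4 \) faces created by the subdivision. The irreducibility of \( G \) plays the key role here, since it forbids the creation of digons or triangles in \( K \) whose interiors would then have to contain more of \( G \), constraining the admissible attaching patterns of \( \partial Q \) to \( \partial J \) in each blocker type.
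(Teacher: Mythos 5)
Your proposal is correct and follows essentially the same route as the paper: you form the \( (2,2) \)-tight subgraph \( K=\partial Q\cup H_1\cup H_2 \) from the inessential/maximal blocker pair, apply Lemma \ref{lem_noquadincell} to force every quadrilateral face of \( G \) to be a face of \( K \), and then count using the ten-case classification of \( H_2 \); the paper packages this last step as degree bounds (degree \(8\), resp.\ at least \(5\)) on the faces of \( K \) that are not faces of \( G \) rather than an explicit ten-case inspection, but the content is the same. One small slip worth noting: the two loops you build inside \( K \) (replacing the diagonals of \( Q \) by boundary arcs) share an edge of \( \partial Q \), so they do not literally intersect transversely in a single point; however they are homotopic to \( \alpha_1,\alpha_2 \) and hence still have geometric intersection number one, which is all your cellularity argument requires.
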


\begin{proof}
    Suppose, as above,  that \( Q \) is a quadrilateral face of 
    \( G \), with maximal blockers \( H_1 \) and \( H_2 \). Also 
    assume that \( H_1 \) is inessential. We have seen that 
    there is at most one other quadrilateral face of \( G \) 
    contained among faces of \( H_2 \). Now let \( K = \partial Q
    \cup H_1 \cup H_2\). Clearly \( K \) is a \( (2,2) \)-tight 
    subgraph of \( G \). 
    Now we consider the faces of \( K \) that are not also faces of 
    \( G \). If \( H_2 \) is also inessential then there is 
    at most one such face and this face is cellular of degree \(8\).
    If \( H_2 \) is essential then there at most two such faces 
    and each such face is cellular and has degree at least \(5\). 
    So by Lemma 
    \ref{lem_noquadincell}, there is no quadrilateral face of 
    \( G \) that is not also a face of \( K \).
\end{proof}

\begin{cor}
    \label{cor_main_finite}
    There are finitely many distinct isomorphism classes of irreducible \( (2,2) \)-tight 
    torus graphs. In particular any such irreducible torus graph has at most eight vertices.
\end{cor}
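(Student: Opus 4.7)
The plan is to deduce the vertex bound directly from Theorem \ref{thm_main_torus} combined with the Euler-type identity of Theorem \ref{thm_euler_sparsity}, and then obtain the finiteness statement via the Heffter-Edmonds-Ringel rotation principle recalled in Section \ref{sec_background}.

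First I would dispose of the non-cellular case: by Lemma \ref{lem_non_cell} the underlying graph of \( G \) is either \( K_1 \) or \( \cyc_2 \), so \( |V(G)| \leq 2 \) is immediate. For the rest of the argument I may therefore assume \( G \) is cellular.

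In the cellular case, specialising Theorem \ref{thm_euler_sparsity} with \( g = 1 \) and \( \gamma(G) = 2 \) gives \( \sum_{i \geq 0}(4-i)f_i = -4 \). Irreducibility rules out faces of degree \( 2 \) or \( 3 \), so \( f_2 = f_3 = 0 \); the assumption \( |V| \geq 2 \) gives \( f_0 = 0 \), and \((2,2)\)-sparsity forces \( f_1 = 0 \). The identity therefore reduces to
\[ \sum_{i \geq 5}(i-4)f_i = 4, \]
which forbids \( f_i > 0 \) for \( i \geq 9 \) and yields \( f_5 + f_6 + f_7 + f_8 \leq 4 \). Theorem \ref{thm_main_torus} supplies \( f_4 \leq 2 \), so the total face count satisfies \( |F(G)| \leq 6 \). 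Euler's formula for the torus together with \( |E(G)| = 2|V(G)| - 2 \) then gives \( |V(G)| = |F(G)| + 2 \leq 8 \).

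The finiteness claim is a standard bookkeeping step once the vertex bound is in hand. On a vertex set of size at most \( 8 \) there are only finitely many loopless multigraphs with no triple of parallel edges, hence only finitely many \((2,2)\)-tight candidates for the underlying graph. For each such graph, the rotation-system description recalled in Section \ref{sec_background} produces only finitely many non-isomorphic embeddings in \( \mathbb T \). The substantive difficulty has already been absorbed into Theorem \ref{thm_main_torus}; the only care required here is to verify that the non-cellular case is genuinely covered by Lemma \ref{lem_non_cell} and that the face-degree counts combine without double-counting \( Q \) itself.
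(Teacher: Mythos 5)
Your proposal is correct and follows essentially the same route as the paper: reduce to the cellular case via Lemma \ref{lem_non_cell}, combine Theorem \ref{thm_euler_sparsity} (giving \(f_5+2f_6+3f_7+4f_8=4\) and \(f_i=0\) for \(i\ge 9\)) with the bound \(f_4\le 2\) from Theorem \ref{thm_main_torus} to get \(|V|\le 8\), and then invoke finiteness of the underlying graphs and of their torus embeddings. Your bookkeeping via \(|V|=|F|+2\) is just a cosmetic variant of the paper's maximisation of \(\sum i f_i\) using \(|V|=1+\tfrac14\sum i f_i\).
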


\begin{proof}
    We may as well assume that \( G \) is cellular, since in the 
    non cellular case we know that \( G \) has at most two vertices.
    Since \( \gamma(G) = 2 \) we have \( |V| = 1+\frac14\sum if_i \), so we 
    must maximise \( \sum if_i \).
    Since \( G \) is irreducible, 
    \( f_i = 0  \) for \( i = 0,1,2,3 \) and \( f_4 \leq 2 \).
    From Theorem \ref{thm_euler_sparsity} we have \( 
    f_5+2f_6+3f_7+4f_8 = 4\) and \( f_i = 0\) for \( i \geq 9 \). 
    Clearly the maximum value for \( \sum  if_i \) is attained by 
    having \( f_4 = 2 \), \( f_5 = 4 \) and \( f_i = 0 \) for 
    \( i \neq 4,5 \).
    In that case \( |V|=8 \). Now there are finitely 
    many isomorphism classes of \( (2,2) \)-tight graphs with at most 
    eight vertices. Moreover, for each such graph, there are finitely 
    many isomorphism classes of torus graphs with that underlying
    graph.
\end{proof}

\subsection{Identifying irreducibles}
\label{sec_search}

Given Corollary \ref{cor_main_finite},
a naive algorithm to find all the irreducibles mentioned therein
would be 
\begin{enumerate}
    \item Find all \( (2,2) \)-tight graphs with at most 8 vertices.
    \item For each such graph, find all isomorphism classes of torus embeddings.
    \item Eliminate all embeddings that are not irreducible.
\end{enumerate}
It is impractical to carry out 
this procedure without the assistance of a computer as step (1) will already yield many thousands
of distinct graphs, each of which could have many different torus embeddings.

However, since we have 
a lot of structural information about irreducibles, we can narrow the search space 
significantly. For example, it is clear 
from the proof of Corollary \ref{cor_main_finite} that any 
irreducible with 8 vertices must have 2 quadrilateral faces, 4 faces of degree
5 and no other faces. Moreover, we know that 
each quadrilateral face has one essential blocker and one other blocker which must 
be one of the 10 graphs described in Section \ref{sec_irred_torus}. 
It is not too difficult to deduce that any 
8 vertex irreducible must be isomorphic to one of the examples shown in Figure \ref{fig_torus8}.

\begin{figure}

\begin{tabular}{ccccc}
\begin{tikzpicture}[x=2.0cm,y=2.0cm]
\draw[dash pattern=on 2pt off 2pt] (0,0) rectangle (1,1);
\draw [fill=black,line width=1.5pt] (0.1,0.5) circle (3pt);
\draw [fill=black,line width=1.5pt] (0.25,0.7) circle (3pt);
\draw [fill=black,line width=1.5pt] (0.4,0.5) circle (3pt);
\draw [fill=black,line width=1.5pt] (0.25,0.3) circle (3pt);
\draw [fill=black,line width=1.5pt] (0.75,0.3) circle (3pt);
\draw [fill=black,line width=1.5pt] (0.9,0.5) circle (3pt);
\draw [fill=black,line width=1.5pt] (0.75,0.7) circle (3pt);
\draw [fill=black,line width=1.5pt] (0.6,0.5) circle (3pt);
\draw [line width=1.5pt] (0.1,0.5)--(0.25,0.7);
\draw [line width=1.5pt] (0.25,0.7)--(0.4,0.5);
\draw [line width=1.5pt] (0.25,0.3)--(0.1,0.5);
\draw [line width=1.5pt] (0.25,0.7)--(0.25,1.0);
\draw [line width=1.5pt] (0.25,0.0)--(0.25,0.3);
\draw [line width=1.5pt] (0.4,0.5)--(0.25,0.3);
\draw [line width=1.5pt] (0.75,0.3)--(0.9,0.5);
\draw [line width=1.5pt] (0.75,0.7)--(0.75,1.0);
\draw [line width=1.5pt] (0.75,0.0)--(0.75,0.3);
\draw [line width=1.5pt] (0.75,0.7)--(0.9,0.5);
\draw [line width=1.5pt] (0.9,0.5)--(1.0,0.5);
\draw [line width=1.5pt] (0.0,0.5)--(0.1,0.5);
\draw [line width=1.5pt] (0.6,0.5)--(0.4,0.5);
\draw [line width=1.5pt] (0.6,0.5)--(0.75,0.3);
\draw [line width=1.5pt] (0.75,0.7)--(0.6,0.5);
\draw [line width=1.5pt] (0.4,0.5)--(0.5,1.0);
\draw [line width=1.5pt] (0.5,0.0)--(0.6,0.5);
\draw [line width=1.5pt] (0.9,0.5)--(1.0,1.0);
\draw [line width=1.5pt] (1.0,0.0)--(1.0,0.0);
\draw [line width=1.5pt] (0.0,0.0)--(0.1,0.5);

\end{tikzpicture}
&
\begin{tikzpicture}[x=2.0cm,y=2.0cm]
\draw[dash pattern=on 2pt off 2pt] (0,0) rectangle (1,1);
\draw [fill=black,line width=1.5pt] (0,0.5) circle (3pt);
\draw [fill=black,line width=1.5pt] (1,0.5) circle (3pt);
\draw [fill=black,line width=1.5pt] (0.5,0.5) circle (3pt);
\draw [fill=black,line width=1.5pt] (0.25,0.25) circle (3pt);
\draw [fill=black,line width=1.5pt] (0.25,0.75) circle (3pt);
\draw [fill=black,line width=1.5pt] (0.75,0.25) circle (3pt);
\draw [fill=black,line width=1.5pt] (0.75,0.75) circle (3pt);
\draw [fill=black,line width=1.5pt] (0.88,0.88) circle (3pt);
\draw [fill=black,line width=1.5pt] (0.5,0.75) circle (3pt);
\draw [line width=1.5pt] (0,0.5)--(0.25,0.25);
\draw [line width=1.5pt] (0,0.5)--(0.25,0.75);
\draw [line width=1.5pt] (0.5,0.5)--(0.25,0.25);
\draw [line width=1.5pt] (0.25,0.25)--(0.25,0.0);
\draw [line width=1.5pt] (0.25,1.0)--(0.25,0.75);
\draw [line width=1.5pt] (0.5,0.5)--(0.25,0.75);
\draw [line width=1.5pt] (0.5,0.5)--(0.75,0.25);
\draw [line width=1.5pt] (0.5,0.5)--(0.75,0.75);
\draw [line width=1.5pt] (0,0.5)--(0.0,0.5);
\draw [line width=1.5pt] (1.0,0.5)--(0.75,0.75);
\draw [line width=1.5pt] (0,0.5)--(0.0,0.5);
\draw [line width=1.5pt] (1.0,0.5)--(0.75,0.25);
\draw [line width=1.5pt] (0.75,0.25)--(0.75,0.0);
\draw [line width=1.5pt] (0.75,1.0)--(0.75,0.75);
\draw [line width=1.5pt] (0.88,0.88)--(0.75,0.75);
\draw [line width=1.5pt] (0.88,0.88)--(1.00,1.0);
\draw [line width=1.5pt] (1.00,0.0)--(1.0,0.00);
\draw [line width=1.5pt] (0.0,0.00)--(0.25,0.25);
\draw [line width=1.5pt] (0.5,0.75)--(0.25,0.75);
\draw [line width=1.5pt] (0.5,0.75)--(0.625,1.0);
\draw [line width=1.5pt] (0.625,0.0)--(0.75,0.25);

\end{tikzpicture}
&
\begin{tikzpicture}[x=2.0cm,y=2.0cm]
\draw[dash pattern=on 2pt off 2pt] (0,0) rectangle (1,1);
\draw [fill=black,line width=1.5pt] (0,0.5) circle (3pt);
\draw [fill=black,line width=1.5pt] (1,0.5) circle (3pt);
\draw [fill=black,line width=1.5pt] (0.5,0.5) circle (3pt);
\draw [fill=black,line width=1.5pt] (0.25,0.25) circle (3pt);
\draw [fill=black,line width=1.5pt] (0.25,0.75) circle (3pt);
\draw [fill=black,line width=1.5pt] (0.75,0.25) circle (3pt);
\draw [fill=black,line width=1.5pt] (0.75,0.75) circle (3pt);
\draw [fill=black,line width=1.5pt] (0.5,0.75) circle (3pt);
\draw [fill=black,line width=1.5pt] (0.88,0.88) circle (3pt);
\draw [line width=1.5pt] (0,0.5)--(0.25,0.25);
\draw [line width=1.5pt] (0,0.5)--(0.25,0.75);
\draw [line width=1.5pt] (0.5,0.5)--(0.25,0.25);
\draw [line width=1.5pt] (0.25,0.25)--(0.25,0.0);
\draw [line width=1.5pt] (0.25,1.0)--(0.25,0.75);
\draw [line width=1.5pt] (0.5,0.5)--(0.25,0.75);
\draw [line width=1.5pt] (0.5,0.5)--(0.75,0.25);
\draw [line width=1.5pt] (0.5,0.5)--(0.75,0.75);
\draw [line width=1.5pt] (0,0.5)--(0.0,0.5);
\draw [line width=1.5pt] (1.0,0.5)--(0.75,0.75);
\draw [line width=1.5pt] (0,0.5)--(0.0,0.5);
\draw [line width=1.5pt] (1.0,0.5)--(0.75,0.25);
\draw [line width=1.5pt] (0.75,0.25)--(0.75,0.0);
\draw [line width=1.5pt] (0.75,1.0)--(0.75,0.75);
\draw [line width=1.5pt] (0.5,0.75)--(0.5,0.5);
\draw [line width=1.5pt] (0.5,0.75)--(0.5,0.0);
\draw [line width=1.5pt] (0.5,1.0)--(0.5,0.5);
\draw [line width=1.5pt] (0.88,0.88)--(0.75,0.75);
\draw [line width=1.5pt] (0.88,0.88)--(1.00,1.0);
\draw [line width=1.5pt] (1.00,0.0)--(1.0,0.00);
\draw [line width=1.5pt] (0.0,0.00)--(0.25,0.25);

\end{tikzpicture}
&
\begin{tikzpicture}[x=2.0cm,y=2.0cm]
\draw[dash pattern=on 2pt off 2pt] (0,0) rectangle (1,1);
\draw [fill=black,line width=1.5pt] (0.1,0.5) circle (3pt);
\draw [fill=black,line width=1.5pt] (0.6,0.5) circle (3pt);
\draw [fill=black,line width=1.5pt] (0.35,0.25) circle (3pt);
\draw [fill=black,line width=1.5pt] (0.35,0.75) circle (3pt);
\draw [fill=black,line width=1.5pt] (0.85,0.25) circle (3pt);
\draw [fill=black,line width=1.5pt] (0.85,0.75) circle (3pt);
\draw [fill=black,line width=1.5pt] (0.6,0.75) circle (3pt);
\draw [fill=black,line width=1.5pt] (0.1,0.75) circle (3pt);
\draw [line width=1.5pt] (0.1,0.5)--(0.35,0.25);
\draw [line width=1.5pt] (0.1,0.5)--(0.35,0.75);
\draw [line width=1.5pt] (0.6,0.5)--(0.35,0.25);
\draw [line width=1.5pt] (0.35,0.25)--(0.35,0.0);
\draw [line width=1.5pt] (0.35,1.0)--(0.35,0.75);
\draw [line width=1.5pt] (0.6,0.5)--(0.35,0.75);
\draw [line width=1.5pt] (0.6,0.5)--(0.85,0.25);
\draw [line width=1.5pt] (0.6,0.5)--(0.85,0.75);
\draw [line width=1.5pt] (0.1,0.5)--(0.0,0.5999999999999999920000000000);
\draw [line width=1.5pt] (1.0,0.5999999999999999920000000000)--(0.84999999999999998,0.75);
\draw [line width=1.5pt] (0.1,0.5)--(0.0,0.4000000000000000080000000000);
\draw [line width=1.5pt] (1.0,0.4000000000000000080000000000)--(0.84999999999999998,0.25);
\draw [line width=1.5pt] (0.85,0.25)--(0.85,0.0);
\draw [line width=1.5pt] (0.85,1.0)--(0.85,0.75);
\draw [line width=1.5pt] (0.6,0.75)--(0.6,0.5);
\draw [line width=1.5pt] (0.6,0.75)--(0.6,0.0);
\draw [line width=1.5pt] (0.6,1.0)--(0.6,0.5);
\draw [line width=1.5pt] (0.1,0.75)--(0.1,0.5);
\draw [line width=1.5pt] (0.1,0.75)--(0.1,1.0);
\draw [line width=1.5pt] (0.1,0.0)--(0.1,0.5);

\end{tikzpicture}
&
\begin{tikzpicture}[x=2.0cm,y=2.0cm]
\draw[dash pattern=on 2pt off 2pt] (0,0) rectangle (1,1);
\draw [fill=black,line width=1.5pt] (0.1,0.5) circle (3pt);
\draw [fill=black,line width=1.5pt] (0.3,0.7) circle (3pt);
\draw [fill=black,line width=1.5pt] (0.5,0.5) circle (3pt);
\draw [fill=black,line width=1.5pt] (0.3,0.3) circle (3pt);
\draw [fill=black,line width=1.5pt] (0.7,0.3) circle (3pt);
\draw [fill=black,line width=1.5pt] (0.9,0.5) circle (3pt);
\draw [fill=black,line width=1.5pt] (0.7,0.7) circle (3pt);
\draw [fill=black,line width=1.5pt] (0.5,0.3) circle (3pt);
\draw [line width=1.5pt] (0.1,0.5)--(0.3,0.7);
\draw [line width=1.5pt] (0.3,0.7)--(0.5,0.5);
\draw [line width=1.5pt] (0.5,0.5)--(0.7,0.3);
\draw [line width=1.5pt] (0.3,0.3)--(0.1,0.5);
\draw [line width=1.5pt] (0.3,0.7)--(0.3,1.0);
\draw [line width=1.5pt] (0.3,0.0)--(0.3,0.3);
\draw [line width=1.5pt] (0.5,0.5)--(0.3,0.3);
\draw [line width=1.5pt] (0.7,0.3)--(0.9,0.5);
\draw [line width=1.5pt] (0.7,0.7)--(0.7,1.0);
\draw [line width=1.5pt] (0.7,0.0)--(0.7,0.3);
\draw [line width=1.5pt] (0.7,0.7)--(0.9,0.5);
\draw [line width=1.5pt] (0.7,0.7)--(0.5,0.5);
\draw [line width=1.5pt] (0.9,0.5)--(1.0,0.5);
\draw [line width=1.5pt] (0.0,0.5)--(0.1,0.5);
\draw [line width=1.5pt] (0.5,0.3)--(0.5,0.5);
\draw [line width=1.5pt] (0.5,0.3)--(0.5,0.0);
\draw [line width=1.5pt] (0.5,1.0)--(0.5,0.5);
\draw [line width=1.5pt] (0.1,0.5)--(1E-17,0.0);
\draw [line width=1.5pt] (1E-17,1.0)--(0.0,0.9999999999999999500000000000);
\draw [line width=1.5pt] (1.0,0.9999999999999999500000000000)--(0.90000000000000002,0.5);

\end{tikzpicture}
\\
\begin{tikzpicture}[x=2.0cm,y=2.0cm]
\draw[dash pattern=on 2pt off 2pt] (0,0) rectangle (1,1);
\draw [fill=black,line width=1.5pt] (0.0,0.5) circle (3pt);
\draw [fill=black,line width=1.5pt] (1.0,0.5) circle (3pt);
\draw [fill=black,line width=1.5pt] (0.2,0.7) circle (3pt);
\draw [fill=black,line width=1.5pt] (0.4,0.5) circle (3pt);
\draw [fill=black,line width=1.5pt] (0.2,0.3) circle (3pt);
\draw [fill=black,line width=1.5pt] (0.8,0.3) circle (3pt);
\draw [fill=black,line width=1.5pt] (0.6,0.5) circle (3pt);
\draw [fill=black,line width=1.5pt] (0.8,0.7) circle (3pt);
\draw [fill=black,line width=1.5pt] (0.9,0.85) circle (3pt);
\draw [line width=1.5pt] (0.0,0.5)--(0.2,0.7);
\draw [line width=1.5pt] (0.2,0.7)--(0.4,0.5);
\draw [line width=1.5pt] (0.4,0.5)--(0.2,0.3);
\draw [line width=1.5pt] (0.2,0.3)--(0.0,0.5);
\draw [line width=1.5pt] (0.2,0.7)--(0.2,1.0);
\draw [line width=1.5pt] (0.2,0.0)--(0.2,0.3);
\draw [line width=1.5pt] (0.0,0.5)--(0.0,0.5);
\draw [line width=1.5pt] (1.0,0.5)--(0.80000000000000004,0.3);
\draw [line width=1.5pt] (0.4,0.5)--(0.6,0.5);
\draw [line width=1.5pt] (0.4,0.5)--(0.5,1.0);
\draw [line width=1.5pt] (0.5,0.0)--(0.6,0.5);
\draw [line width=1.5pt] (0.8,0.3)--(0.6,0.5);
\draw [line width=1.5pt] (0.8,0.7)--(0.8,1.0);
\draw [line width=1.5pt] (0.8,0.0)--(0.8,0.3);
\draw [line width=1.5pt] (0.8,0.7)--(0.6,0.5);
\draw [line width=1.5pt] (0.8,0.7)--(1.0,0.5);
\draw [line width=1.5pt] (0.9,0.85)--(0.8,0.7);
\draw [line width=1.5pt] (0.9,0.85)--(1.0,1.0);
\draw [line width=1.5pt] (1.0,0.0)--(1.0,0.0);
\draw [line width=1.5pt] (0.0,0.0)--(0.2,0.3);

\end{tikzpicture}
&
\end{tabular}

\begin{caption}{Irreducible torus graphs with eight vertices}
    \label{fig_torus8}
\end{caption}
\end{figure}

Similarly for torus graphs with at most 4 vertices there are relatively few possibilities 
for the underlying graph: \( 13 \) in total. Now, using Lemmas \ref{lem_bicycles_are_essential},
\ref{lem_4cycle_is_face} and \ref{lem_annular_quad} we can easily deduce that 
an irreducible with at most 4 vertices is isomorphic to one of the examples 
shown in Figures \ref{fig_torusleq3} or \ref{fig_torus4}. 
For the cases of 5, 6 and 7 vertices the naive 
this approach yields a relatively manageable problem in computational 
graph theory. We have used the computer algebra system SageMath 
\cite{sagemath} to automate much of the search process in these cases. The interested 
reader can find full details of the search algorithm and its implementation at 
\cite{irreducibles}. As a result of this  computation we have the following.

\begin{thm}
    \label{thm_116}
    There are 116 distinct isomorphism classes of $(2,2)$-tight irreducible 
    torus graphs. \qed
\end{thm}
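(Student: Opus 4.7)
The plan is to use the structural results already established to reduce the classification to a finite computational search. By Corollary \ref{cor_main_finite} any irreducible $(2,2)$-tight torus graph has at most eight vertices, so for each $n \in \{1, \dots, 8\}$ I would (i) enumerate all $(2,2)$-tight multigraphs $\Gamma$ on $n$ vertices, (ii) for each such $\Gamma$ enumerate all torus embeddings up to combinatorial isomorphism via rotation systems together with the face-genus data described in Section \ref{sec_background}, restricted to those satisfying $\gamma(G) = 2$ and the Euler count of Theorem \ref{thm_euler_sparsity}, and (iii) filter for irreducibility by checking the absence of digons and triangles and, for every quadrilateral face, verifying that both diagonal contractions fail to be $(2,2)$-sparse.

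For the extreme cases the answer can be extracted without a computer. When $n \leq 4$ there are only thirteen relevant underlying graphs, and Lemmas \ref{lem_bicycles_are_essential}, \ref{lem_4cycle_is_face}, and \ref{lem_annular_quad} quickly eliminate almost all embeddings, leaving exactly the examples displayed in Figures \ref{fig_torusleq3} and \ref{fig_torus4}. When $n = 8$, the proof of Corollary \ref{cor_main_finite} forces the face vector to be $f_4 = 2, f_5 = 4$, and then Theorem \ref{thm_main_torus} together with the catalogue of ten admissible blockers developed in Section \ref{sec_irred_torus} constrains the two quadrilaterals so severely that the only possible embeddings are those of Figure \ref{fig_torus8}.

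The main obstacle is the intermediate range $n \in \{5, 6, 7\}$, where the number of candidate underlying graphs and rotation systems is too large to handle by hand but still small enough to be tractable by machine. The plan is to automate this stage in SageMath: generate the underlying $(2,2)$-tight multigraphs by an inductive Henneberg-style construction (or direct enumeration of edge multisets satisfying the sparsity counts), compute rotation systems up to a canonical form to avoid duplicates, discard those whose face walks are incompatible with a torus embedding, and then run the irreducibility filter, which is a polynomial-time check since locating blockers in a bounded-size graph is cheap. To contain the search space further one can exploit Theorem \ref{thm_tight_sub_irred}, which implies that the restriction of any candidate to a $(2,2)$-tight subgraph must already be an irreducible embedding of that subgraph; this allows a bottom-up construction starting from the smaller irreducibles found in the earlier cases.

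The delicate implementation step will be the canonical form for rotation systems, since an inefficient isomorphism test would dominate the runtime. Once each $n$ is treated and the lists concatenated, a final de-duplication across all $n$ produces the complete catalogue. Tallying the outputs, the hand cases $n \leq 4$ and $n = 8$ together with the SageMath enumeration for $n \in \{5,6,7\}$ yield exactly $116$ isomorphism classes; full details of the algorithm, its implementation, and the explicit list of graphs are recorded in \cite{irreducibles}.
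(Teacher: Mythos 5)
Your proposal follows essentially the same route as the paper: bound the vertex count by Corollary \ref{cor_main_finite}, handle $n\leq 4$ by hand via Lemmas \ref{lem_bicycles_are_essential}, \ref{lem_4cycle_is_face} and \ref{lem_annular_quad}, handle $n=8$ via the face-vector and blocker analysis leading to Figure \ref{fig_torus8}, and dispose of $n\in\{5,6,7\}$ by a SageMath search documented in \cite{irreducibles}. This matches the argument given in Section \ref{sec_search}, so the proposal is correct and not a genuinely different proof.
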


\begin{figure}

\begin{tabular}{ccccc}
\begin{tikzpicture}[x=2.0cm,y=2.0cm]
\draw[dash pattern=on 2pt off 2pt] (0,0) rectangle (1,1);
\draw [fill=black,line width=1.5pt] (0.5,0.5) circle (3pt);

\end{tikzpicture}
&
\begin{tikzpicture}[x=2.0cm,y=2.0cm]
\draw[dash pattern=on 2pt off 2pt] (0,0) rectangle (1,1);
\draw [fill=black,line width=1.5pt] (0.5,0.25) circle (3pt);
\draw [fill=black,line width=1.5pt] (0.5,0.75) circle (3pt);
\draw [line width=1.5pt] (0.5,0.25)--(0.5,0.75);
\draw [line width=1.5pt] (0.5,0.25)--(0.5,0.0);
\draw [line width=1.5pt] (0.5,1.0)--(0.5,0.75);

\end{tikzpicture}
&
\begin{tikzpicture}[x=2.0cm,y=2.0cm]
\draw[dash pattern=on 2pt off 2pt] (0,0) rectangle (1,1);
\draw [fill=black,line width=1.5pt] (0.333,0.333) circle (3pt);
\draw [fill=black,line width=1.5pt] (0.666,0.333) circle (3pt);
\draw [fill=black,line width=1.5pt] (0.333,0.666) circle (3pt);
\draw [line width=1.5pt] (0.333,0.333)--(0.666,0.333);
\draw [line width=1.5pt] (0.333,0.333)--(0.333,0.666);
\draw [line width=1.5pt] (0.333,0.333)--(0.0,0.333);
\draw [line width=1.5pt] (1.0,0.333)--(0.66600000000000004,0.333);
\draw [line width=1.5pt] (0.333,0.333)--(0.333,0.0);
\draw [line width=1.5pt] (0.333,1.0)--(0.333,0.66600000000000004);

\end{tikzpicture}
&
\begin{tikzpicture}[x=2.0cm,y=2.0cm]
\draw[dash pattern=on 2pt off 2pt] (0,0) rectangle (1,1);
\draw [fill=black,line width=1.5pt] (0.333,0.333) circle (3pt);
\draw [fill=black,line width=1.5pt] (0.333,0.666) circle (3pt);
\draw [fill=black,line width=1.5pt] (0.7,0.5) circle (3pt);
\draw [line width=1.5pt] (0.333,0.333)--(0.333,0.666);
\draw [line width=1.5pt] (0.333,0.333)--(0.7,0.5);
\draw [line width=1.5pt] (0.333,0.333)--(0.333,0.0);
\draw [line width=1.5pt] (0.333,1.0)--(0.333,0.66600000000000004);
\draw [line width=1.5pt] (0.333,0.666)--(0.0,0.5786729857819905268453089553);
\draw [line width=1.5pt] (1.0,0.5786729857819905268453089553)--(0.69999999999999996,0.5);

\end{tikzpicture}
&
\end{tabular}

\begin{caption}{Irreducible torus graphs with at most three vertices}
    \label{fig_torusleq3}
\end{caption}
\end{figure}

\begin{figure}

\begin{tabular}{ccccc}
\begin{tikzpicture}[x=2.0cm,y=2.0cm]
\draw[dash pattern=on 2pt off 2pt] (0,0) rectangle (1,1);
\draw [fill=black,line width=1.5pt] (0.25,0.5) circle (3pt);
\draw [fill=black,line width=1.5pt] (0.5,0.75) circle (3pt);
\draw [fill=black,line width=1.5pt] (0.75,0.5) circle (3pt);
\draw [fill=black,line width=1.5pt] (0.5,0.25) circle (3pt);
\draw [line width=1.5pt] (0.25,0.5)--(0.5,0.75);
\draw [line width=1.5pt] (0.5,0.75)--(0.75,0.5);
\draw [line width=1.5pt] (0.75,0.5)--(0.5,0.25);
\draw [line width=1.5pt] (0.5,0.25)--(0.25,0.5);
\draw [line width=1.5pt] (0.25,0.5)--(0.0,0.5);
\draw [line width=1.5pt] (1.0,0.5)--(0.75,0.5);
\draw [line width=1.5pt] (0.5,0.75)--(0.5,1.0);
\draw [line width=1.5pt] (0.5,0.0)--(0.5,0.25);

\end{tikzpicture}
&
\begin{tikzpicture}[x=2.0cm,y=2.0cm]
\draw[dash pattern=on 2pt off 2pt] (0,0) rectangle (1,1);
\draw [fill=black,line width=1.5pt] (0.25,0.25) circle (3pt);
\draw [fill=black,line width=1.5pt] (0.75,0.25) circle (3pt);
\draw [fill=black,line width=1.5pt] (0.25,0.75) circle (3pt);
\draw [fill=black,line width=1.5pt] (0.75,0.75) circle (3pt);
\draw [line width=1.5pt] (0.75,0.75)--(0.75,0.25);
\draw [line width=1.5pt] (0.25,0.25)--(0.25,0.75);
\draw [line width=1.5pt] (0.25,0.75)--(0.75,0.75);
\draw [line width=1.5pt] (0.25,0.25)--(0.0,0.25);
\draw [line width=1.5pt] (1.0,0.25)--(0.75,0.25);
\draw [line width=1.5pt] (0.75,0.25)--(0.75,0.0);
\draw [line width=1.5pt] (0.75,1.0)--(0.75,0.75);
\draw [line width=1.5pt] (0.25,0.25)--(0.25,0.0);
\draw [line width=1.5pt] (0.25,1.0)--(0.25,0.75);

\end{tikzpicture}
&
\begin{tikzpicture}[x=2.0cm,y=2.0cm]
\draw[dash pattern=on 2pt off 2pt] (0,0) rectangle (1,1);
\draw [fill=black,line width=1.5pt] (0.25,0.25) circle (3pt);
\draw [fill=black,line width=1.5pt] (0.75,0.25) circle (3pt);
\draw [fill=black,line width=1.5pt] (0.25,0.75) circle (3pt);
\draw [fill=black,line width=1.5pt] (0.75,0.75) circle (3pt);
\draw [line width=1.5pt] (0.25,0.25)--(0.25,0.75);
\draw [line width=1.5pt] (0.25,0.75)--(0.75,0.75);
\draw [line width=1.5pt] (0.25,0.25)--(0.75,0.25);
\draw [line width=1.5pt] (0.25,0.75)--(0.25,1.0);
\draw [line width=1.5pt] (0.25,0.0)--(0.25,0.25);
\draw [line width=1.5pt] (0.75,0.75)--(1.0,0.50);
\draw [line width=1.5pt] (0.0,0.50)--(0.25,0.25);
\draw [line width=1.5pt] (0.75,0.25)--(1.00,0.0);
\draw [line width=1.5pt] (1.00,1.0)--(1.0,1.00);
\draw [line width=1.5pt] (0.0,1.00)--(0.25,0.75);

\end{tikzpicture}
&
\begin{tikzpicture}[x=2.0cm,y=2.0cm]
\draw[dash pattern=on 2pt off 2pt] (0,0) rectangle (1,1);
\draw [fill=black,line width=1.5pt] (0.25,0.25) circle (3pt);
\draw [fill=black,line width=1.5pt] (0.75,0.25) circle (3pt);
\draw [fill=black,line width=1.5pt] (0.25,0.75) circle (3pt);
\draw [fill=black,line width=1.5pt] (0.75,0.75) circle (3pt);
\draw [line width=1.5pt] (0.25,0.25)--(0.25,0.75);
\draw [line width=1.5pt] (0.25,0.25)--(0.25,0.0);
\draw [line width=1.5pt] (0.25,1.0)--(0.25,0.75);
\draw [line width=1.5pt] (0.25,0.75)--(0.75,0.75);
\draw [line width=1.5pt] (0.75,0.25)--(0.25,0.75);
\draw [line width=1.5pt] (0.75,0.75)--(1.00,1.0);
\draw [line width=1.5pt] (1.00,0.0)--(1.0,0.00);
\draw [line width=1.5pt] (0.0,0.00)--(0.25,0.25);
\draw [line width=1.5pt] (0.75,0.25)--(1.0,0.25);
\draw [line width=1.5pt] (0.0,0.25)--(0.25,0.25);

\end{tikzpicture}
&
\begin{tikzpicture}[x=2.0cm,y=2.0cm]
\draw[dash pattern=on 2pt off 2pt] (0,0) rectangle (1,1);
\draw [fill=black,line width=1.5pt] (0.25,0.25) circle (3pt);
\draw [fill=black,line width=1.5pt] (0.75,0.25) circle (3pt);
\draw [fill=black,line width=1.5pt] (0.25,0.75) circle (3pt);
\draw [fill=black,line width=1.5pt] (0.75,0.75) circle (3pt);
\draw [line width=1.5pt] (0.25,0.25)--(0.25,0.75);
\draw [line width=1.5pt] (0.25,0.75)--(0.75,0.75);
\draw [line width=1.5pt] (0.75,0.25)--(0.75,0.75);
\draw [line width=1.5pt] (0.75,0.75)--(1.0,0.50);
\draw [line width=1.5pt] (0.0,0.50)--(0.25,0.25);
\draw [line width=1.5pt] (0.25,0.75)--(0.25,1.0);
\draw [line width=1.5pt] (0.25,0.0)--(0.25,0.25);
\draw [line width=1.5pt] (0.25,0.75)--(0.50,1.0);
\draw [line width=1.5pt] (0.50,0.0)--(0.75,0.25);

\end{tikzpicture}
\\
\begin{tikzpicture}[x=2.0cm,y=2.0cm]
\draw[dash pattern=on 2pt off 2pt] (0,0) rectangle (1,1);
\draw [fill=black,line width=1.5pt] (0.75,0.25) circle (3pt);
\draw [fill=black,line width=1.5pt] (0.5,0.25) circle (3pt);
\draw [fill=black,line width=1.5pt] (0.5,0.75) circle (3pt);
\draw [fill=black,line width=1.5pt] (0.75,0.75) circle (3pt);
\draw [line width=1.5pt] (0.75,0.25)--(0.5,0.75);
\draw [line width=1.5pt] (0.75,0.25)--(0.625,0.0);
\draw [line width=1.5pt] (0.625,1.0)--(0.5,0.75);
\draw [line width=1.5pt] (0.5,0.75)--(0.75,0.75);
\draw [line width=1.5pt] (0.5,0.75)--(0.0,0.75);
\draw [line width=1.5pt] (1.0,0.75)--(0.75,0.75);
\draw [line width=1.5pt] (0.5,0.25)--(0.75,0.25);
\draw [line width=1.5pt] (0.5,0.25)--(0.0,0.0);
\draw [line width=1.5pt] (0.0,1.0)--(0.0,1.00);
\draw [line width=1.5pt] (1.0,1.00)--(0.5,0.75);

\end{tikzpicture}
&
\begin{tikzpicture}[x=2.0cm,y=2.0cm]
\draw[dash pattern=on 2pt off 2pt] (0,0) rectangle (1,1);
\draw [fill=black,line width=1.5pt] (0.25,0.25) circle (3pt);
\draw [fill=black,line width=1.5pt] (0.75,0.25) circle (3pt);
\draw [fill=black,line width=1.5pt] (0.25,0.75) circle (3pt);
\draw [fill=black,line width=1.5pt] (0.75,0.75) circle (3pt);
\draw [line width=1.5pt] (0.25,0.25)--(0.25,0.75);
\draw [line width=1.5pt] (0.25,0.75)--(0.75,0.75);
\draw [line width=1.5pt] (0.75,0.25)--(0.75,0.75);
\draw [line width=1.5pt] (0.75,0.75)--(1.0,0.50);
\draw [line width=1.5pt] (0.0,0.50)--(0.25,0.25);
\draw [line width=1.5pt] (0.25,0.75)--(0.25,1.0);
\draw [line width=1.5pt] (0.25,0.0)--(0.25,0.25);
\draw [line width=1.5pt] (0.75,0.75)--(0.75,1.0);
\draw [line width=1.5pt] (0.75,0.0)--(0.75,0.25);

\end{tikzpicture}
&
\begin{tikzpicture}[x=2.0cm,y=2.0cm]
\draw[dash pattern=on 2pt off 2pt] (0,0) rectangle (1,1);
\draw [fill=black,line width=1.5pt] (0.25,0.25) circle (3pt);
\draw [fill=black,line width=1.5pt] (0.75,0.25) circle (3pt);
\draw [fill=black,line width=1.5pt] (0.25,0.75) circle (3pt);
\draw [fill=black,line width=1.5pt] (0.75,0.75) circle (3pt);
\draw [line width=1.5pt] (0.25,0.25)--(0.75,0.25);
\draw [line width=1.5pt] (0.25,0.25)--(0.25,0.75);
\draw [line width=1.5pt] (0.25,0.25)--(0.75,0.75);
\draw [line width=1.5pt] (0.25,0.25)--(0.0,0.25);
\draw [line width=1.5pt] (1.0,0.25)--(0.75,0.25);
\draw [line width=1.5pt] (0.25,0.25)--(0.00,0.0);
\draw [line width=1.5pt] (0.00,1.0)--(0.0,1.00);
\draw [line width=1.5pt] (1.0,1.00)--(0.75,0.75);
\draw [line width=1.5pt] (0.25,0.25)--(0.25,0.0);
\draw [line width=1.5pt] (0.25,1.0)--(0.25,0.75);

\end{tikzpicture}
&
\begin{tikzpicture}[x=2.0cm,y=2.0cm]
\draw[dash pattern=on 2pt off 2pt] (0,0) rectangle (1,1);
\draw [fill=black,line width=1.5pt] (0.25,0.25) circle (3pt);
\draw [fill=black,line width=1.5pt] (0.75,0.25) circle (3pt);
\draw [fill=black,line width=1.5pt] (0.25,0.75) circle (3pt);
\draw [fill=black,line width=1.5pt] (0.75,0.75) circle (3pt);
\draw [line width=1.5pt] (0.25,0.25)--(0.25,0.75);
\draw [line width=1.5pt] (0.25,0.75)--(0.75,0.75);
\draw [line width=1.5pt] (0.75,0.25)--(0.75,0.75);
\draw [line width=1.5pt] (0.75,0.75)--(1.0,0.75);
\draw [line width=1.5pt] (0.0,0.75)--(0.25,0.75);
\draw [line width=1.5pt] (0.25,0.75)--(0.25,1.0);
\draw [line width=1.5pt] (0.25,0.0)--(0.25,0.25);
\draw [line width=1.5pt] (0.75,0.75)--(0.75,1.0);
\draw [line width=1.5pt] (0.75,0.0)--(0.75,0.25);

\end{tikzpicture}
&
\end{tabular}

\begin{caption}{Irreducible torus graphs with four vertices}
    \label{fig_torus4}
\end{caption}
\end{figure}

\section{Application: contacts of circular arcs}
\label{sec_contacts}

In this section we describe an application to 
the study of contact graphs.
The foundational 
result in this area is the well known Koebe-Andreev-Thurston 
Circle Packing Theorem 
(\cite{Koebe}). More recently, contact graphs for 
many different classes of geometric objects have been studied,
with various restrictions placed on the allowed contacts.
See for example \cites{MR3677547,MR3775800,MR2304990}

We consider 
contact graphs arising from certain families of curves in surfaces of 
constant curvature. 
We begin by giving a model for a general class of contact problems 
and then specialise to a case of particular interest.

Let \( \alpha:[0,1] \rightarrow \Sigma \) be a curve. We say that 
\( \alpha \) is non selfoverlapping if it is injective on 
the open interval \( (0,1) \).
Now suppose that \( \alpha,\beta :[0,1]\rightarrow \Sigma \) are distinct 
curves in \( \Sigma \). We say that \( \alpha \) and \( \beta \)
are non overlapping if \( \alpha( (0,1) ) \cap \beta( (0,1)) = \emptyset \).
Let \( \mathcal C \) be a collection of
curves in \( \Sigma \) having the following properties
\begin{itemize}
    \item Every \( \alpha \in \mathcal C \) is non selfoverlapping.
    \item For every distinct  \( \alpha,\beta \in \mathcal C \), 
        \( \alpha \) and \( \beta \) are non overlapping.
\end{itemize}
We want to construct a combinatorial object that describes the
contact properties of such a collection. In order to do this we impose 
some further non degeneracy conditions on \( \mathcal C \)  as follows.

\begin{itemize}
    \item \( \alpha(0) \neq \alpha(1) \) for every \( \alpha\in \mathcal C \)
    \item For every distinct  \( \alpha,\beta \in \mathcal C \), 
        \( \{ \alpha(0),\alpha(1)\}\cap \{\beta(0),\beta(1)\} \) is empty.
\end{itemize}
In other words, we allow the end of 
one curve to touch another curve (or to touch itself), but
the point that it touches cannot be an endpoint of that curve.
We say that \( \mathcal C \) is a non degenerate collection of 
non overlapping 
curves. Note that if a collection fails the non degeneracy conditions, 
it can typically be made degenerate by an arbitrarily small perturbation. 
A contact of \( \mathcal C \) 
is a quadruple \( (\alpha,\beta,x,y) \) where \( \alpha,
\beta \in \mathcal C\), \( x \in \{ 0,1\} \) and \( \alpha(x) = \beta(y) 
\). 

Now we can define a graph
\( \Gamma_\mathcal C \) as follows. The vertex set is \( \mathcal C \)
and the edge set is \( \mathcal T \), 
the set of contacts of \( \mathcal C \). We define 
the incidence functions $s,t: \mathcal T \rightarrow \mathcal C$
by \( s(\alpha,\beta,x,y) = \alpha\) and
\( t(\alpha,\beta,x,y) = \beta \). The quadruple $(\mathcal C, \mathcal T, s,t)$
is a directed multigraph and we let $\Gamma_\mathcal C$ be the graph 
obtained by forgetting the edge orientations. 
We can construct an embedding \( |\Gamma_\mathcal C| \rightarrow 
\Sigma\) as follows. For \( \beta \in \mathcal C \), suppose that 
\( t^{-1}(\beta) = \{ (\alpha_1,\beta,x_1,y_1),\cdots,(\alpha_k,\beta,
x_k,y_k)\}\). Let \( J_\beta \) 
be a nonempty  
closed subinterval of \( [0,1] \) with the following properties.
\begin{enumerate}
    \item \( \{y_1,\cdots,y_k\} \subset J_\alpha \).
    \item \( 0 \in J_\beta \) if and only there is 
        no contact \( (\beta,\gamma,0,y) \) in 
        \( \mathcal T \).
    \item \( 1 \in J_\beta \) if and only there is 
        no contact \( (\beta,\gamma,1,y) \) in 
        \( \mathcal T \).
\end{enumerate}
In other words \( J_\beta \) is a subinterval that covers
all the `points of contact' in \( \beta \) together with 
any endpoints of \( \beta \) that do not touch a curve.
Now we observe that \( \beta:J_\beta \rightarrow \Sigma\) 
is a homeomorphism onto its 
image. So it follows from the Jordan-Schoenflies Theorem that 
\( \Sigma/\beta(J_\beta)  \) is homeomorphic to \( \Sigma \). 
Furthermore, since \( \beta(J_\beta) \cap \delta(J_\delta) = \emptyset\)
for \( \beta\neq \delta \) 
it follows that \( \Sigma \) is homeomorphic to \( \Sigma/\sim \)
where \( \sim \) is the equivalence relation that collapses each 
\( \beta(J_\beta)  \) to a point, for all \( \beta \in \mathcal C \). 
Using this homeomorphism we construct an embedding by mapping each vertex
of \( \Gamma_\mathcal C \)
(i.e. element of \( \mathcal C \)) to the corresponding point of 
\( \Sigma/\sim \). Since an edge of \( \Gamma_\mathcal C \)
is a contact \( (\alpha,\beta,x,y) \), we can construct the
corresponding edge embedding by using the restriction of 
\( \alpha \) to the component of \( [0,1]-J_\alpha \) that 
contains \( x \): see Figure \ref{fig:contactgraph}.
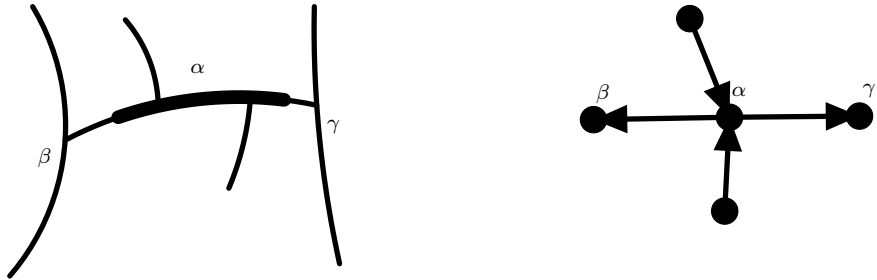
\begin{figure}
    \centering
    \begin{tikzpicture}[line cap=round,line join=round,>=triangle 45,x=0.8cm,y=0.8cm]
\draw [shift={(-14.313376241265173,3.633232806178742)},line width=2pt]  plot[domain=-0.709822785806681:0.540585023109418,variable=\t]({1*3.8410852119027363*cos(\t r)+0*3.8410852119027363*sin(\t r)},{0*3.8410852119027363*cos(\t r)+1*3.8410852119027363*sin(\t r)});
\draw [shift={(-7.588330241187384,-2.097551020408175)},line width=2pt]  plot[domain=1.3615724696804914:2.0557714410315295,variable=\t]({1*6.202819535954131*cos(\t r)+0*6.202819535954131*sin(\t r)},{0*6.202819535954131*cos(\t r)+1*6.202819535954131*sin(\t r)});
\draw [shift={(11.833431221020108,5.232766615146833)},line width=2pt]  plot[domain=3.1208382228641334:3.357982403706137,variable=\t]({1*18.177345993620637*cos(\t r)+0*18.177345993620637*sin(\t r)},{0*18.177345993620637*cos(\t r)+1*18.177345993620637*sin(\t r)});
\draw [shift={(-12.175668722024378,4.4413095947882155)},line width=2pt]  plot[domain=5.886187227121941:6.212343211515917,variable=\t]({1*4.78805570961944*cos(\t r)+0*4.78805570961944*sin(\t r)},{0*4.78805570961944*cos(\t r)+1*4.78805570961944*sin(\t r)});
\draw [shift={(-11.187308667220984,3.9171333975097165)},line width=2pt]  plot[domain=0.01803494063307368:0.7118114194542798,variable=\t]({1*2.2548256947930945*cos(\t r)+0*2.2548256947930945*sin(\t r)},{0*2.2548256947930945*cos(\t r)+1*2.2548256947930945*sin(\t r)});
\draw [shift={(-7.588330241187388,-2.09755102040814)},line width=5.2pt]  plot[domain=1.4500530883041112:1.9010842234735172,variable=\t]({1*6.202819535954096*cos(\t r)+0*6.202819535954096*sin(\t r)},{0*6.202819535954096*cos(\t r)+1*6.202819535954096*sin(\t r)});
\draw [line width=2pt,->] (0.56,3.77)-- (-1.7,3.73);
\draw [line width=2pt,->] (0.56,3.77)-- (2.72,3.79);
\draw [line width=2pt,<-] (0.56,3.77)-- (-0.1,5.41);
\draw [line width=2pt,<-] (0.56,3.77)-- (0.48,2.21);
\begin{scriptsize}
\draw[color=black] (-10.82,3.1) node {$\beta$};
\draw[color=black] (-8.28,4.58) node {$\alpha$};
\draw[color=black] (-6.02,3.62) node {$\gamma$};
\draw [fill=black] (0.56,3.77) circle (5pt);
\draw[color=black] (0.72,4.2) node {$\alpha$};
\draw [fill=black] (-1.7,3.73) circle (5pt);
\draw[color=black] (-1.54,4.16) node {$\beta$};
\draw [fill=black] (2.72,3.79) circle (5pt);
\draw[color=black] (2.88,4.22) node {$\gamma$};
\draw [fill=black] (-0.1,5.41) circle (5pt);
\draw [fill=black] (0.48,2.21) circle (5pt);
\end{scriptsize}
\end{tikzpicture}

    \caption{The construction of the contact graph associated to a 
    collection of curves. On the left we have a collection of curves. The
bold section of \( \alpha \) represents \( \alpha(J_\alpha) \). On the 
right is the corresponding graph with edge orientations as indicated.}
    \label{fig:contactgraph}
\end{figure}

We are interested in the recognition problem for contact graphs:
can we  find necessary and/or sufficient conditions for a 
surface graph to be the contact graph of a collection of curves?
Typically we are looking for conditions for which there are efficient 
algorithmic checks. As noted in the introduction, there are efficient
algorithms for checking whether or not a given graph is \( (2,l) \)-sparse.
See \cite{MR2392060} and \cite{MR1481894} for details. 

Hlin\v{e}n\'{y} (\cite{MR1644051}) has shown that a plane graph 
admits a representation by contacts of curves if and only if it is 
\( (2,0) \)-sparse. We note that he states the result 
only for simple planar graphs. It is easy to see however the the result applies to 
the multigraph model of contact graphs described above and to graphs 
embedded in arbitrary surfaces.
We include the statement to provide some context for our later result.

\begin{thm}
    \label{thm_curvesreplemma}
    Let \( G \) be a \( \Sigma \)-graph. Then \( G \cong G_\mathcal C \)
    for some \( \mathcal C \) as above if and only if \( G \) is 
    \( (2,0) \)-sparse. \qed
\end{thm}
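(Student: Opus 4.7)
The plan is to prove both implications. The forward direction is a direct counting argument exploiting the fact that each curve has only two endpoints, while the reverse direction combines the multigraph form of Hakimi's orientation theorem with a purely local topological replacement of vertices by arcs.

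For necessity, suppose $G \cong G_{\mathcal C}$ for some collection $\mathcal C$. Each contact $(\alpha,\beta,x,y)$ naturally orients the corresponding edge of $G$ from $\alpha$ (whose endpoint $\alpha(x)$ realises the contact) to $\beta$. Since each $\alpha\in\mathcal C$ has only two endpoints, this orientation has out-degree at most $2$ at every vertex. Restricting to any subgraph $H \subseteq G$, the sum of out-degrees in $H$ equals $|E(H)|$ and is bounded by $2|V(H)|$, which is exactly the $(2,0)$-sparsity condition.

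For sufficiency, suppose $G=(\Gamma,\varphi)$ is a $(2,0)$-sparse $\Sigma$-graph. First I would invoke Hakimi's orientation theorem in its multigraph version: $\Gamma$ is $(2,0)$-sparse if and only if it admits an orientation with out-degree at most $2$ at every vertex. Fix such an orientation. Next I would construct $\mathcal C$ by a local modification of $\varphi$ around each vertex. For each $v \in V$ pick a small embedded disc $D_v \subset \Sigma$ containing $\varphi(v)$, with the $D_v$ pairwise disjoint and each edge of $G$ meeting $D_v$ in a single arc ending at $\varphi(v)$. The edges incident to $v$ cross $\partial D_v$ at points arranged in a cyclic order determined by the rotation system; let $p,q$ be the at most two entry points corresponding to out-edges at $v$. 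Inside $D_v$ replace the incident star of edge-arcs by a simple arc $\sigma_v$ from $p$ to $q$ which separates $D_v$ into two half-discs, with the in-edge entry points distributed on the appropriate sides. Then shorten each in-edge so that it terminates transversely on the interior of $\sigma_v$, and retain each out-edge so that its far end lies on the interior of $\sigma_u$ at its target vertex $u$. Define the curve $\alpha_v$ to be the concatenation of $\sigma_v$ with the (zero, one, or two) out-edges at $v$; endpoints not associated with an out-edge are left free inside $D_v$.

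The collection $\mathcal C = \{\alpha_v : v \in V\}$ is non-overlapping and satisfies the non-degeneracy conditions by construction, and applying the collapsing recipe from the statement (with $J_{\alpha_v} = \sigma_v$) recovers $(\Gamma,\varphi)$ up to homeomorphism of $\Sigma$, so $G \cong G_{\mathcal C}$. The step that requires the most care is the verification that $\sigma_v$ can always be routed inside $D_v$ so as to respect the prescribed cyclic order of the edge attachments and, simultaneously, to produce the correct contacts at both endpoints; this is straightforward because $D_v$ is a topological disc with two marked boundary points and any simple arc joining them separates $D_v$ into two half-discs, so no genuine obstruction arises. The argument is entirely local at each vertex, hence works uniformly over all surfaces $\Sigma$ and for arbitrary multigraphs, extending Hlin\v{e}n\'{y}'s original planar simple-graph statement.
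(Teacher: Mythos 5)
Your proposal is correct, but it cannot be compared line-by-line with an argument in the paper because the paper does not prove this theorem at all: it is stated with a \( \qed \) and attributed to Hlin\v{e}n\'{y} \cite{MR1644051}, with only the remark that the planar simple-graph statement ``is easy to see'' to extend to the multigraph contact model and to arbitrary surfaces. What you have written is a self-contained justification of exactly that remark, and it follows the standard route: for necessity, orient each edge from the curve whose endpoint realises the contact, note that each endpoint yields at most one contact (this uses non-selfoverlapping, pairwise non-overlapping and the non-degeneracy conditions --- worth one explicit sentence, since ``each curve has only two endpoints'' alone does not bound the number of contacts per endpoint), and sum out-degrees over any subgraph; for sufficiency, use the multigraph form of Hakimi's orientation theorem (out-degree at most \(2\) if and only if \( |E(H)|\leq 2|V(H)| \) for all subgraphs \(H\)) and then perform the local surgery replacing each vertex star inside a small disc \(D_v\) by an arc \( \sigma_v \) whose two ends continue along the out-edges and whose interior receives the in-edges. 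Your observation that the two out-edge attachment points split the rotation at \(v\) into two linear orders, one for each side of \( \sigma_v \), is the right reason the rotation system (and hence the surface embedding, via the Heffter--Edmonds--Ringel principle the paper invokes) is preserved under the collapsing recipe with \( J_{\alpha_v}=\sigma_v \); loops and parallel edges are handled by the same mechanism since a loop consumes one endpoint of \( \alpha_v \) touching the interior of \( \alpha_v \) itself, which the paper's model explicitly permits. So your argument is a valid proof, more detailed than anything in the paper, and consistent with the source it cites; the only additions I would ask for are the one-line justification that each endpoint produces at most one contact and an explicit statement (or reference) of the multigraph orientation theorem you invoke.
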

It is worth noting here that graphs studied in \cite{MR1644051} and elsewhere 
are typically the intersection graphs of a collection of curves.
While this is usually appropriate in the plane, for non simply connected 
surfaces we propose that it is more natural to 
consider the contact graph as above. We observe that the intersection graph is 
obtained from the surface graph $G_\mathcal L$ by taking the underlying graph,
deleting all loops and replacing any 
sets of parallel edges by a single edge. 

Now we suppose that \( \Sigma \) is also equipped with a metric of constant 
curvature. In this context we can distinguish many interesting subclasses
of non selfoverlapping curves. For example, a circular arc is a 
curve of constant curvature and 
a line segment is a locally geodesic curve.
For collections of such curves the representability question 
can depend on the embedding of the graph and not just the graph itself
(in contrast to Theorem \ref{thm_curvesreplemma}).
For example, the graph \( C_2 \)  cannot 
be represented by a collection of line segments in the flat plane. However,
if \( C_2 \) is embedded as a non separating cycle in the  
torus, then it is easy to construct a representation of the 
resulting surface graph as a collection of line segments in the flat torus.

Given a \( \Sigma \)-graph \( G \) and 
a non degenerate non overlapping collection of circular arcs \( C \) 
such that \( G \cong G_\mathcal C \) we say that \( \mathcal C \)
is a CCA representation of \( \mathcal C \) (abbreviating Contacts of 
Circular Arcs).
See Figure \ref{fig:graphandCCArep} for an example in the torus.
Alam et al. (\cite{MR3677547}) have shown that any \( (2,2) \)-sparse
plane graph has a CCA representation in the flat plane.
We prove an analogous result for the flat torus.

\begin{figure}
    \centering
    \begin{tabular}[]{cc}

\begin{tikzpicture}[line width=1.5pt,vertex/.style = {inner sep=0mm,circle,draw,minimum size=7pt,fill=black}]
    \path (-0.2,-0.2) rectangle (4.2,4.2);
    \clip (0,0) rectangle (4,4);
    \draw [dash pattern = on 5pt off 5pt] (0,0) rectangle (4,4);
    \node (A) at (1,2) [vertex]{};
    \node (B) at (3,1.5) [vertex]{};
    \node (C) at (2,2.8) [vertex]{};
    \draw [->](A) -- (B);
    \draw [->]($(A)+(4,0)$) -- (B);
    \draw [->](C) -- (A);
    \draw ($(B)-(4,0)$)  -- (A);
    \draw [->]($(B)+(0,4)$)  -- (C);
    \draw (B) -- ($(C)-(0,4)$) ;

\end{tikzpicture}
&
\begin{tikzpicture}[line width=1.5pt]
    \path (-0.2,-0.2) rectangle (4.2,4.2);
    \clip (0,0) rectangle (4,4);
    \draw [dash pattern = on 5pt off 5pt] (0,0) rectangle (4,4);
    \draw
        (3,2) arc 
        (100:80:5) coordinate (A1) arc 
        (80:60.3:5) coordinate (B1);
    \draw
        (-1,2) arc 
        (100:80:5) coordinate (A2) arc 
        (80:60.3:5) coordinate (B2);
    \draw
        (A2) arc (-20:5:7); 
    \draw
        ($(A2)+(0,-4)$) arc (-20:3:7) coordinate (C) arc (3:5:7); 
    \draw
        (C) arc (-70:-35:4.58);
\end{tikzpicture}
\\
    \end{tabular}

    \caption{A torus graph and a corresponding CCA representation in the flat torus. The orientation of the graph edges is the orientation induced by the CCA representation.}
    \label{fig:graphandCCArep}
\end{figure}
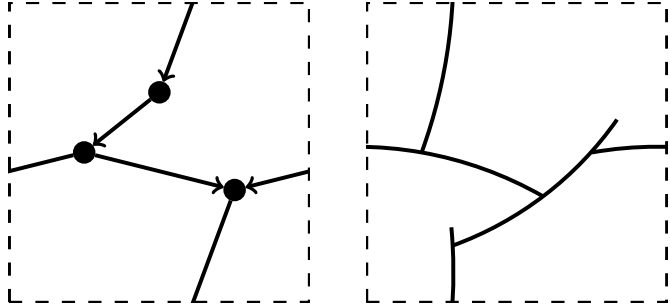

First we need a lemma to show that every sparse surface graph can be obtained by 
deleting only edges from a tight surface graph.

\begin{lem}
    \label{lem_sparse_to_tight}
    Suppose that \( \Sigma \) is a connected surface, \( l \leq 2 \) and
    \( G \) is a \( (2,l) \)-sparse \( \Sigma \)-graph. There is 
    some \( (2,2) \)-tight \( \Sigma \)-graph \( H \) such that \( V(H) = V(G) \) and 
    \( G \) is a subgraph of \( H \). 
\end{lem}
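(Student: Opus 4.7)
The plan is induction on $\gamma(G)-2\geq 0$. When $\gamma(G)=2$, $G$ itself is $(2,2)$-tight and we take $H=G$. For the inductive step, assume $\gamma(G)\geq 3$: it suffices to add a single edge to $G$, drawn in $\Sigma$ without crossings, so that the resulting $\Sigma$-graph is still $(2,2)$-sparse; iterating then produces $H$.

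The main structural tool is the partition of $V(G)$ into the vertex sets of the maximal $(2,2)$-tight subgraphs $M_1,\dots,M_k$ of $G$, which is well defined by Lemma \ref{lem_int_unions}. Each $M_i$ is automatically induced in $G$, since an extra edge of $G$ between vertices of $M_i$ would produce a subgraph of $\gamma$-value $1$, violating sparsity. Writing $M(v)$ for the class containing $v$, one checks that adding an edge $uv$ to $G$ preserves $(2,2)$-sparsity if and only if $M(u)\neq M(v)$: if $M(u)=M(v)$, then this class itself is a tight blocker on which $\gamma$ would drop to $1$; if $M(u)\neq M(v)$, then no tight subgraph of $G$ contains both endpoints (again by Lemma \ref{lem_int_unions}), so every subgraph containing $\{u,v\}$ has $\gamma\geq 3$.

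Since $\gamma(G)>2$ we have $k\geq 2$, and the task reduces to finding an \emph{embeddable} edge between distinct classes. In Case A some existing edge $e=uv\in E(G)$ already joins different classes, and we draw a parallel copy of $e$ close to $e$ in an adjacent face. In Case B every edge of $G$ lies inside some $M_i$, and hence $G$ is the disjoint union of the $M_i$ as a $\Sigma$-graph; the task becomes to locate a face $F$ of $G$ whose boundary visits two distinct classes and to draw a new edge through $F$ between such boundary vertices.

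The main obstacle is the topological step in Case B. Suppose for contradiction that every face of $G$ has its boundary contained in a single class, and choose a nontrivial bipartition $\{M_1,\dots,M_k\}=A\sqcup B$. Set $G_A=\bigcup_{i\in A} M_i$ and $G_B=\bigcup_{i\in B} M_i$, and let $U_A$ be the union of $G_A$ with the closures of all faces whose boundary lies in $G_A$; define $U_B$ analogously. Both $U_A$ and $U_B$ are closed and cover $\Sigma$, and they are disjoint because any point of $G_A$ lies only on boundaries of $A$-type faces, and symmetrically for $G_B$. This contradicts the connectedness of $\Sigma$, so a mixed face $F$ must exist; two boundary vertices of $F$ in distinct classes can then be joined by an arc drawn inside $F$, completing the inductive step. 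Some care is needed for degenerate boundary walks (e.g.\ at isolated vertices appearing as degenerate boundary walks of a face) and for non-cellular faces with multiple boundary components, but the closed-cover argument applies uniformly in all these cases.
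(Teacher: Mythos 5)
Your proposal is correct and follows essentially the same route as the paper's proof: the key mechanism is the maximal $(2,2)$-tight subgraph argument via Lemma \ref{lem_int_unions} combined with connectedness of $\Sigma$ to locate a face through which an admissible edge can be drawn, and your partition into maximal tight classes, the parallel-copy case, and the closed-cover argument are a more detailed working-out of steps the paper states tersely (in particular of ``Since $\Sigma$ is connected there is some face $F$ of $G$ whose boundary contains vertices $u\in L$ and $v\notin L$''). The only caveat is that you silently work with $l=2$ (your base case and the edge-addition criterion use $(2,2)$-sparsity of $G$), whereas the paper's induction preserves $(2,l)$-sparsity for any $l\le 2$ and in particular handles the situation where $G$ has no $(2,l)$-tight subgraph at all; since the stated conclusion is a $(2,2)$-tight graph and the lemma is only invoked with $l=2$, this restriction does not affect the substance.
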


\begin{proof}
    Clearly it suffices to show that if \( \gamma(G) \geq l+1 \) then we 
    can add an edge \( e \) within some face of \( G \) 
    so that \( G \cup \{e\} \) is \( (2,l) \)-sparse.

    Now if \( G \) has no tight subgraph then we can add any edge without violating
    the sparsity count. So we assume that \( G \) has some nonempty tight subgraph. 
    Let \( L \) be a maximal tight subgraph of \( G \). If \( L \) spans all vertices of
    \( G \) then \( L = G \) and \( G \) is already tight, so we assume that 
    \( L \) is not spanning in \( G \). Since \( \Sigma \) is connected there is some 
    face \( F \) of \( G \) whose boundary contains vertices \( u \in L \) and \( v \not\in
    L\). Let \( e \) be a new edge that joins \( u \) and \( v \) through a path in 
    \( F \). We claim that \( G \cup \{e\} \) is \( (2,l) \)-sparse. If not then 
    there must be some tight subgraph \( K \) of \( G \) such that \( u,v \in K \). 
    But \( K \cap L \) is nonempty, so by Lemma \ref{lem_int_unions}, \( K\cup L \)
    is \( (2,l) \)-tight. This contradicts the maximality of \( L \).
\end{proof}

\begin{thm}
    \label{thm:torusCCA}
    Every \( (2,2) \)-sparse torus graph admits a CCA representation 
    in the flat torus.
\end{thm}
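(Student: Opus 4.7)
The plan is to combine Lemma~\ref{lem_sparse_to_tight} with the inductive structure theorem for \( (2,2) \)-tight torus graphs (Theorem~\ref{thm_main_torus} and Corollary~\ref{cor_main_finite}) to reduce the statement to a finite collection of base cases together with a local realisation argument for each of the three splitting moves.

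First I would show that CCA-representability passes to spanning subgraphs. If \( H \) has a CCA representation \( \mathcal C \) and \( G \) is obtained from \( H \) by deleting some edges (keeping all vertices), then each deleted edge corresponds to a contact \( (\alpha,\beta,x,y) \); the endpoint \( \alpha(x) \) lies in the interior of \( \beta \) and can be shortened away from \( \beta \) by an arbitrarily small retraction along \( \alpha \). Doing this simultaneously, and using the non degeneracy of \( \mathcal C \) to choose the retractions small enough to avoid creating new contacts, produces a CCA representation of \( G \). By Lemma~\ref{lem_sparse_to_tight} this reduces the theorem to the \( (2,2) \)-tight case.

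Second, I would argue by induction on the number of vertices of a \( (2,2) \)-tight torus graph \( G \), using the inductive construction provided by Lemmas~\ref{lem_digon_contractible}, \ref{lem_tri_contractible} and the quadrilateral contraction analysis of Section~\ref{sec_inductive}: \( G \) is either one of the 116 irreducible \( (2,2) \)-tight torus graphs, or it arises from a smaller \( (2,2) \)-tight torus graph by a digon, triangle or quadrilateral split. For the inductive step, each splitting move must be realised as a local geometric operation on a CCA representation. A digon split takes an arc \( \beta \) (the image of the vertex \( v \) being split) and replaces it by two arcs \( \beta_1, \beta_2 \) meeting at two contact points, with the contacts previously incident to \( \beta \) distributed between \( \beta_1 \) and \( \beta_2 \) according to the split; a triangle split is similar, with one contact moved onto a neighbouring arc; a quadrilateral split involves cutting \( \beta \) into two arcs that respect the two diagonally opposite vertices \( u, v \in \partial Q \). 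Since each operation is supported in an arbitrarily small disc in the flat torus, where the geometry is indistinguishable from that of the Euclidean plane, the local constructions available in the planar setting (as in Alam et al.~\cite{MR3677547}) may be imported verbatim, with the only genuinely new content being the verification that the replacement curves can be chosen to be circular arcs rather than general smooth curves.

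Third, the base cases: each of the 116 irreducible \( (2,2) \)-tight torus graphs must be given an explicit CCA representation in the flat torus. For the two non cellular irreducibles of Figure~\ref{fig_annular_irreds} and the small cellular examples of Figures~\ref{fig_torusleq3} and~\ref{fig_torus4} the representations are easy to exhibit directly (for instance by using great circles on the flat torus viewed as \( \mathbb R^2/\mathbb Z^2 \)). For the larger irreducibles the existence of a representation can be established systematically, for example by exhibiting a CCA realisation of a containing \( (2,2) \)-tight supergraph (such as a quadrangulation obtained by appropriate edge additions) and then invoking the spanning subgraph argument from the first step; alternatively one can enumerate the cases with computer assistance along the lines of~\cite{irreducibles}.

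The main obstacle is the geometric realisation of the splitting moves by circular arcs, as opposed to general curves. In the planar CCA result of Alam et al.\ this is handled by a continuity / perturbation argument combined with the flexibility provided by the three-parameter family of circular arcs through two given points; adapting this to the flat torus requires being careful near the curves along which the torus is cut, but the operations in question are local and so the same strategy should succeed. Handling the 116 base cases uniformly is the main bookkeeping burden.
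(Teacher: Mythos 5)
Your proposal follows essentially the same route as the paper: reduce to the $(2,2)$-tight case via Lemma \ref{lem_sparse_to_tight} together with the observation that edge deletion is realised by shortening an arc, then induct through digon, triangle and quadrilateral splits (importing the planar constructions of Alam et al.\ for the first two and giving a local construction for the quadrilateral split), with the 116 irreducibles as explicitly represented base cases. The only notable difference is bookkeeping: the paper builds the irreducible representations from the 12 irreducibles with no degree-two vertices via CCA-representable topological Henneberg moves, whereas you suggest direct or supergraph-based constructions, but this does not change the substance of the argument.
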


\begin{proof}
    First observe that edge deletion is CCA representable: just 
    shorten one of the arcs slightly. So by Lemma \ref{lem_sparse_to_tight}
    it suffices to prove the theorem for \( (2,2) \)-tight torus graphs.
    To that end we must show that
    \begin{enumerate}[(a)]
        \item each irreducible \( (2,2) \)-tight torus graph
            has a CCA representation.
        \item if \( G \rightarrow G'\) is a digon, triangle or quadrilateral
            contraction move and \( G' \) has a CCA representation, then 
            \( G \) also has a CCA representation. In other words 
            the relevant vertex splitting moves are CCA representable.
    \end{enumerate}

    For (a) it is possible to give an explicit CCA representation for each 
    of the 116  irreducibles listed in Appendix A. 
    We will not describe those
    here but below we shall explain a simple method to make 
    these constructions easily. Full details are given in 
    \cite{QSthesis}.
    
    \begin{figure}
        \begin{center}

\begin{tikzpicture}[line width=1.0pt,rotate = -90]
    \draw (0,0) coordinate (A) arc (110:98:5) coordinate (B) 
        arc (98:83:5) coordinate (C) arc (83:70:5) coordinate (D) 
        arc (70:60:5) coordinate (E) node {};
    \draw (A) arc (0:20:3) coordinate (A1) arc (20:30:3);
    \draw (A) arc (0:-15:3) coordinate (A2) arc (-15:-20:3);
    \draw (E) arc (-190:-210:3);
    \draw (E) arc (-190:-170:3);
    \draw (D) arc (0:20:6);
    \draw (B) arc (180:195:6);
    \draw (C) arc (0:10:10);

    \node (z) at ($(A)!0.5!(E)+(-.2,0.2)$) {$z$};
    \node (v2) at (A) [above]{$v_2$};
    \node (v4) at ($(D)+(0.3,0.5)$){$v_4$};

    \tikzset{shift={(0,6)}};
    \draw (0,0) coordinate (A) arc (110:98:5) coordinate (B) arc (98:83:5) coordinate (C) arc (83:70:5) coordinate (D) arc (70:60:5) coordinate (E) node {};
    \draw (A) arc (0:5:3) coordinate (S) arc (5:20:3) coordinate (A1) arc (20:30:3);
    \draw (A) arc (0:-15:3) coordinate (A2) arc (-15:-20:3);
    \draw (E) arc (-190:-210:3);
    \draw (E) arc (-190:-170:3);
    \draw (D) arc (0:20:6);
    \draw (B) arc (180:195:6);
    \path [name path={CP}] (C) arc (0:10:10);
    \draw [name path = {Z}] (S) arc (110:71:5.1);
    \draw [name intersections={of=Z and CP,by={I}}] (I) arc (2:10:10);

    \node (v1) at ($(A)!0.5!(E)+(-.2,0.2)$) {$v_1$};
    \node (v3) at ($(A)!0.5!(C)+(0,0.4)$) [right]{$v_3$};
    \node (v2) at (A) [above]{$v_2$};
    \node (v4) at ($(D)+(0.3,0.5)$){$v_4$};
\end{tikzpicture}

        \end{center}
        \caption{A CCA representation of a quadrilateral splitting move. The contact
        graph of the configuration on the left is a quadrilateral contraction of the contact
    graph of the configuration on the right.}
        \label{fig:CCAinversequad}
    \end{figure}
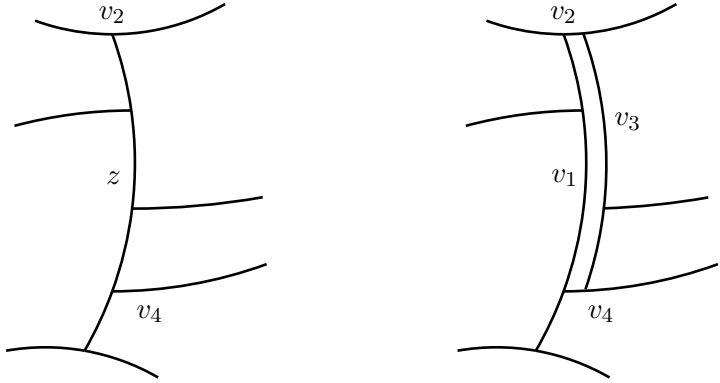

    For (b), see Figure \ref{fig:CCAinversequad} for an illustration of the CCA representation of the 
    quadrilateral split.  It is easily seen that any 
    quadrilateral split is similarly CCA representable. The 
    digon and triangle 
    vertex splits are also representable and indeed 
    have already been dealt with in the plane context in \cite{MR3677547}. 
    We observe that the constructions described there 
    work equally well for 
    torus graphs. 
\end{proof}

In order to construct the CCA representations of the irreducibles
mentioned in the proof we can make use of topological Henneberg moves.
We remind the reader that a Henneberg 
vertex addition move is the operation of adding a new vertex to 
a graph and two edges from that vertex to the existing graph. 
Note that we allow the two new edges to be parallel. Moreover 
in the context of surface graphs we insist that the new vertex is placed in
some face of the existing graph and the two edges are incident 
with vertices in the boundary of that face. We refer to such an operation 
as a topological Henneberg move. Clearly a Henneberg move
is the inverse operation to divalent vertex deletion. It is well known 
(and elementary) that divalent vertex deletions preserve \( (2,l) \)-sparseness
for all \( l \). On the other hand Henneberg moves preserve 
\( (2,l) \)-sparseness for \( l \leq 2 \), and for \( l = 3 \) if we also 
insist that the new edges are not parallel.

It turns out that there are just 12 irreducibles that have no vertices of
degree 2. In Figure \ref{fig_no_deg_two} we give diagrams of each of these torus 
graphs and in Figure \ref{fig_CCArepsnodegree2} we give  sample CCA representations 
of each of these in the flat torus.
We observe that each of the 116  irreducible graphs 
can be constructed
by a sequence topological Henneberg moves
from one of the torus graphs in Figure \ref{fig_no_deg_two}:
indeed one easily sees that at most five Henneberg moves are required.

It remains to show that the required topological Henneberg moves are 
CCA representable.
A CCA representation of a topological Henneberg  move is illustrated in 
Figure \ref{fig:CCAhenneberg}. 
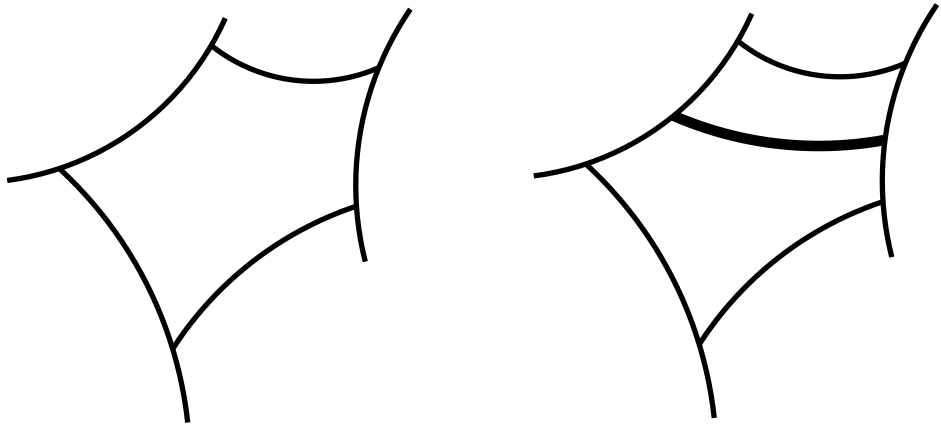
\begin{figure}
    \centering
\begin{tikzpicture}[line join=round,>=triangle 45,x=1cm,y=1cm]
\draw [shift={(-5.268479025529105,7.702508793584722)},line width=2pt]  plot[domain=4.037237719266882:5.1234322511114065,variable=\t]({1*2.1735563278407204*cos(\t r)+0*2.1735563278407204*sin(\t r)},{0*2.1735563278407204*cos(\t r)+1*2.1735563278407204*sin(\t r)});
\draw [shift={(-0.5609160305343568,4.1458778625954205)},line width=2pt]  plot[domain=2.540597002328743:3.38917059271077,variable=\t]({1*4.145484746724701*cos(\t r)+0*4.145484746724701*sin(\t r)},{0*4.145484746724701*cos(\t r)+1*4.145484746724701*sin(\t r)});
\draw [shift={(-3.2202102515328495,-0.5519987227543979)},line width=2pt]  plot[domain=1.89321994944731:2.5698700707776663,variable=\t]({1*4.661033064650258*cos(\t r)+0*4.661033064650258*sin(\t r)},{0*4.661033064650258*cos(\t r)+1*4.661033064650258*sin(\t r)});
\draw [shift={(-12.289632037947849,0.40864652286778974)},line width=2pt]  plot[domain=0.10824689414403717:0.8270296741331332,variable=\t]({1*5.381127651971449*cos(\t r)+0*5.381127651971449*sin(\t r)},{0*5.381127651971449*cos(\t r)+1*5.381127651971449*sin(\t r)});
\draw [shift={(-2.812827521206411,7.931573986804904)},line width=2pt]  plot[domain=4.840810624014497:5.864332410564741,variable=\t]({1*3.6919764253928484*cos(\t r)+0*3.6919764253928484*sin(\t r)},{0*3.6919764253928484*cos(\t r)+1*3.6919764253928484*sin(\t r)});
\draw [shift={(1.7315209744708944,7.762508793584735)},line width=2pt]  plot[domain=4.037237719266886:5.123432251111404,variable=\t]({1*2.1735563278407315*cos(\t r)+0*2.1735563278407315*sin(\t r)},{0*2.1735563278407315*cos(\t r)+1*2.1735563278407315*sin(\t r)});
\draw [shift={(6.4390839694656545,4.205877862595419)},line width=2pt]  plot[domain=2.5405970023287443:3.389170592710769,variable=\t]({1*4.14548474672471*cos(\t r)+0*4.14548474672471*sin(\t r)},{0*4.14548474672471*cos(\t r)+1*4.14548474672471*sin(\t r)});
\draw [shift={(3.779789748467141,-0.49199872275438783)},line width=2pt]  plot[domain=1.8932199494473083:2.5698700707776667,variable=\t]({1*4.661033064650245*cos(\t r)+0*4.661033064650245*sin(\t r)},{0*4.661033064650245*cos(\t r)+1*4.661033064650245*sin(\t r)});
\draw [shift={(-5.289632037947856,0.4686465228677842)},line width=2pt]  plot[domain=0.10824689414403807:0.8270296741331332,variable=\t]({1*5.381127651971457*cos(\t r)+0*5.381127651971457*sin(\t r)},{0*5.381127651971457*cos(\t r)+1*5.381127651971457*sin(\t r)});
\draw [shift={(1.4503866834932972,9.50720162603596)},line width=4pt]  plot[domain=4.30204017008921:4.895037345012064,variable=\t]({1*4.838920439303007*cos(\t r)+0*4.838920439303007*sin(\t r)},{0*4.838920439303007*cos(\t r)+1*4.838920439303007*sin(\t r)});
\draw [shift={(-9.812827521206408,7.8715739868049)},line width=2pt]  plot[domain=4.840810624014497:5.864332410564742,variable=\t]({1*3.691976425392844*cos(\t r)+0*3.691976425392844*sin(\t r)},{0*3.691976425392844*cos(\t r)+1*3.691976425392844*sin(\t r)});
\end{tikzpicture}

\caption{A CCA representation of a topological Henneberg move. The bold arc on the right represents the new vertex, that touches two of the initial arcs.}
    \label{fig:CCAhenneberg}
\end{figure}
In general of course, topological Henneberg moves can fail to 
be CCA representable given a fixed representation of the initial graph. 
It is easy to construct an CCA representation of a graph that has a highly 
nonconvex region and so may not admit the required circular arc to realise a 
topological Henneberg move.
However, it is readily verified that given the CCA representations in Figure
\ref{fig_CCArepsnodegree2}, it is possible to represent all the necessary 
Henneberg moves that are required to construct CCA representations of 
the full set of 116  irreducible graphs. See \cite{QSthesis} for complete 
details of this. 

\begin{figure}
    \centering

\begin{tabular}{ccccc}
\begin{tikzpicture}[x=2.0cm,y=2.0cm]
\draw[dash pattern=on 2pt off 2pt] (0,0) rectangle (1,1);
\draw [fill=black,line width=1.5pt] (0.5,0.5) circle (3pt);

\draw (0.5,-0.15) node {$G^1_1$};

\end{tikzpicture}
&
\begin{tikzpicture}[x=2.0cm,y=2.0cm]
\draw[dash pattern=on 2pt off 2pt] (0,0) rectangle (1,1);
\draw [fill=black,line width=1.5pt] (0.25,0.5) circle (3pt);
\draw [fill=black,line width=1.5pt] (0.5,0.75) circle (3pt);
\draw [fill=black,line width=1.5pt] (0.75,0.5) circle (3pt);
\draw [fill=black,line width=1.5pt] (0.5,0.25) circle (3pt);
\draw [line width=1.5pt] (0.25,0.5)--(0.5,0.75);
\draw [line width=1.5pt] (0.5,0.75)--(0.75,0.5);
\draw [line width=1.5pt] (0.75,0.5)--(0.5,0.25);
\draw [line width=1.5pt] (0.5,0.25)--(0.25,0.5);
\draw [line width=1.5pt] (0.25,0.5)--(0.0,0.5);
\draw [line width=1.5pt] (1.0,0.5)--(0.75,0.5);
\draw [line width=1.5pt] (0.5,0.75)--(0.5,1.0);
\draw [line width=1.5pt] (0.5,0.0)--(0.5,0.25);

\draw (0.5,-0.15) node {$G^4_1$};

\end{tikzpicture}
&
\begin{tikzpicture}[x=2.0cm,y=2.0cm]
\draw[dash pattern=on 2pt off 2pt] (0,0) rectangle (1,1);
\draw [fill=black,line width=1.5pt] (0.25,0.25) circle (3pt);
\draw [fill=black,line width=1.5pt] (0.75,0.25) circle (3pt);
\draw [fill=black,line width=1.5pt] (0.25,0.75) circle (3pt);
\draw [fill=black,line width=1.5pt] (0.75,0.75) circle (3pt);
\draw [line width=1.5pt] (0.75,0.75)--(0.75,0.25);
\draw [line width=1.5pt] (0.25,0.25)--(0.25,0.75);
\draw [line width=1.5pt] (0.25,0.75)--(0.75,0.75);
\draw [line width=1.5pt] (0.25,0.25)--(0.0,0.25);
\draw [line width=1.5pt] (1.0,0.25)--(0.75,0.25);
\draw [line width=1.5pt] (0.75,0.25)--(0.75,0.0);
\draw [line width=1.5pt] (0.75,1.0)--(0.75,0.75);
\draw [line width=1.5pt] (0.25,0.25)--(0.25,0.0);
\draw [line width=1.5pt] (0.25,1.0)--(0.25,0.75);

\draw (0.5,-0.15) node {$G^4_2$};

\end{tikzpicture}
&
\begin{tikzpicture}[x=2.0cm,y=2.0cm]
\draw[dash pattern=on 2pt off 2pt] (0,0) rectangle (1,1);
\draw [fill=black,line width=1.5pt] (0.25,0.25) circle (3pt);
\draw [fill=black,line width=1.5pt] (0.25,0.75) circle (3pt);
\draw [fill=black,line width=1.5pt] (0.5,0.75) circle (3pt);
\draw [fill=black,line width=1.5pt] (0.75,0.75) circle (3pt);
\draw [fill=black,line width=1.5pt] (0.75,0.25) circle (3pt);
\draw [line width=1.5pt] (0.25,0.25)--(0.25,0.75);
\draw [line width=1.5pt] (0.25,0.75)--(0.5,0.75);
\draw [line width=1.5pt] (0.5,0.75)--(0.75,0.75);
\draw [line width=1.5pt] (0.75,0.75)--(0.50,1.0);
\draw [line width=1.5pt] (0.50,0.0)--(0.25,0.25);
\draw [line width=1.5pt] (0.25,0.75)--(1.0,0.75);
\draw [line width=1.5pt] (0.0,0.75)--(0.75,0.75);
\draw [line width=1.5pt] (0.25,0.75)--(0.00,1.0);
\draw [line width=1.5pt] (0.00,0.0)--(0.0,0.00);
\draw [line width=1.5pt] (1.0,0.00)--(0.75,0.25);
\draw [line width=1.5pt] (0.5,0.75)--(0.75,0.25);
\draw [line width=1.5pt] (0.25,0.25)--(0.0,0.25);
\draw [line width=1.5pt] (1.0,0.25)--(0.75,0.25);

\draw (0.5,-0.15) node {$G^5_1$};

\end{tikzpicture}
&
\begin{tikzpicture}[x=2.0cm,y=2.0cm]
\draw[dash pattern=on 2pt off 2pt] (0,0) rectangle (1,1);
\draw [fill=black,line width=1.5pt] (0.25,0.5) circle (3pt);
\draw [fill=black,line width=1.5pt] (0.5,0.75) circle (3pt);
\draw [fill=black,line width=1.5pt] (0.75,0.5) circle (3pt);
\draw [fill=black,line width=1.5pt] (0.5,0.25) circle (3pt);
\draw [fill=black,line width=1.5pt] (0.75,0.25) circle (3pt);
\draw [line width=1.5pt] (0.25,0.5)--(0.5,0.75);
\draw [line width=1.5pt] (0.5,0.75)--(0.75,0.5);
\draw [line width=1.5pt] (0.75,0.5)--(0.5,0.25);
\draw [line width=1.5pt] (0.5,0.25)--(0.25,0.5);
\draw [line width=1.5pt] (0.5,0.75)--(0.5,1.0);
\draw [line width=1.5pt] (0.5,0.0)--(0.5,0.25);
\draw [line width=1.5pt] (0.75,0.5)--(0.75,0.25);
\draw [line width=1.5pt] (0.75,0.5)--(0.75,1.0);
\draw [line width=1.5pt] (0.75,0.0)--(0.75,0.25);
\draw [line width=1.5pt] (0.25,0.5)--(0.0,0.375);
\draw [line width=1.5pt] (1.0,0.375)--(0.75,0.25);

\draw (0.5,-0.15) node {$G^5_2$};

\end{tikzpicture}
\\
\begin{tikzpicture}[x=2.0cm,y=2.0cm]
\draw[dash pattern=on 2pt off 2pt] (0,0) rectangle (1,1);
\draw [fill=black,line width=1.5pt] (0,0.5) circle (3pt);
\draw [fill=black,line width=1.5pt] (1,0.5) circle (3pt);
\draw [fill=black,line width=1.5pt] (0.5,0.5) circle (3pt);
\draw [fill=black,line width=1.5pt] (0.25,0.25) circle (3pt);
\draw [fill=black,line width=1.5pt] (0.25,0.75) circle (3pt);
\draw [fill=black,line width=1.5pt] (0.75,0.25) circle (3pt);
\draw [fill=black,line width=1.5pt] (0.75,0.75) circle (3pt);
\draw [line width=1.5pt] (0,0.5)--(0.25,0.25);
\draw [line width=1.5pt] (0,0.5)--(0.25,0.75);
\draw [line width=1.5pt] (0.5,0.5)--(0.25,0.25);
\draw [line width=1.5pt] (0.25,0.25)--(0.25,0.0);
\draw [line width=1.5pt] (0.25,1.0)--(0.25,0.75);
\draw [line width=1.5pt] (0.5,0.5)--(0.25,0.75);
\draw [line width=1.5pt] (0.5,0.5)--(0.75,0.25);
\draw [line width=1.5pt] (0.5,0.5)--(0.75,0.75);
\draw [line width=1.5pt] (0,0.5)--(0.0,0.5);
\draw [line width=1.5pt] (1.0,0.5)--(0.75,0.75);
\draw [line width=1.5pt] (0,0.5)--(0.0,0.5);
\draw [line width=1.5pt] (1.0,0.5)--(0.75,0.25);
\draw [line width=1.5pt] (0.75,0.25)--(0.75,0.0);
\draw [line width=1.5pt] (0.75,1.0)--(0.75,0.75);

\draw (0.5,-0.15) node {$G^6_1$};

\end{tikzpicture}
&
\begin{tikzpicture}[x=2.0cm,y=2.0cm]
\draw[dash pattern=on 2pt off 2pt] (0,0) rectangle (1,1);
\draw [fill=black,line width=1.5pt] (0.2,0.2) circle (3pt);
\draw [fill=black,line width=1.5pt] (0.2,0.4) circle (3pt);
\draw [fill=black,line width=1.5pt] (0.2,0.6) circle (3pt);
\draw [fill=black,line width=1.5pt] (0.2,0.8) circle (3pt);
\draw [fill=black,line width=1.5pt] (0.8,0.8) circle (3pt);
\draw [fill=black,line width=1.5pt] (0.8,0.2) circle (3pt);
\draw [line width=1.5pt] (0.2,0.2)--(0.2,0.4);
\draw [line width=1.5pt] (0.2,0.6)--(0.2,0.4);
\draw [line width=1.5pt] (0.2,0.6)--(0.2,0.8);
\draw [line width=1.5pt] (0.2,0.8)--(0.2,1.0);
\draw [line width=1.5pt] (0.2,0.0)--(0.2,0.2);
\draw [line width=1.5pt] (0.8,0.2)--(1.0,2.00000000000E-17);
\draw [line width=1.5pt] (0.0,2.00000000000E-17)--(2.0000000000E-17,0.0);
\draw [line width=1.5pt] (2.0000000000E-17,1.0)--(0.2,0.80000000000000004);
\draw [line width=1.5pt] (0.8,0.8)--(0.2,0.8);
\draw [line width=1.5pt] (0.8,0.8)--(0.8,1.0);
\draw [line width=1.5pt] (0.8,0.0)--(0.8,0.2);
\draw [line width=1.5pt] (0.8,0.8)--(1.0,0.6);
\draw [line width=1.5pt] (0.0,0.6)--(0.2,0.4);
\draw [line width=1.5pt] (0.8,0.2)--(0.2,0.4);
\draw [line width=1.5pt] (0.2,0.6)--(1.0,0.28);
\draw [line width=1.5pt] (0.0,0.28)--(0.2,0.2);

\draw (0.5,-0.15) node {$G_{2}^6$};

\end{tikzpicture}
&
\begin{tikzpicture}[x=2.0cm,y=2.0cm]
\draw[dash pattern=on 2pt off 2pt] (0,0) rectangle (1,1);
\draw [fill=black,line width=1.5pt] (0.2,0.2) circle (3pt);
\draw [fill=black,line width=1.5pt] (0.2,0.4) circle (3pt);
\draw [fill=black,line width=1.5pt] (0.2,0.6) circle (3pt);
\draw [fill=black,line width=1.5pt] (0.2,0.8) circle (3pt);
\draw [fill=black,line width=1.5pt] (0.8,0.8) circle (3pt);
\draw [fill=black,line width=1.5pt] (0.8,0.4) circle (3pt);
\draw [line width=1.5pt] (0.2,0.2)--(0.2,0.4);
\draw [line width=1.5pt] (0.2,0.6)--(0.2,0.4);
\draw [line width=1.5pt] (0.2,0.6)--(0.2,0.8);
\draw [line width=1.5pt] (0.2,0.8)--(0.2,1.0);
\draw [line width=1.5pt] (0.2,0.0)--(0.2,0.2);
\draw [line width=1.5pt] (0.8,0.8)--(0.2,0.8);
\draw [line width=1.5pt] (0.8,0.8)--(0.8,0.4);
\draw [line width=1.5pt] (0.2,0.4)--(0.8,0.4);
\draw [line width=1.5pt] (0.2,0.2)--(0.0,0.24);
\draw [line width=1.5pt] (1.0,0.24)--(0.2,0.4);
\draw [line width=1.5pt] (0.8,0.4)--(1.0,0.5);
\draw [line width=1.5pt] (0.0,0.5)--(0.2,0.6);
\draw [line width=1.5pt] (0.8,0.8)--(1.0,1.0);
\draw [line width=1.5pt] (1.0,0.0)--(1.0,0.0);
\draw [line width=1.5pt] (0.0,0.0)--(0.2,0.2);

\draw (0.5,-0.15) node {$G_{3}^6$};

\end{tikzpicture}
&
\begin{tikzpicture}[x=2.0cm,y=2.0cm]
\draw[dash pattern=on 2pt off 2pt] (0,0) rectangle (1,1);
\draw [fill=black,line width=1.5pt] (0.2,0.2) circle (3pt);
\draw [fill=black,line width=1.5pt] (0.2,0.4) circle (3pt);
\draw [fill=black,line width=1.5pt] (0.2,0.6) circle (3pt);
\draw [fill=black,line width=1.5pt] (0.2,0.8) circle (3pt);
\draw [fill=black,line width=1.5pt] (0.8,0.8) circle (3pt);
\draw [fill=black,line width=1.5pt] (0.8,0.2) circle (3pt);
\draw [line width=1.5pt] (0.2,0.2)--(0.2,0.4);
\draw [line width=1.5pt] (0.2,0.6)--(0.2,0.4);
\draw [line width=1.5pt] (0.2,0.6)--(0.2,0.8);
\draw [line width=1.5pt] (0.2,0.8)--(0.2,1.0);
\draw [line width=1.5pt] (0.2,0.0)--(0.2,0.2);
\draw [line width=1.5pt] (0.8,0.8)--(0.2,0.6);
\draw [line width=1.5pt] (0.8,0.8)--(0.8,0.2);
\draw [line width=1.5pt] (0.2,0.2)--(0.8,0.2);
\draw [line width=1.5pt] (0.8,0.8)--(1.0,0.6);
\draw [line width=1.5pt] (0.0,0.6)--(0.2,0.4);
\draw [line width=1.5pt] (0.8,0.2)--(1.0,2.00000000000E-17);
\draw [line width=1.5pt] (0.0,2.00000000000E-17)--(2.0000000000E-17,0.0);
\draw [line width=1.5pt] (2.0000000000E-17,1.0)--(0.2,0.80000000000000004);
\draw [line width=1.5pt] (0.8,0.8)--(0.8,1.0);
\draw [line width=1.5pt] (0.8,0.0)--(0.8,0.2);

\draw (0.5,-0.15) node {$G_{4}^6$};

\end{tikzpicture}
&
\begin{tikzpicture}[x=2.0cm,y=2.0cm]
\draw[dash pattern=on 2pt off 2pt] (0,0) rectangle (1,1);
\draw [fill=black,line width=1.5pt] (0.25,0.5) circle (3pt);
\draw [fill=black,line width=1.5pt] (0.5,0.75) circle (3pt);
\draw [fill=black,line width=1.5pt] (0.75,0.5) circle (3pt);
\draw [fill=black,line width=1.5pt] (0.5,0.25) circle (3pt);
\draw [fill=black,line width=1.5pt] (0.25,0.25) circle (3pt);
\draw [fill=black,line width=1.5pt] (0.75,0.25) circle (3pt);
\draw [line width=1.5pt] (0.25,0.5)--(0.5,0.75);
\draw [line width=1.5pt] (0.5,0.75)--(0.75,0.5);
\draw [line width=1.5pt] (0.75,0.5)--(0.5,0.25);
\draw [line width=1.5pt] (0.5,0.25)--(0.25,0.5);
\draw [line width=1.5pt] (0.5,0.75)--(0.5,1.0);
\draw [line width=1.5pt] (0.5,0.0)--(0.5,0.25);
\draw [line width=1.5pt] (0.25,0.25)--(0.0,0.25);
\draw [line width=1.5pt] (1.0,0.25)--(0.75,0.25);
\draw [line width=1.5pt] (0.25,0.25)--(0.25,1.0);
\draw [line width=1.5pt] (0.25,0.0)--(0.25,0.5);
\draw [line width=1.5pt] (0.75,0.5)--(0.75,0.25);
\draw [line width=1.5pt] (0.75,0.5)--(0.75,1.0);
\draw [line width=1.5pt] (0.75,0.0)--(0.75,0.25);

\draw (0.5,-0.15) node {$G_{5}^6$};

\end{tikzpicture}
\\
\begin{tikzpicture}[x=2.0cm,y=2.0cm]
\draw[dash pattern=on 2pt off 2pt] (0,0) rectangle (1,1);
\draw [fill=black,line width=1.5pt] (0.0,0.5) circle (3pt);
\draw [fill=black,line width=1.5pt] (1.0,0.5) circle (3pt);
\draw [fill=black,line width=1.5pt] (0.2,0.7) circle (3pt);
\draw [fill=black,line width=1.5pt] (0.4,0.5) circle (3pt);
\draw [fill=black,line width=1.5pt] (0.2,0.3) circle (3pt);
\draw [fill=black,line width=1.5pt] (0.8,0.3) circle (3pt);
\draw [fill=black,line width=1.5pt] (0.6,0.5) circle (3pt);
\draw [fill=black,line width=1.5pt] (0.8,0.7) circle (3pt);
\draw [line width=1.5pt] (0.0,0.5)--(0.2,0.7);
\draw [line width=1.5pt] (0.2,0.7)--(0.4,0.5);
\draw [line width=1.5pt] (0.4,0.5)--(0.2,0.3);
\draw [line width=1.5pt] (0.2,0.3)--(0.0,0.5);
\draw [line width=1.5pt] (0.2,0.7)--(0.2,1.0);
\draw [line width=1.5pt] (0.2,0.0)--(0.2,0.3);
\draw [line width=1.5pt] (0.0,0.5)--(0.0,0.5);
\draw [line width=1.5pt] (1.0,0.5)--(0.80000000000000004,0.3);
\draw [line width=1.5pt] (0.4,0.5)--(0.6,0.5);
\draw [line width=1.5pt] (0.4,0.5)--(0.5,1.0);
\draw [line width=1.5pt] (0.5,0.0)--(0.6,0.5);
\draw [line width=1.5pt] (0.8,0.3)--(0.6,0.5);
\draw [line width=1.5pt] (0.8,0.7)--(0.8,1.0);
\draw [line width=1.5pt] (0.8,0.0)--(0.8,0.3);
\draw [line width=1.5pt] (0.8,0.7)--(0.6,0.5);
\draw [line width=1.5pt] (0.8,0.7)--(1.0,0.5);

\draw (0.5,-0.15) node {$G_{1}^7$};

\end{tikzpicture}
&
\begin{tikzpicture}[x=2.0cm,y=2.0cm]
\draw[dash pattern=on 2pt off 2pt] (0,0) rectangle (1,1);
\draw [fill=black,line width=1.5pt] (0.1,0.5) circle (3pt);
\draw [fill=black,line width=1.5pt] (0.25,0.7) circle (3pt);
\draw [fill=black,line width=1.5pt] (0.4,0.5) circle (3pt);
\draw [fill=black,line width=1.5pt] (0.25,0.3) circle (3pt);
\draw [fill=black,line width=1.5pt] (0.75,0.3) circle (3pt);
\draw [fill=black,line width=1.5pt] (0.9,0.5) circle (3pt);
\draw [fill=black,line width=1.5pt] (0.75,0.7) circle (3pt);
\draw [fill=black,line width=1.5pt] (0.6,0.5) circle (3pt);
\draw [line width=1.5pt] (0.1,0.5)--(0.25,0.7);
\draw [line width=1.5pt] (0.25,0.7)--(0.4,0.5);
\draw [line width=1.5pt] (0.25,0.3)--(0.1,0.5);
\draw [line width=1.5pt] (0.25,0.7)--(0.25,1.0);
\draw [line width=1.5pt] (0.25,0.0)--(0.25,0.3);
\draw [line width=1.5pt] (0.4,0.5)--(0.25,0.3);
\draw [line width=1.5pt] (0.75,0.3)--(0.9,0.5);
\draw [line width=1.5pt] (0.75,0.7)--(0.75,1.0);
\draw [line width=1.5pt] (0.75,0.0)--(0.75,0.3);
\draw [line width=1.5pt] (0.75,0.7)--(0.9,0.5);
\draw [line width=1.5pt] (0.9,0.5)--(1.0,0.5);
\draw [line width=1.5pt] (0.0,0.5)--(0.1,0.5);
\draw [line width=1.5pt] (0.6,0.5)--(0.4,0.5);
\draw [line width=1.5pt] (0.6,0.5)--(0.75,0.3);
\draw [line width=1.5pt] (0.75,0.7)--(0.6,0.5);
\draw [line width=1.5pt] (0.4,0.5)--(0.5,1.0);
\draw [line width=1.5pt] (0.5,0.0)--(0.6,0.5);
\draw [line width=1.5pt] (0.9,0.5)--(1.0,1.0);
\draw [line width=1.5pt] (1.0,0.0)--(1.0,0.0);
\draw [line width=1.5pt] (0.0,0.0)--(0.1,0.5);

\draw (0.5,-0.15) node {$G_{1}^8$};

\end{tikzpicture}
&
\end{tabular}

\caption{The irreducibles that have no vertex of degree 2.}
    \label{fig_no_deg_two}
\end{figure}
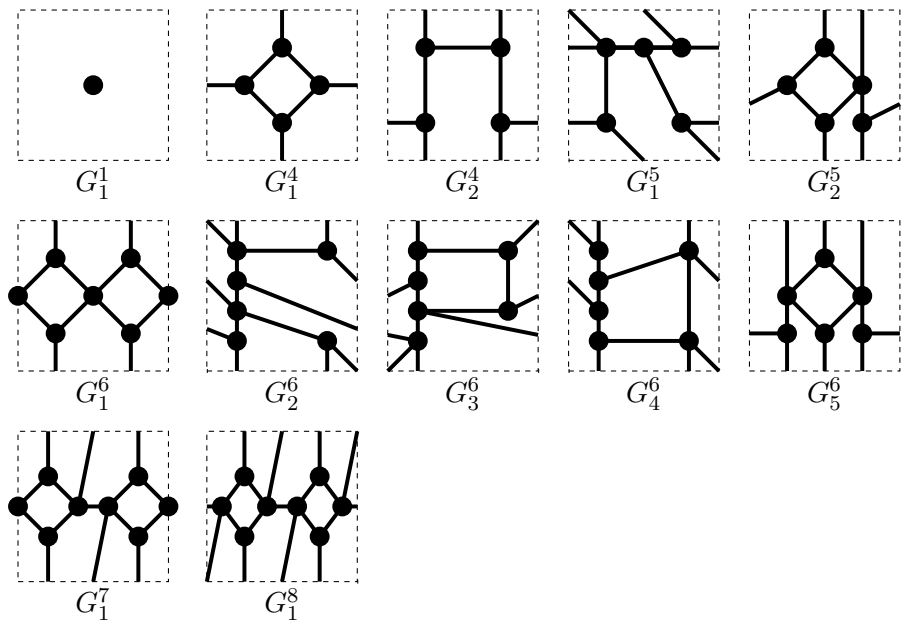

\newcommand{\bwt}{4.2cm}
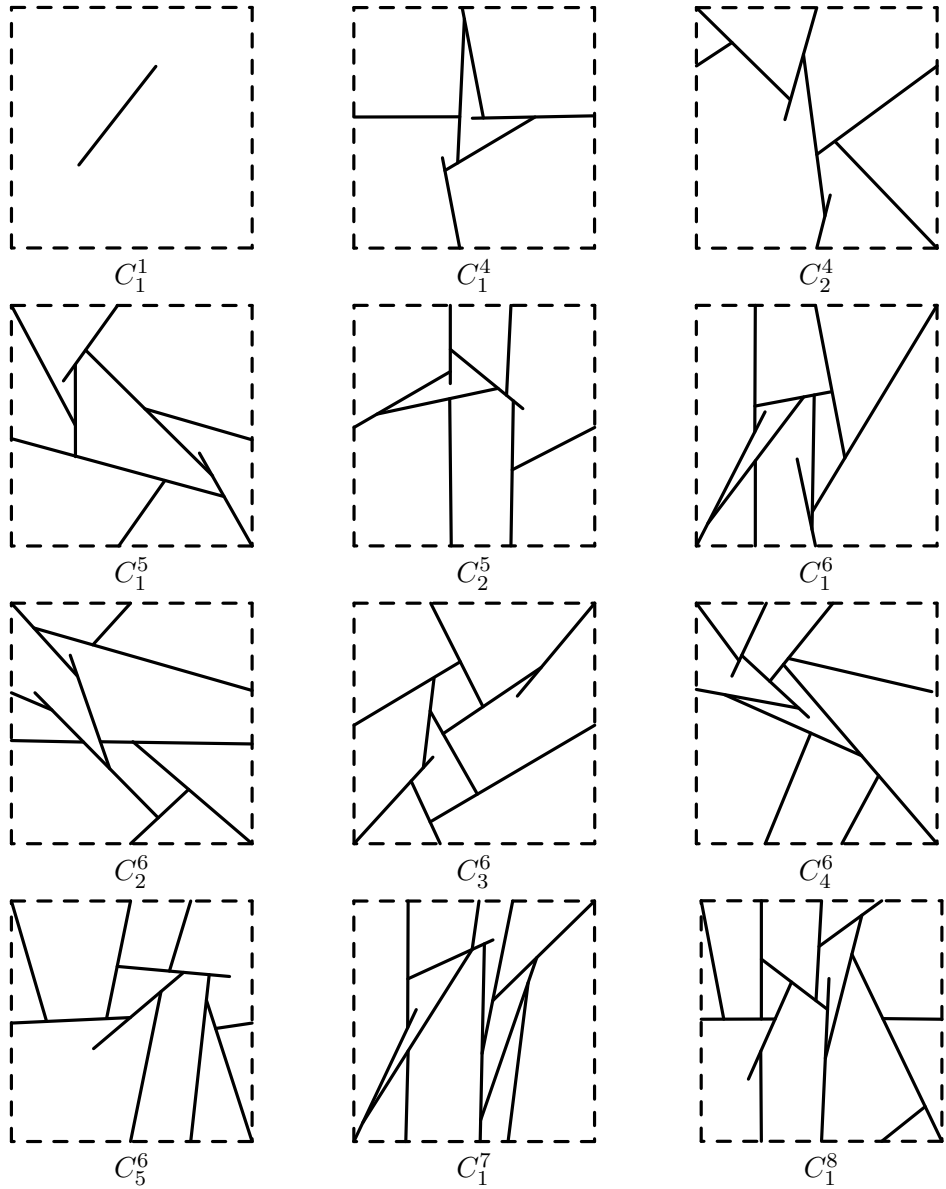
\begin{figure}
    \centering
    \begin{tabular}{ccc}
\begin{minipage}{\bwt}
    \centering
\begin{tikzpicture}[line cap=round,line join=round,>=triangle 45,x=0.8cm,y=0.8cm]
\draw [line width=1.2pt,dash pattern=on 5pt off 5pt] (-4,6)-- (0,6);
\draw [line width=1.2pt,dash pattern=on 5pt off 5pt] (0,6)-- (0,2);
\draw [line width=1.2pt,dash pattern=on 5pt off 5pt] (0,2)-- (-4,2);
\draw [line width=1.2pt,dash pattern=on 5pt off 5pt] (-4,2)-- (-4,6);
\draw [line width=1.2pt] (-1.6,5.02)-- (-2.88,3.38);
\end{tikzpicture}\\
$C^1_1$
\end{minipage}
&
\begin{minipage}{\bwt}
    \centering
\begin{tikzpicture}[line cap=round,line join=round,>=triangle 45,x=0.8cm,y=0.8cm]
\draw [line width=1.2pt,dash pattern=on 5pt off 5pt] (-4.2,6.22)-- (-0.2,6.22);
\draw [line width=1.2pt,dash pattern=on 5pt off 5pt] (-0.2,6.22)-- (-0.2,2.22);
\draw [line width=1.2pt,dash pattern=on 5pt off 5pt] (-0.2,2.22)-- (-4.2,2.22);
\draw [line width=1.2pt,dash pattern=on 5pt off 5pt] (-4.2,2.22)-- (-4.2,6.22);
\draw [line width=1.2pt] (-0.2,4.42)-- (-2.2309580225002827,4.379657659768442);
\draw [line width=1.2pt] (-2.44,2.22)-- (-2.7276695229750523,3.722245379728306);
\draw [line width=1.2pt] (-1.1882140953316445,4.40037041198596)-- (-2.6869531064288505,3.509619280157343);
\draw [line width=1.2pt] (-2.4,6.22)-- (-2.0464334461235394,4.38332300053807);
\draw [line width=1.2pt] (-2.4749220115093644,3.6356365095641667)-- (-2.3633609788729055,6.029670777257831);
\draw [line width=1.2pt] (-4.2,4.4)-- (-2.4391321828161234,4.403665222651778);
\end{tikzpicture}
\\
$C^4_1$
\end{minipage}
&

\begin{minipage}{\bwt}
    \centering
\begin{tikzpicture}[line cap=round,line join=round,>=triangle 45,x=0.8cm,y=0.8cm]
\draw [line width=1.2pt,dash pattern=on 5pt off 5pt] (-4,3.97)-- (0,3.97);
\draw [line width=1.2pt,dash pattern=on 5pt off 5pt] (0,3.97)-- (0,-0.03);
\draw [line width=1.2pt,dash pattern=on 5pt off 5pt] (0,-0.03)-- (-4,-0.03);
\draw [line width=1.2pt,dash pattern=on 5pt off 5pt] (-4,-0.03)-- (-4,3.97);
\draw [line width=1.2pt] (-2,3.97)-- (-2.5291466124506976,2.1080250049279345);
\draw [line width=1.2pt] (-2,-0.03)-- (-1.7760191668863112,0.852812595653957);
\draw [line width=1.2pt] (0,-0.03)-- (-1.6923383396013794,1.7398293648742345);
\draw [line width=1.2pt] (-4,3.97)-- (-2.4454657851657657,2.442748314067662);
\draw [line width=1.2pt] (-3.408437643747404,3.388819735648454)-- (-4,3);
\draw [line width=1.2pt] (-2.2209663594421816,3.192457771905734)-- (-1.863904059516607,0.5064173746738677);
\draw [line width=1.2pt] (0,3)-- (-1.999655629985765,1.5276234844867227);
\end{tikzpicture}
\\
$C^4_2$
\end{minipage}
\medskip
\\

\begin{minipage}{\bwt}
    \centering
\begin{tikzpicture}[line cap=round,line join=round,>=triangle 45,x=0.8cm,y=0.8cm]
\draw [line width=1.2pt,dash pattern=on 5pt off 5pt] (-4.244372093023259,6.234883720930241)-- (-0.24437209302325913,6.234883720930241);
\draw [line width=1.2pt,dash pattern=on 5pt off 5pt] (-0.24437209302325913,6.234883720930241)-- (-0.24437209302325913,2.234883720930241);
\draw [line width=1.2pt,dash pattern=on 5pt off 5pt] (-0.24437209302325913,2.234883720930241)-- (-4.244372093023259,2.234883720930241);
\draw [line width=1.2pt,dash pattern=on 5pt off 5pt] (-4.244372093023259,2.234883720930241)-- (-4.244372093023259,6.234883720930241);
\draw [line width=1.2pt] (-3.1818394100778367,5.247006964358869)-- (-3.18,3.72);
\draw [line width=1.2pt] (-3.0111429741909053,5.4920065546907)-- (-0.9,3.4);
\draw [line width=1.2pt] (-3.1806254115484194,4.2391924310591165)-- (-4.244372093023259,6.234883720930241);
\draw [line width=1.2pt] (-0.24437209302325913,4)-- (-2.0253503106210844,4.5151495919146765);
\draw [line width=1.2pt] (-1.12,3.78)-- (-0.2443720930232591,2.2348837209302412);
\draw [line width=1.2pt] (-3.38,4.98)-- (-2.48,6.234883720930241);
\draw [line width=1.2pt] (-4.244372093023259,4.02)-- (-0.7102056256651272,3.0568846746305933);
\draw [line width=1.2pt] (-2.46,2.234883720930241)-- (-1.6932473804732147,3.3247788409858767);
\end{tikzpicture}
\\
$C^5_1$
\end{minipage}
&

\begin{minipage}{\bwt}
    \centering
\begin{tikzpicture}[line cap=round,line join=round,>=triangle 45,x=0.8cm,y=0.8cm]
\draw [line width=1.2pt,dash pattern=on 5pt off 5pt] (-3.98,6.03)-- (0.02,6.03);
\draw [line width=1.2pt,dash pattern=on 5pt off 5pt] (0.02,6.03)-- (0.02,2.03);
\draw [line width=1.2pt,dash pattern=on 5pt off 5pt] (0.02,2.03)-- (-3.98,2.03);
\draw [line width=1.2pt,dash pattern=on 5pt off 5pt] (-3.98,2.03)-- (-3.98,6.03);
\draw [line width=1.2pt] (-2.3800215598217402,6.03)-- (-2.3800215598217402,4.72920880407138);
\draw [line width=1.2pt] (-2.3800215598217402,5.298645315433246)-- (-1.1742148760330577,4.312396694214869);
\draw [line width=1.2pt] (-1.372561983471074,6.03)-- (-1.4456157454197966,4.534379817072604);
\draw [line width=1.2pt] (-3.98,4)-- (-2.3800215598217402,4.9286026732663775);
\draw [line width=1.2pt] (-1.3329295405905508,4.442211963272042)-- (-1.3725619834710743,2.03);
\draw [line width=1.2pt] (-1.3517818222490676,3.294775770939919)-- (0.02,4);
\draw [line width=1.2pt] (-1.586792499894012,4.649850548260457)-- (-3.598375907707616,4.221488704714096);
\draw [line width=1.2pt] (-2.3901141294819195,4.478785141340563)-- (-2.3642975206611565,2.03);
\end{tikzpicture}
\\
$C^5_2$
\end{minipage}

&
\begin{minipage}{\bwt}
\centering
\begin{tikzpicture}[line cap=round,line join=round,>=triangle 45,x=0.8cm,y=0.8cm]
\draw [line width=1.2pt,dash pattern=on 5pt off 5pt] (-3.98,6.03)-- (0.02,6.03);
\draw [line width=1.2pt,dash pattern=on 5pt off 5pt] (0.02,6.03)-- (0.02,2.03);
\draw [line width=1.2pt,dash pattern=on 5pt off 5pt] (0.02,2.03)-- (-3.98,2.03);
\draw [line width=1.2pt,dash pattern=on 5pt off 5pt] (-3.98,2.03)-- (-3.98,6.03);
\draw [line width=1.2pt] (-3.98,2.03)-- (-2.837874726823409,4.258763422709005);
\draw [line width=1.2pt] (-3,6.03)-- (-3.0086672611987977,3.9254758557252893);
\draw [line width=1.2pt] (-3.006899580370139,4.354691832716991)-- (-1.7395857329269875,4.592638688466817);
\draw [line width=1.2pt] (-2,2.03)-- (-2.304685364040811,3.4744628226458474);
\draw [line width=1.2pt] (-2.021186376493138,4.539766238718967)-- (-2.057626318874564,2.303196828742628);
\draw [line width=1.2pt] (0.02,6.03)-- (-2.0528581416007263,2.595852599720083);
\draw [line width=1.2pt] (-2.195442553364651,4.507048447885057)-- (-3.789591819076609,2.4015658859787776);
\draw [line width=1.2pt] (-3,2.03)-- (-2.9993229094424865,3.4453184656396454);
\draw [line width=1.2pt] (-2,6.03)-- (-1.5118753895977786,3.4921100300295858);
\end{tikzpicture}
\\
$C^6_1$
\end{minipage}
\medskip
\\

\begin{minipage}{\bwt}
    \centering
\begin{tikzpicture}[line cap=round,line join=round,>=triangle 45,x=0.8cm,y=0.8cm]
\draw [line width=1.2pt,dash pattern=on 5pt off 5pt] (-3.98,6.03)-- (0.02,6.03);
\draw [line width=1.2pt,dash pattern=on 5pt off 5pt] (0.02,6.03)-- (0.02,2.03);
\draw [line width=1.2pt,dash pattern=on 5pt off 5pt] (0.02,2.03)-- (-3.98,2.03);
\draw [line width=1.2pt,dash pattern=on 5pt off 5pt] (-3.98,2.03)-- (-3.98,6.03);
\draw [line width=1.2pt] (-3.98,6.03)-- (-2.888501051580805,4.8238757679010265);
\draw [line width=1.2pt] (-3.6074659801104976,5.618343800678216)-- (0.02,4.577944274657233);
\draw [line width=1.2pt] (-2.6263750100232537,5.336955474541208)-- (-2,6.03);
\draw [line width=1.2pt] (0.02,2.03)-- (-1.9615285001234326,3.726642951890258);
\draw [line width=1.2pt] (-1.034289704168265,2.932713837179797)-- (-2,2.03);
\draw [line width=1.2pt] (-1.5401932452782536,2.4598120478860253)-- (-3.588459916966984,4.540108660312034);
\draw [line width=1.2pt] (-3.98,4.540108660312034)-- (-3.3063287803965284,4.253565674586682);
\draw [line width=1.2pt] (-3.0020078946164017,5.1643962970078165)-- (-2.3457126660969507,3.277927845762539);
\draw [line width=1.2pt] (-2.501500259587641,3.725726896318796)-- (0.02,3.688807337545059);
\draw [line width=1.2pt] (-2.781208142397646,3.720233418858767)-- (-3.98,3.7455607590628577);
\end{tikzpicture}
\\
$C^6_2$
\end{minipage}
&

\begin{minipage}{\bwt}
    \centering
\begin{tikzpicture}[line cap=round,line join=round,>=triangle 45,x=0.8cm,y=0.8cm]
\draw [line width=1.2pt,dash pattern=on 5pt off 5pt] (-3.98,6.03)-- (0.02,6.03);
\draw [line width=1.2pt,dash pattern=on 5pt off 5pt] (0.02,6.03)-- (0.02,2.03);
\draw [line width=1.2pt,dash pattern=on 5pt off 5pt] (0.02,2.03)-- (-3.98,2.03);
\draw [line width=1.2pt,dash pattern=on 5pt off 5pt] (-3.98,2.03)-- (-3.98,6.03);
\draw [line width=1.2pt] (0.02,6.03)-- (-1.2662655152627962,4.4845808224844355);
\draw [line width=1.2pt] (-3.98,2.03)-- (-2.6689517056213883,3.4694789741986107);
\draw [line width=1.2pt] (-3.0308787663332195,3.0720974312592535)-- (-2.5479805224740173,2.03);
\draw [line width=1.2pt] (-2.7156267067282918,2.391781514415142)-- (0.02,4);
\draw [line width=1.2pt] (-2.8282141710466115,3.2946151104175283)-- (-2.6504953083798277,4.779883178349403);
\draw [line width=1.2pt] (-2.7152936350996746,4.238337480339524)-- (-1.9300046049997746,2.8536325350885257);
\draw [line width=1.2pt] (-0.8702260193578789,4.960413397300848)-- (-2.500393937933161,3.8594035311901798);
\draw [line width=1.2pt] (-2.7058645001045094,6.03)-- (-1.8363832368724728,4.307874099614703);
\draw [line width=1.2pt] (-2.2141230965002117,5.056039268182842)-- (-3.98,4);
\end{tikzpicture}
\\
$C^6_3$
\end{minipage}

&

\begin{minipage}{\bwt}
    \centering
\begin{tikzpicture}[line cap=round,line join=round,>=triangle 45,x=0.8cm,y=0.8cm]
\draw [line width=1.2pt,dash pattern=on 5pt off 5pt] (-3.98,6.03)-- (0.02,6.03);
\draw [line width=1.2pt,dash pattern=on 5pt off 5pt] (0.02,6.03)-- (0.02,2.03);
\draw [line width=1.2pt,dash pattern=on 5pt off 5pt] (0.02,2.03)-- (-3.98,2.03);
\draw [line width=1.2pt,dash pattern=on 5pt off 5pt] (-3.98,2.03)-- (-3.98,6.03);
\draw [line width=1.2pt] (-2.816602883553872,6.03)-- (-3.3887511980422445,4.816795972832524);
\draw [line width=1.2pt] (-3.2261947866767056,5.161486480374992)-- (-2.115259788374576,4.133909274894788);
\draw [line width=1.2pt] (-2.2768874531759504,4.283409384603024)-- (-3.98,4.595319205933799);
\draw [line width=1.2pt] (-2.759789236556856,4.7300771607072445)-- (-1.709219049060247,6.03);
\draw [line width=1.2pt] (-2.5335527249038603,5.010010869163635)-- (0.02,2.03);
\draw [line width=1.2pt] (-2.4513669403253933,5.111703435457867)-- (-0.06659969456136963,4.558406411450678);
\draw [line width=1.2pt] (-3.5161181886915367,4.510363403597873)-- (-1.2332190099494376,3.4925138673149654);
\draw [line width=1.2pt] (-2.0777590143960176,3.8690590340498243)-- (-2.835059280795432,2.03);
\draw [line width=1.2pt] (-1.5615678711277634,2.03)-- (-0.9505707122266673,3.1626616613472325);
\draw [line width=1.2pt] (-3.98,6.03)-- (-3.2689173133314533,5.070896086108676);
\end{tikzpicture}
\\
$C^6_4$
\end{minipage}
\medskip
\\

\begin{minipage}{\bwt}
    \centering
\begin{tikzpicture}[line cap=round,line join=round,>=triangle 45,x=0.8cm,y=0.8cm]
\draw [line width=1.2pt,dash pattern=on 5pt off 5pt] (-3.98,6.03)-- (0.02,6.03);
\draw [line width=1.2pt,dash pattern=on 5pt off 5pt] (0.02,6.03)-- (0.02,2.03);
\draw [line width=1.2pt,dash pattern=on 5pt off 5pt] (0.02,2.03)-- (-3.98,2.03);
\draw [line width=1.2pt,dash pattern=on 5pt off 5pt] (-3.98,2.03)-- (-3.98,6.03);
\draw [line width=1.2pt] (-2,6.03)-- (-2.3945454545454568,4.094545454545453);
\draw [line width=1.2pt] (-2.221701687411174,4.942435731570535)-- (-0.3545454545454591,4.774545454545453);
\draw [line width=1.2pt] (-1.1173696462329852,4.843136803576428)-- (-2.6145454545454565,3.5745454545454534);
\draw [line width=1.2pt] (-1.4939246023683814,4.524073165801963)-- (-2,2.03);
\draw [line width=1.2pt] (-0.6927416908237087,4.804955261072703)-- (-1,2.03);
\draw [line width=1.2pt] (-1,6.03)-- (-1.3536608363255913,4.8643835491164875);
\draw [line width=1.2pt] (-2.004488589943014,4.091460606763902)-- (-3.98,4);
\draw [line width=1.2pt] (-3.3952115042403603,4.0270740580785525)-- (-3.98,6.03);
\draw [line width=1.2pt] (0.02,2.03)-- (-0.7395395709377851,4.382307541706726);
\draw [line width=1.2pt] (0.02,4)-- (-0.5870022473063158,3.909896741138369);
\end{tikzpicture}
\\
$C^6_5$
\end{minipage}

&

\begin{minipage}{\bwt}
    \centering
\begin{tikzpicture}[line cap=round,line join=round,>=triangle 45,x=0.8cm,y=0.8cm]
\draw [line width=1.2pt,dash pattern=on 5pt off 5pt] (-4.2,6.22)-- (-0.2,6.22);
\draw [line width=1.2pt,dash pattern=on 5pt off 5pt] (-0.2,6.22)-- (-0.2,2.22);
\draw [line width=1.2pt,dash pattern=on 5pt off 5pt] (-0.2,2.22)-- (-4.2,2.22);
\draw [line width=1.2pt,dash pattern=on 5pt off 5pt] (-4.2,2.22)-- (-4.2,6.22);
\draw [line width=1.2pt] (-3.1627869996024573,4.414563173341968)-- (-4.2,2.22);
\draw [line width=1.2pt] (-3.3056279724898294,4.112336399648602)-- (-3.297980240478875,6.22);
\draw [line width=1.2pt] (-3.3026782263687613,4.925266627949985)-- (-1.8881078713390935,5.5733623808541175);
\draw [line width=1.2pt] (-2.2339424557773606,5.414915736906807)-- (-2.1198677128415233,6.22);
\draw [line width=1.2pt] (-2.0304357352247737,5.5081538291718)-- (-2.100554392716321,2.22);
\draw [line width=1.2pt] (-2.2339424557773606,5.414915736906807)-- (-4.053095477513528,2.530824541268411);
\draw [line width=1.2pt] (-3.2973493631710387,3.7289869214913196)-- (-3.33660688072928,2.22);
\draw [line width=1.2pt] (-1.559781429210651,6.22)-- (-2.069356792208532,3.6829845071267187);
\draw [line width=1.2pt] (-0.2,6.22)-- (-1.8928991601196206,4.561511568975985);
\draw [line width=1.2pt] (-1.1600483149266816,5.279466115266136)-- (-2.09295543442101,2.5763465803585226);
\draw [line width=1.2pt] (-1.3033205749321213,4.864331492817346)-- (-1.6370347097114613,2.22);
\end{tikzpicture}
\\
$C^7_1$
\end{minipage}

&

\begin{minipage}{\bwt}
    \centering
\begin{tikzpicture}[line cap=round,line join=round,>=triangle 45,x=0.8cm,y=0.8cm]
\draw [line width=1.2pt,dash pattern=on 5pt off 5pt] (-4,3.97)-- (0,3.97);
\draw [line width=1.2pt,dash pattern=on 5pt off 5pt] (0,3.97)-- (0,-0.03);
\draw [line width=1.2pt,dash pattern=on 5pt off 5pt] (0,-0.03)-- (-4,-0.03);
\draw [line width=1.2pt,dash pattern=on 5pt off 5pt] (-4,-0.03)-- (-4,3.97);
\draw [line width=1.2pt] (-2,-0.03)-- (-1.8764361596282293,2.677054630465474);
\draw [line width=1.2pt] (-1.8999377877888497,2.1621775341865876)-- (-3,3);
\draw [line width=1.2pt] (-2.498590374979651,2.6181195538271127)-- (-3.2153293961871388,1.0034380847668356);
\draw [line width=1.2pt] (-2.091923112801844,2.3083961903833723)-- (-2,3.97);
\draw [line width=1.2pt] (-1,3.97)-- (-2.042041070917226,3.210064859998687);
\draw [line width=1.2pt] (-1.937041115753094,1.3493124154828662)-- (-1.3297523423184434,3.729519658658989);
\draw [line width=1.2pt] (-1.495442620069284,3.080113286471454)-- (0,-0.03);
\draw [line width=1.2pt] (-2.771334329770391,2.003677481373892)-- (-4,2);
\draw [line width=1.2pt] (0,2)-- (-0.9812454950535229,2.0107233353526004);
\draw [line width=1.2pt] (-3.62427350561736,2.0011245753977254)-- (-4,3.97);
\draw [line width=1.2pt] (-3.0000089583829785,2.0029930423863136)-- (-3,3.97);
\draw [line width=1.2pt] (-3.0106609176021197,1.4645186034683868)-- (-3,-0.03);
\draw [line width=1.2pt] (-1,-0.03)-- (-0.27752969393738103,0.5471861634283335);
\end{tikzpicture}
\\
$C^8_1$
\end{minipage}

\end{tabular}
    \caption{CCA representations of the 12 irreducible torus graphs 
    with no vertices of degree two: \( C^i_j \) is a CCA representation of 
\( G^i_j \) from Figure \ref{fig_no_deg_two}.}
    \label{fig_CCArepsnodegree2}
\end{figure}

Finally we observe that allowing divalent vertex additions we have the following inductive 
construction for \( (2,2) \)-tight torus graphs.

\begin{thm}
    If \( G \) is a \( (2,2) \)-tight torus graph then \( G \) can be constructed
    from one of the torus graphs in Figure \ref{fig_no_deg_two} by a sequence 
    of moves each of which is either a digon split, triangle split, 
    quadrilateral split or a divalent vertex addition. \qed
\end{thm}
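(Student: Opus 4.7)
The plan is to proceed by strong induction on $|V(G)|$. The base case is $|V(G)|=1$: the only $(2,2)$-tight torus graph on a single vertex is the embedding $G^1_1$ of Figure \ref{fig_no_deg_two}, which is already in the target list, so no moves are required.

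For the inductive step, split into cases according to whether $G$ is irreducible. If $G$ is not irreducible, then by definition $G$ contains a digon, a triangle, or a quadrilateral face both of whose contractions preserve $(2,2)$-sparsity. Applying Lemma \ref{lem_digon_contractible} or Lemma \ref{lem_tri_contractible}, or the quadrilateral contraction move, produces a $(2,2)$-tight torus graph $G'$ with $|V(G')| = |V(G)| - 1$. By the inductive hypothesis, $G'$ can be built from one of the torus graphs in Figure \ref{fig_no_deg_two} by a sequence of allowed moves, and prepending the corresponding split to this sequence gives a construction of $G$.

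If $G$ is irreducible, invoke Theorem \ref{thm_116}: $G$ lies among the 116 classified irreducibles. In the subcase that $G$ has no vertex of degree 2, inspection of the list (as summarised in Figure \ref{fig_no_deg_two}) shows that $G$ is one of the 12 distinguished graphs, and we are done. Otherwise, pick any divalent vertex $v$ of $G$ and let $G' = G - v$, where we delete $v$ together with its two incident edges and inherit the embedding from $\varphi$. Then $\gamma(G') = 2(|V(G)|-1) - (|E(G)|-2) = \gamma(G) = 2$, and every subgraph of $G'$ is a subgraph of $G$ and so has $\gamma \geq 2$; hence $G'$ is a $(2,2)$-tight torus graph with strictly fewer vertices. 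By the inductive hypothesis $G'$ has an admissible construction, and appending a divalent vertex addition that re-inserts $v$ in the appropriate face (the union of the one or two local face-sides at $v$ in $G$) with its two original incident edges produces $G$.

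The only nontrivial input is the identification in the second subcase, namely that precisely the twelve graphs depicted in Figure \ref{fig_no_deg_two} have no divalent vertex. This step is not a difficulty in the proof itself but rests on the computer-assisted enumeration underlying Theorem \ref{thm_116} and described in Section \ref{sec_search}; once this list is accepted, the induction above is routine.
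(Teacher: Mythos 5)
Your proof is correct, and it uses the same essential ingredients as the paper: the split-reduction to irreducibles from Section \ref{sec_irredsurface} and the computational classification behind Theorem \ref{thm_116}. The packaging differs slightly. The paper argues in two stages: every $(2,2)$-tight torus graph reduces by digon/triangle/quadrilateral contractions to an irreducible, and then one observes from the enumeration that each of the 116 irreducibles is reachable from the twelve graphs of Figure \ref{fig_no_deg_two} by topological Henneberg moves alone (at most five). Your single strong induction interleaves the two kinds of steps, and consequently needs only the weaker computational fact that every irreducible outside Figure \ref{fig_no_deg_two} has a divalent vertex, rather than the explicit Henneberg chains within the family of irreducibles; after deleting a divalent vertex you simply recurse on the resulting tight graph, which need not be irreducible. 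That is a tidy economy, though both versions ultimately rest on the same computer-assisted classification. One cosmetic fix: the negation of irreducibility gives a digon, a triangle, or a quadrilateral face for which \emph{at least one} of the two contractions is $(2,2)$-sparse, not necessarily both; your argument only uses the existence of one sparse contraction, so nothing breaks, but the sentence should be reworded.
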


\clearpage

\begin{bibdiv}
    \begin{biblist}

    \bib{MR3677547}{article}{
        author={Alam, Md. Jawaherul},
        author={Eppstein, David},
        author={Kaufmann, Michael},
        author={Kobourov, Stephen G.},
        author={Pupyrev, Sergey},
        author={Schulz, Andr\'e},
        author={Ueckerdt, Torsten},
        title={Contact graphs of circular arcs},
        conference={
        title={Algorithms and data structures},
        },
        book={
        series={Lecture Notes in Comput. Sci.},
        volume={9214},
        publisher={Springer, Cham},
        },
        date={2015},
        pages={1--13},
        review={\MR{3677547}},
    }

    \bib{MR1021367}{article}{
        author={Barnette, D. W.},
        author={Edelson, Allan L.},
        title={All $2$-manifolds have finitely many minimal triangulations},
        journal={Israel J. Math.},
        volume={67},
        date={1989},
        number={1},
        pages={123--128},
        issn={0021-2172},
        review={\MR{1021367}},
        doi={10.1007/BF02764905},
    }

    \bib{MR3575219}{article}{
        author={Cruickshank, James},
        author={Kitson, Derek},
        author={Power, Stephen C.},
        title={The generic rigidity of triangulated spheres with blocks and
        holes},
        journal={J. Combin. Theory Ser. B},
        volume={122},
        date={2017},
        pages={550--577},
        issn={0095-8956},
        review={\MR{3575219}},
    }

    \bib{CKP2}{article}{
       author={Cruickshank, J.},
       author={Kitson, D.},
       author={Power, S. C.},
       title={The rigidity of a partially triangulated torus},
       journal={Proc. Lond. Math. Soc. (3)},
       volume={118},
       date={2019},
       number={5},
       pages={1277--1304},
       issn={0024-6115},
       review={\MR{3946722}},
       doi={10.1112/plms.12215},
    }

    \bib{irreducibles}{software}{
        author={Cruickshank, James},
        author={Shakir, Qays},
        title={Software for computing irreducible $(2,2)$-tight torus graphs},
        date={2021},
        doi={10.5281/zenodo.4579832},
        url={http://doi.org/10.5281/zenodo.4579832}
    }


    \bib{MR2850125}{book}{
        author={Farb, Benson},
        author={Margalit, Dan},
        title={A primer on mapping class groups},
        series={Princeton Mathematical Series},
        volume={49},
        publisher={Princeton University Press, Princeton, NJ},
        date={2012},
        pages={xiv+472},
        isbn={978-0-691-14794-9},
        review={\MR{2850125}},
    }

    \bib{MR2165974}{article}{
        author={Fekete, Zsolt},
        author={Jord\'an, Tibor},
        author={Whiteley, Walter},
        title={An inductive construction for plane Laman graphs via vertex
        splitting},
        conference={
          title={Algorithms---ESA 2004},
        },
        book={
          series={Lecture Notes in Comput. Sci.},
          volume={3221},
          publisher={Springer, Berlin},
        },
        date={2004},
        pages={299--310},
        review={\MR{2165974}},
    }

    \bib{MR2304990}{article}{
       author={de Fraysseix, Hubert},
       author={Ossona de Mendez, Patrice},
       title={Representations by contact and intersection of segments},
       journal={Algorithmica},
       volume={47},
       date={2007},
       number={4},
       pages={453--463},
       issn={0178-4617},
       review={\MR{2304990}},
       doi={10.1007/s00453-006-0157-x},
    }

    \bib{MR3775800}{article}{
        author={Gon\c{c}alves, Daniel},
        author={Isenmann, Lucas},
        author={Pennarun, Claire},
        title={Planar graphs as $\llcorner$-intersection or $\llcorner$-contact
        graphs},
        conference={
          title={Proceedings of the Twenty-Ninth Annual ACM-SIAM Symposium on
          Discrete Algorithms},
        },
        book={
          publisher={SIAM, Philadelphia, PA},
        },
        date={2018},
        pages={172--184},
        review={\MR{3775800}},
        doi={10.1137/1.9781611975031.12},
    }


    \bib{MR1481894}{article}{
        author={Jacobs, Donald J.},
        author={Hendrickson, Bruce},
        title={An algorithm for two-dimensional rigidity percolation: the pebble
        game},
        journal={J. Comput. Phys.},
        volume={137},
        date={1997},
        number={2},
        pages={346--365},
        issn={0021-9991},
        review={\MR{1481894}},
        doi={10.1006/jcph.1997.5809},
    }

    \bib{MR1644051}{article}{
       author={Hlin\v{e}n\'{y}, Petr},
       title={Classes and recognition of curve contact graphs},
       journal={J. Combin. Theory Ser. B},
       volume={74},
       date={1998},
       number={1},
       pages={87--103},
       issn={0095-8956},
       review={\MR{1644051}},
       doi={10.1006/jctb.1998.1846},
    }

    \bib{Koebe}{article}{
        author={P. {Koebe}},
        title={Kontaktprobleme der konformen Abbildung.},
        date={1936},
        language={German},
        journal={Ber. S\"achs. Akad. Wiss. Leipzig, Math.-phys.},
        pages={88:141-164},
    }

    \bib{MR2392060}{article}{
        author={Lee, Audrey},
        author={Streinu, Ileana},
        title={Pebble game algorithms and sparse graphs},
        journal={Discrete Math.},
        volume={308},
        date={2008},
        number={8},
        pages={1425--1437},
        issn={0012-365X},
        review={\MR{2392060}},
        doi={10.1016/j.disc.2007.07.104},
    }

    \bib{MR1844449}{book}{
        author={Mohar, Bojan},
        author={Thomassen, Carsten},
        title={Graphs on surfaces},
        series={Johns Hopkins Studies in the Mathematical Sciences},
        publisher={Johns Hopkins University Press, Baltimore, MD},
        date={2001},
        pages={xii+291},
        isbn={0-8018-6689-8},
        review={\MR{1844449}},
    }

    \bib{MR1348564}{article}{
        author={Nakamoto, Atsuhiro},
        author={Ota, Katsuhiro},
        title={Note on irreducible triangulations of surfaces},
        journal={J. Graph Theory},
        volume={20},
        date={1995},
        number={2},
        pages={227--233},
        issn={0364-9024},
        review={\MR{1348564}},
        doi={10.1002/jgt.3190200211},
    }

    \bib{MR1399674}{article}{
       author={Nakamoto, Atsuhiro},
       title={Irreducible quadrangulations of the torus},
       journal={J. Combin. Theory Ser. B},
       volume={67},
       date={1996},
       number={2},
       pages={183--201},
       issn={0095-8956},
       review={\MR{1399674}},
       doi={10.1006/jctb.1996.0040},
    }

\bib{MR3022162}{article}{
   author={Nixon, A.},
   author={Owen, J. C.},
   author={Power, S. C.},
   title={Rigidity of frameworks supported on surfaces},
   journal={SIAM J. Discrete Math.},
   volume={26},
   date={2012},
   number={4},
   pages={1733--1757},
   issn={0895-4801},
   review={\MR{3022162}},
   doi={10.1137/110848852},
}

    \bib{sagemath}{software}{
          author={Developers, The~Sage},
           title={{S}agemath, the {S}age {M}athematics {S}oftware {S}ystem
           ({V}ersion 8.1)},
            date={2017},
            url={https://www.sagemath.org},
    }

    \bib{QSthesis}{thesis}{
        author={Shakir, Qays},
        title={Sparse topological graphs},
        type={Ph.D. Thesis},
        organization={National University of Ireland Galway},
        date={2020}
    }

    \end{biblist}
\end{bibdiv}

\end{document}